\newtheorem{theo}{Theorem}[section]
\newtheorem{lem}[theo]{Lemma}
\newtheorem{prop}[theo]{Proposition}
\newtheorem{cor}[theo]{Corollary}
\newtheorem{defin}[theo]{Definition}
\newtheorem{notation}[theo]{Notation}
\newtheorem{alg}[theo]{Algorithm}
\theoremstyle{definition}
\newtheorem{rem}[theo]{Remark}
\newcommand{\Hom}{{\mathrm{Hom}}}
\newcommand{\PSL}{{\mathrm{PSL}}}
\newcommand{\SL}{{\mathrm{SL}}}
\newcommand{\PGL}{{\mathrm{PGL}}}
\newcommand{\GL}{{\mathrm{GL}}}
\newcommand{\Aut}{{\mathrm{Aut}}}
\newcommand{\Out}{{\mathrm{Out}}}
\newcommand{\Inn}{{\mathrm{Inn}}}
\newcommand{\ord}{{\mathrm{ord}}}
\newcommand{\sgn}{{\mathrm{sgn}}}
\newcommand{\tr}{{\mathrm{tr}}}
\newcommand{\Tr}{\mathcal{T}}
\newcommand{\Ga}{\Gamma}
\newcommand{\al}{\alpha}
\newcommand{\be}{\beta}
\newcommand{\ga}{\gamma}
\newcommand{\la}{\lambda}
\newcommand{\ZZ}{\mathbb{Z}}
\newcommand{\FQ}{\mathbb{F}_q}
\title{Beauville surfaces, moduli spaces and finite groups}
\author{Shelly Garion, Matteo Penegini}
\address{Shelly Garion\\  Max-Planck-Institute for Mathematics\\ D-53111 Bonn, Germany}
\email{shellyg@mpim-bonn.mpg.de}
\address{Matteo Penegini\\
Lehrstuhl Mathematik VIII\\ Universit\"at Bayreuth, NWII\\ D-95440
Bayreuth, Germany} \email{matteo.penegini@uni-bayreuth.de}
\subjclass[2000]{14J10,14J29,20D06,20H10,30F99.}
\begin{document}


\maketitle


\begin{abstract}
In this paper we give the asymptotic growth of the number of connected components 
of the moduli space of surfaces of general type corresponding to certain families of
Beauville surfaces with group either $\PSL(2,p)$, or an alternating group,
or a symmetric group or an abelian group.
We moreover extend these results to regular surfaces isogenous to a higher product
of curves.
\end{abstract}

\section{Introduction}\label{sect.intro}
\subsection{Beauville surfaces and surfaces isogenous to a higher product}
A surface $S$ is \emph{isogenous to a higher product of curves} if it is a
quotient $(C_1 \times C_2)/G$, where $C_1$ and $C_2$ are curves of
genus at least two, and $G$ is a finite group acting freely on
$C_1 \times C_2$.

In \cite{Cat00} it has been proved that any
surface isogenous to a higher product
has a unique \emph{minimal realization} as a quotient $(C_1 \times C_2)/G$,
where $G$ is a finite group acting freely and with the property
that no element acts trivially on one of the two factors $C_i$. We shall
then work only with minimal realizations.

We have two cases: the \emph{mixed} case where the action of $G$
exchanges the two factors (and then $C_1$ and $C_2$ are
isomorphic), and the \emph{unmixed} case where $G$ acts diagonally
on their product.

We shall use the standard notation in surface theory. We denote by
$p_g:=h^0(S,\Omega^2_S)$ the \emph{geometric genus} of $S$,
$q:=h^0(S,\Omega^1_S)$ the \emph{irregularity} of $S$,
$\chi(S)=1+p_g-q$ the \emph{holomorphic Euler-Poincar\'{e}
characteristic}, $e(S)$ the \emph{topological Euler number}, and
$K^2_S$ the \emph{self-intersection of the canonical divisor} (see e.g. \cite{Be}).

A surface $S$ isogenous to a higher product is in particular a minimal surface
of general type and the numerical invariants of $S$ are related by the following formulae
\begin{equation}\label{eq.chie} K^2_S=8\chi(S) \text{ and } e(S)=4\chi(S),
\end{equation}
by \cite[Theorem 3.4]{Cat00}. Moreover, by \cite{Cat00}, 
the irregularity of these surfaces is computed by
\begin{equation}\label{eq.irregularity} q(S)=g(C_1/G)+g(C_2/G).
\end{equation}

By the above formula \eqref{eq.irregularity}, a surface $S$ isogenous to a higher product
of curves has $q(S)=0$ if and only if the two quotients $C_i/G$
are isomorphic to $\mathbb{P}^1$. Moreover if both coverings $C_i
\rightarrow C_i/G\cong \mathbb{P}^1$ are ramified in exactly $3$
points we call $S$ a \emph{Beauville surface}. This last condition
is equivalent to saying that Beauville surfaces are rigid, i.e.
have no nontrivial deformations.

Beauville surfaces were introduced by Catanese in \cite{Cat00},
inspired by a construction of Beauville (see \cite{Be})
After this inspiring  paper the interest in Beauville surfaces has been
enormously increased, see for example \cite{BCG05, BCG06, FG, FGJ,
FJ, FMP, GLL, GP, GJT, GM}.


In this paper we shall deal only with \emph{regular} (i.e., with $q=0$) surfaces isogenous to a product of unmixed type, clearly the Beauville surfaces are among them.

Working out the definition of surfaces isogenous to a product one sees that
there is a pure group theoretical condition which characterizes
the groups of such surfaces: the existence of a so-called ''\emph{ramification structure}''
(see the discussion in Section 2).

\begin{defin}\label{defn.sphergen}
Let $G$ be a finite group and $r \geq 3$ an integer.
\begin{itemize}
\item An $r-$tuple $T=(x_1,\ldots,x_r)$ of elements of $G$ is called a
\emph{spherical $r-$system of generators} of $G$ if $ \langle
x_1,\ldots,x_r \rangle=G$ and $x_1 \cdot \ldots \cdot x_r=1$.

\item We say that $T$ is of \emph{type} $\tau=(m_1, \dots ,m_r)$ if
the orders of $(x_1,\dots,x_r)$ are respectively $(m_1, \dots
,m_r)$.

\item Moreover, two spherical $r_i-$systems $T_1=(x_1, \dots , x_{r_1})$ and $T_2=(x_1, \dots , x_{r_2})$
are said to be \emph{disjoint}, if:
\begin{equation}\label{eq.sigmasetcond} \Sigma(T_1)
\bigcap \Sigma(T_2)= \{ 1 \},
\end{equation}
where
\[ \Sigma(T_i):= \bigcup_{g \in G} \bigcup^{\infty}_{j=0} \bigcup^{r_i}_{k=1} g \cdot x^j_{i,k} \cdot
g^{-1}.
\]
\end{itemize}
\end{defin}

\begin{defin}\label{def.rami.structure} 
Let $3 \leq r_1,r_2 \in \mathbb{N}$ and let $\tau_1=(m_{1,1}, \dots ,m_{1,r_1})$,
$\tau_2=(m_{2,1}, \dots ,m_{2,r_2})$ be two sequences of natural numbers such
that $m_{i,k} \geq 2$.  

A \emph{(spherical-) unmixed ramification structure} of type $(\tau_1,\tau_2)$ and size
$(r_1,r_2)$ for a finite group $G$, is a pair $(T_1,T_2)$
of disjoint spherical systems of generators of $G$, whose types are $(\tau_1,\tau_2)$.

An \emph{unmixed Beauville structure} is an unmixed ramification structure with
$r_1=r_2=3$.
\end{defin}

\begin{defin} A triple $(r,s,t)\in \mathbb{N}^3$ is said to be hyperbolic if
$$\frac{1}{r}+\frac{1}{s}+\frac{1}{t}<1.$$
\end{defin}

\subsection{Moduli spaces of surfaces}

By a celebrated Theorem of Gieseker (see \cite{Gie}), once the two
invariants of a minimal surface $S$ of general type, the
\emph{self-intersection of the canonical divisor} $y:=K^2_S$ and
the \emph{holomorphic Euler-Poincar\'e characteristic}
$x:=\chi(S)$, are fixed, then there exists a coarse
quasiprojective moduli space $\mathcal{M}_{y,x}$ of minimal smooth
complex surfaces of general type with these invariants. This space
consists of a finite number of connected components which can have
different dimensions, see \cite{Cat84}. The union $\mathcal{M}$
over all admissible pairs of invariants ($y,x$) of these spaces is
called the \emph{moduli space of surfaces of general type}.

If $S$ is a smooth minimal surface of general type, which is also regular, 
we denote by $\mathcal{M}(S)$ the subvariety of
$\mathcal{M}_{y,x}$, corresponding to surfaces
(orientedly) homeomorphic to $S$. Moreover we shall denote by
$\mathcal{M}^0_{y,x}$ the subspace of the moduli space
corresponding to regular surfaces.

It is known that
the number of connected components $\delta(y, x)$ of
$\mathcal{M}^0_{y,x}$ is bounded from above by a function in $y$, more precisely 
it follows from \cite{Cat92} that
$\delta(y, x) \leq cy^{77y^2}$, where $c$ is a positive constant. Hence, the number of
components has an exponential upper bound in $K^2$.

There are also some results regarding the lower bound. In \cite{Man}, for example,
a sequence $S_n$ of simply connected
surfaces of general type was constructed, such that the lower bound
for the number of the connected components  $\delta(S_n)$  of $\mathcal{M}(S_n)$ is given by
\[
\delta(S_n) \geq y_n^{\frac{1}{5}log y_n}.
\]

\subsection{The results}
The motivation of this paper is to show that using pure group
theoretical methods we are able to give the asymptotic growth of
the number of connected components of the moduli space of surfaces
of general type relative to certain sequences of surfaces. More
precisely, we exploit the definition and properties of regular
surfaces isogenous to a product of curves and in particular, 
Beauville surfaces, to reduce the geometric problem
of computing the number of connected components into the algebraic one of counting
orbits of some group actions, which can be effectively computed.

In \cite{Cat00}, Catanese studied the moduli space of surfaces
isogenous to a higher product of curves (see \cite[Theorem 4.14]{Cat00}). As a
result, he obtains that the moduli space of regular surfaces isogenous to
a higher product with fixed invariants: a finite group $G$ and
types $(\tau_1,\tau_2)$, consists of a
finite number of connected components of $\mathcal{M}$. We remark
here that since Beauville surfaces are rigid, a Beauville surface
yields an isolated point in the moduli space. A group theoretical
method to count the number of these components was given in
\cite{BC}. Using this method, we deduce the following Theorems, 
in which we use the following standard notations.

\begin{notation} Denote:
\begin{itemize}
\item $h(n)=O(g(n))$, if $h(n) \leq cg(n)$ for some 
constant $c>0$, as $n \to \infty$.

\item $h(n)=\Omega(g(n))$, if $h(n) \geq cg(n)$ for some 
constant $c>0$, as $n \to \infty$.

\item $h(n)=\Theta(g(n))$, if $c_1g(n) \leq h(n) \leq c_2g(n)$ for
some constants $c_1,c_2>0$, as $n \to \infty$.
\end{itemize}
\end{notation}

We shall first consider alternating and symmetric groups.

\begin{theo}\label{thm.moduli.An}
Let $\tau_1=(m_{1,1}, \dots ,m_{1,r_1})$ and $\tau_2=(m_{2,1}, \dots ,m_{2,r_2})$ be two sequences of natural numbers such
that $m_{i,k} \geq 2$ and $\sum_{k=1}^{r_i}(1-1/m_{i,k}) > 2$ for
$i=1,2$. Let $h(\mathfrak{A}_n; \tau_1, \tau_2)$ be the number
of connected components of the moduli space of surfaces isogenous to a product with $q=0$, 
with the alternating group $\mathfrak{A}_n$, and with type
$(\tau_1,\tau_2)$. Then
\[
    (a) \quad h(\mathfrak{A}_n; \tau_1, \tau_2) = \Omega(n^{r_1+r_2}),
\]
and moreover,
\[
    (b) \quad h(\mathfrak{A}_n; \tau_1, \tau_2) =
    \Omega\bigl(\bigl(\log(\chi)\bigr)^{r_1+r_2-\epsilon}\bigr).
\]
where $0< \epsilon \in \mathbb{R}$.
\end{theo}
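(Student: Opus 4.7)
The plan is to apply the Bauer--Catanese combinatorial description from \cite{BC}, which identifies the connected components of the moduli space of regular surfaces isogenous to a product with group $G$ and type $(\tau_1,\tau_2)$ with the orbits of a natural group action on the set of unmixed ramification structures $(T_1,T_2)$. The acting group is generated by $\Aut(G)$ (acting diagonally by simultaneous conjugation), the Hurwitz braid groups $B_{r_1}\times B_{r_2}$ acting on the individual tuples, the tuple inversions, and -- when $\tau_1=\tau_2$ -- the $\ZZ/2$ swap of the two factors. Each of these operations preserves, for each $i$, the unordered multi-set of $\mathfrak{S}_n$-conjugacy classes of the entries of $T_i$: the Hurwitz moves replace entries by conjugates of other entries, and the identification $\Aut(\mathfrak{A}_n)=\mathfrak{S}_n$ (for $n\neq 6$) makes the $\Aut$-action simultaneous $\mathfrak{S}_n$-conjugation, which preserves cycle type. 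Hence every orbit determines a well-defined pair of multi-sets of cycle types $(\hat\tau_1,\hat\tau_2)$ refining $(\tau_1,\tau_2)$, and $h(\mathfrak{A}_n;\tau_1,\tau_2)$ is bounded below by the number of such refined pairs that are realized by some generating, disjoint spherical pair.

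For each fixed $m\geq 2$ and $n$ large, the number of cycle types of order-$m$ elements of $\mathfrak{A}_n$ is $\Omega(n)$: for instance, when $m$ is an odd prime, the cycle types $m^{a}\,1^{n-am}$ for $1\leq a\leq\lfloor n/m\rfloor$ are all distinct. Choosing such a cycle type independently at each of the $r_1+r_2$ positions, and arranging -- at negligible asymptotic cost -- that whenever $m_{1,k}=m_{2,l}$ the supports of the chosen order-$m$ elements differ (so as to ensure $\Sigma(T_1)\cap\Sigma(T_2)=\{1\}$), yields $\Omega(n^{r_1+r_2})$ candidate refined pairs. To realize each of them I would use the Frobenius character formula
\[
\#\{(x_1,\dots,x_r)\in C_1\times\cdots\times C_r : x_1\cdots x_r=1\} = \frac{|C_1|\cdots|C_r|}{|\mathfrak{A}_n|}\sum_\chi\frac{\chi(C_1)\cdots\chi(C_r)}{\chi(1)^{r-2}},
\]
which, under the hyperbolicity hypothesis and standard character-ratio bounds for alternating groups, is strictly positive (the trivial character dominates), combined with random-generation results for $\mathfrak{A}_n$ in the tradition of Dixon, which guarantee that almost all such tuples generate the whole group. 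This yields at least one generating disjoint representative of each refined type, and proves (a).

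For (b), Riemann--Hurwitz applied to the covers $C_i\to C_i/\mathfrak{A}_n\cong\pp^1$ together with the identity $\chi(S)=(g(C_1)-1)(g(C_2)-1)/|\mathfrak{A}_n|$ gives
\[
\chi(S)=\frac{|\mathfrak{A}_n|}{4}\Bigl(\sum_{k=1}^{r_1}(1-\tfrac{1}{m_{1,k}})-2\Bigr)\Bigl(\sum_{k=1}^{r_2}(1-\tfrac{1}{m_{2,k}})-2\Bigr),
\]
of order $n!$. Hence $\log\chi=\Theta(n\log n)$ and $n=\Theta(\log\chi/\log\log\chi)$. For any $\epsilon>0$ and $n$ large one has $\log\log\chi\leq(\log\chi)^{\epsilon/(r_1+r_2)}$, so $n^{r_1+r_2}\geq(\log\chi)^{r_1+r_2-\epsilon}$, and (b) follows from (a). The main obstacle is the realizability step: one must verify \emph{uniformly} across the polynomially many candidate refined types that the Frobenius count is strictly positive, that enough of the counted tuples generate $\mathfrak{A}_n$, and that disjointness can be accommodated at the same time. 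This is where the asymptotic representation theory of alternating groups enters essentially; the geometric input is only the structural theorem of Bauer--Catanese.
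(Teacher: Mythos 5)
Your overall strategy coincides with the paper's: both arguments reduce to counting orbits via the Bauer--Catanese correspondence, observe that the braid action and the $\Aut(\mathfrak{A}_n)=\mathfrak{S}_n$ action preserve the multiset of $\mathfrak{S}_n$-conjugacy classes of the entries of each $T_i$ (this is the paper's Lemma~\ref{lem.two.Br.action}), produce $\Omega(n)$ admissible classes per position to get $\Omega(n^{r_1+r_2})$ orbits, and deduce (b) from (a) via $\chi=\Theta(|\mathfrak{A}_n|)=\Theta(n!)$. Your part (b) is correct and is exactly what the paper does through Proposition~\ref{prop.asy.comp}. The divergence is in the realizability step, which the paper outsources to Liebeck--Shalev \cite{LS04} (Theorem~\ref{thm.conj.An}, via \cite{GP}) and which you propose to redo by hand; as sketched, it has genuine gaps.

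First, the family of cycle types $m^a1^{n-am}$, $1\le a\le\lfloor n/m\rfloor$, is too large: for small $a$ such elements have support of size $O(1)$ and cannot occur in any generating tuple, and for classes with $\Theta(n)$ fixed points the assertion that ``the trivial character dominates'' the Frobenius sum is false in general (character ratios for, e.g., the standard representation are close to $1$ on such classes, and the sum runs over superpolynomially many irreducibles). This is precisely why the paper restricts to \emph{almost homogeneous} classes $(m^k,1^f)$ with $f$ bounded, for which \cite{LS04} establishes the required estimates. Second, Dixon-type random generation concerns uniformly random elements, not tuples constrained to prescribed conjugacy classes with product $1$; the statement you actually need --- that a random element of $\Hom_{\mathbf{C}}(\Gamma,\mathfrak{S}_n)$ has image containing $\mathfrak{A}_n$ with probability tending to $1$ --- is exactly \cite[Theorem~1.9]{LS04}, whose proof also requires bounding $\sum_{H<\mathfrak{S}_n}|\Hom_{\mathbf{C}}(\Gamma,H)|$ over subgroups, an ingredient missing from your sketch. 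Third, your handling of disjointness is insufficient: $\Sigma(T_i)$ contains all conjugates of all \emph{powers} of the entries, so it does not suffice to separate entries of equal order (an entry of order $4$ in $T_1$ and one of order $6$ in $T_2$ may have conjugate involutions among their powers), and ``different supports'' does not prevent conjugacy --- only different cycle types do. The paper's device (Algorithm~\ref{alg.conj.class}) is to choose all $(r_1+r_2)k$ classes almost homogeneous with pairwise distinct numbers of fixed points; since every nontrivial power of an element of shape $(m^k,1^f)$ again has exactly $f$ fixed points, this excludes all conjugacies between nontrivial powers simultaneously. You would need to incorporate this (or an equivalent mechanism) for the count of realized refined types to go through.
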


\begin{theo}\label{thm.moduli.Sn}
Let $\tau_1 = (m_{1,1},\dots,m_{1,r_1})$ and $\tau_2 =
(m_{2,1},\dots,m_{2,r_2})$ be two sequences of natural numbers such
that $m_{i,k} \geq 2$, at least two of $(m_{i,1},\dots,m_{i,r_i})$
are even and $\sum_{k=1}^{r_i}(1-1/m_{i,k}) > 2$, for $i=1,2$. 
Let $h(\mathfrak{S}_n; \tau_1, \tau_2)$ be the number
of connected components of the moduli space of surfaces isogenous to a product with $q=0$, 
with the symmetric group
$\mathfrak{S}_n$, and with type $(\tau_1,\tau_2)$. Then
\[
    (a)\quad h(\mathfrak{S}_n; \tau_1, \tau_2) = \Omega(n^{r_1+r_2}),
\]
and moreover,
\[
    (b) \quad h(\mathfrak{S}_n; \tau_1, \tau_2) =
    \Omega\bigl(\bigl(\log(\chi)\bigr)^{r_1+r_2-\epsilon}\bigr).
\]
where $0< \epsilon \in \mathbb{R}$.
\end{theo}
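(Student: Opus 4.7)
The plan is to mirror the strategy of Theorem~\ref{thm.moduli.An}, using the parity hypothesis to lift the alternating-group construction to $\mathfrak{S}_n$. By the group-theoretic translation of Bauer--Catanese~\cite{BC}, $h(\mathfrak{S}_n;\tau_1,\tau_2)$ equals the number of orbits of $\Aut(\mathfrak{S}_n)\times\mathcal{H}_{r_1}\times\mathcal{H}_{r_2}$ on the set $\mathfrak{T}$ of disjoint unmixed ramification structures of type $(\tau_1,\tau_2)$ in $\mathfrak{S}_n$, where $\mathcal{H}_{r_i}$ denotes the finite image of the Hurwitz braid group acting on $r_i$-tuples of type $\tau_i$. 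Since $\Aut(\mathfrak{S}_n)$ is inner for $n\neq 6$ and $|\mathcal{H}_{r_i}|$ is bounded in terms of $r_i$ alone, every orbit has cardinality $O(n!)$. Hence it suffices to exhibit $\Omega(n^{r_1+r_2}\cdot n!)$ elements of $\mathfrak{T}$.

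Next, for each $i\in\{1,2\}$, I would construct a family of $\Omega(n^{r_i}\cdot n!)$ spherical systems of generators of $\mathfrak{S}_n$ of type $\tau_i$. Starting from the generating systems of $\mathfrak{A}_n$ produced in the proof of Theorem~\ref{thm.moduli.An}, the parity hypothesis allows me to flip the parity at two positions with even orders $m_{i,k}$ and $m_{i,\ell}$: each of those generators is replaced by an odd permutation of the same order (for instance a cycle of that length), while keeping the product equal to the identity. This yields a tuple that generates all of $\mathfrak{S}_n$ rather than sitting inside $\mathfrak{A}_n$. Varying these replacements over $\Omega(n)$ options independently at the remaining free positions (at least $r_i-2$ of them) produces the polynomial factor $n^{r_i}$, and conjugation by inner automorphisms supplies the factor $n!$.

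It then remains to verify the disjointness condition~\eqref{eq.sigmasetcond}. Since $\Sigma(T_i)$ is determined by the cycle types of the elements of $T_i$ and of their powers, I would arrange the two constructions so that the power-closed collections of cycle shapes appearing in $T_1$ and in $T_2$ are disjoint; the flexibility granted by $\sum_k(1-1/m_{i,k})>2$ gives enough room to tune the cycle lengths so that this is automatic. Dividing the total count of disjoint pairs by the orbit bound $O(n!)$ then yields
\[
h(\mathfrak{S}_n;\tau_1,\tau_2) = \Omega(n^{r_1+r_2}),
\]
which is part~(a). For part~(b), the Riemann--Hurwitz computation gives $\chi(S)=\frac{|G|}{4}\prod_{i=1,2}\bigl(\sum_k(1-1/m_{i,k})-2\bigr)=\Theta(n!)$, so $\log\chi=\Theta(n\log n)$ and consequently $n=\Omega(\log\chi/\log\log\chi)$, from which $n^{r_1+r_2}=\Omega\bigl((\log\chi)^{r_1+r_2-\epsilon}\bigr)$ for every $\epsilon>0$.

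The hard part will be the parity-flipping construction when some $m_{i,k}=2$: order-two elements of $\mathfrak{S}_n$ are products of disjoint transpositions, so parity and cycle type are rigidly linked at such a position, and producing enough odd-parity replacements while preserving both the product relation $x_{i,1}\cdots x_{i,r_i}=1$ and the disjointness of the two systems requires a careful case analysis that does not arise in the $\mathfrak{A}_n$ setting.
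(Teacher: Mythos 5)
There is a genuine gap at the heart of your counting scheme. You propose to exhibit $\Omega(n^{r_1+r_2}\cdot n!)$ ramification structures and divide by a claimed uniform orbit bound $O(n!)$, justified by asserting that the image $\mathcal{H}_{r_i}$ of the braid group on tuples of type $\tau_i$ has order bounded in terms of $r_i$ alone. That assertion is false: the image of $\mathbf{B}_{r}$ in the symmetric group on the (finite) set of spherical $r$-systems depends on $G$, and braid orbits on Nielsen classes of $\mathfrak{S}_n$ are typically of size comparable to the whole class set, i.e.\ of order roughly $|G|^{r-2}\cdot\prod_i|C_i|/|G|^{r-1}\cdot|G|^{r-1}$ --- in the case $r=3$ already about $(n!)^2$, far exceeding $n!$. (Indeed, braid-transitivity results often show a \emph{single} orbit exhausts all generating tuples with prescribed classes.) So the division step collapses. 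The paper avoids any upper bound on orbit sizes: since the Hurwitz action preserves the conjugacy class (hence the $\Aut(G)$-class) of each entry, tuples supported on distinct collections of classes lie in distinct orbits (Lemma~\ref{lem.two.Br.action}); one then produces $\Omega(n)$ almost homogeneous classes of each required order, pairwise distinguished by their numbers of fixed points (a variant of Algorithm~\ref{alg.conj.class}), giving $k^{r_1+r_2}$ components directly with $k=\Omega(n)$. Your own bookkeeping is also inconsistent: varying ``at least $r_i-2$ free positions'' over $\Omega(n)$ options yields only $n^{r_i-2}$, not the claimed $n^{r_i}$.

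The second gap is existence. Your parity-flipping construction must produce, for each prescribed tuple of classes, a tuple that simultaneously has product $1$, generates all of $\mathfrak{S}_n$, and is disjoint from its partner; replacing two entries by odd permutations of the same order destroys the product relation and gives no control on the subgroup generated, and you flag the $m_{i,k}=2$ case yourself as unresolved. The paper does not build these tuples by hand: it invokes the Liebeck--Shalev results (Theorem~\ref{thm.conj.An} and Corollary~\ref{cor.conj.Sn}, the latter being exactly where the hypothesis that at least two periods of each $\tau_i$ are even enters) to guarantee that random homomorphisms landing in the prescribed almost homogeneous classes are surjective onto $\mathfrak{S}_n$ with probability tending to $1$. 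Without such an input your construction does not get off the ground. Your part~(b), deducing $n=\Omega(\log\chi/\log\log\chi)$ from $\chi=\Theta(n!)$ via Proposition~\ref{prop.asy.comp}, matches the paper and is fine once part~(a) is established.
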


The proofs of part (a) of both Theorems are presented in Section~\ref{subsec.Hur.An.Sn}, 
and are based on results of Liebeck and Shalev \cite{LS04}. The proofs of part (b)
of both Theorems appear in Section \ref{sec.count.moduli}.
In the special case of Beauville surfaces we immediately deduce the following.

\begin{cor}\label{cor.moduli.Beu.An}
Let $\tau_1=(r_1,s_1,t_1)$ and $\tau_2=(r_2,s_2,t_2)$ be two
hyperbolic types and let $h(\mathfrak{A}_n; \tau_1, \tau_2)$ be the number of
Beauville surfaces with the alternating group $\mathfrak{A}_n$ and with types
$(\tau_1,\tau_2)$. Then
\[
    (a) \quad h(\mathfrak{A}_n; \tau_1, \tau_2) = \Omega(n^6),
\]
and moreover,
\[
    (b) \quad h(\mathfrak{A}_n; \tau_1, \tau_2) =
    \Omega\bigl(\bigl(\log(\chi)\bigr)^{6-\epsilon}\bigr).
\]
where $0< \epsilon \in \mathbb{R}$.
\end{cor}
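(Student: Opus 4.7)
The plan is to deduce Corollary \ref{cor.moduli.Beu.An} as an immediate specialization of Theorem \ref{thm.moduli.An} to the case $r_1 = r_2 = 3$. The only thing to check is that the hyperbolicity hypothesis on $\tau_1$ and $\tau_2$ implies the sum condition $\sum_{k=1}^{r_i}(1 - 1/m_{i,k}) > 2$ required by that theorem.

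First I would translate the hyperbolicity condition. For a triple $\tau_i = (r_i, s_i, t_i)$ with $1/r_i + 1/s_i + 1/t_i < 1$, one has
\[
\sum_{k=1}^{3}\Bigl(1 - \frac{1}{m_{i,k}}\Bigr) = 3 - \Bigl(\frac{1}{r_i} + \frac{1}{s_i} + \frac{1}{t_i}\Bigr) > 3 - 1 = 2.
\]
So both $\tau_1$ and $\tau_2$ satisfy the hypotheses of Theorem \ref{thm.moduli.An}.

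Next, I would recall that by the definition of a Beauville surface (unmixed Beauville structure, Definition \ref{def.rami.structure}), these are precisely the regular surfaces isogenous to an unmixed higher product of curves for which both coverings $C_i \to C_i/G \cong \mathbb{P}^1$ ramify over exactly three points, i.e., for which $r_1 = r_2 = 3$. Therefore the number $h(\mathfrak{A}_n;\tau_1,\tau_2)$ of Beauville surfaces with group $\mathfrak{A}_n$ and types $(\tau_1,\tau_2)$ coincides with the quantity bounded from below in Theorem \ref{thm.moduli.An}.

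Applying Theorem \ref{thm.moduli.An}(a) with $r_1 + r_2 = 6$ yields part (a), namely $h(\mathfrak{A}_n;\tau_1,\tau_2) = \Omega(n^6)$, and applying Theorem \ref{thm.moduli.An}(b) with $r_1 + r_2 = 6$ yields part (b), namely $h(\mathfrak{A}_n;\tau_1,\tau_2) = \Omega\bigl((\log\chi)^{6-\epsilon}\bigr)$ for some $0 < \epsilon \in \mathbb{R}$. Since nothing beyond the specialization $r_1 = r_2 = 3$ and the elementary reformulation of the hyperbolicity condition is needed, there is no serious obstacle: the content is entirely carried by Theorem \ref{thm.moduli.An}, and the corollary is essentially a one-line consequence.
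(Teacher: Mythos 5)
Your proposal is correct and matches the paper exactly: the paper states this corollary as an immediate specialization of Theorem \ref{thm.moduli.An} to $r_1=r_2=3$, and your verification that hyperbolicity of $\tau_i$ is equivalent to $\sum_{k=1}^{3}(1-1/m_{i,k})>2$ is precisely the (implicit) content of that deduction.
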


\begin{cor}\label{cor.moduli.Beu.Sn}
Let $\tau_1=(r_1,s_1,t_1)$ and $\tau_2=(r_2,s_2,t_2)$ be two
hyperbolic types, assume that at least two of $(r_1,s_1,t_1)$ are
even and at least two of $(r_2,s_2,t_2)$ are even, and let $h(\mathfrak{S}_n;
\tau_1, \tau_2)$ be the number of Beauville surfaces with the symmetric group
$\mathfrak{S}_n$ and with types $(\tau_1,\tau_2)$. Then
\[
    (a)\quad h(\mathfrak{S}_n; \tau_1, \tau_2) = \Omega(n^6),
\]
and moreover,
\[
    (b) \quad h(\mathfrak{S}_n, \tau_1, \tau_2) =
    \Omega\bigl(\bigl(\log(\chi)\bigr)^{6-\epsilon}\bigr).
\]
where $0< \epsilon \in \mathbb{R}$.
\end{cor}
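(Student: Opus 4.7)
The plan is to derive this corollary as a direct specialization of Theorem~\ref{thm.moduli.Sn} to the case $r_1 = r_2 = 3$. First, I would observe that by Definition~\ref{def.rami.structure}, an unmixed Beauville structure is precisely an unmixed ramification structure with $r_1 = r_2 = 3$. Since Beauville surfaces are regular (the quotients $C_i/G$ are $\mathbb{P}^1$'s ramified in exactly $3$ points), counting Beauville surfaces with group $\mathfrak{S}_n$ and types $(\tau_1,\tau_2)$ is the same as counting connected components of the moduli space of surfaces isogenous to a product with $q=0$, group $\mathfrak{S}_n$, and types $(\tau_1,\tau_2)$, where each $\tau_i$ is a triple.

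Next, I would check that the hypotheses of Theorem~\ref{thm.moduli.Sn} are met. Given a hyperbolic triple $(r_i,s_i,t_i)$, the condition $\frac{1}{r_i}+\frac{1}{s_i}+\frac{1}{t_i}<1$ rearranges to
\[
\Bigl(1-\frac{1}{r_i}\Bigr)+\Bigl(1-\frac{1}{s_i}\Bigr)+\Bigl(1-\frac{1}{t_i}\Bigr)>2,
\]
which is exactly the inequality $\sum_{k=1}^{r_i}(1-1/m_{i,k})>2$ appearing in Theorem~\ref{thm.moduli.Sn} specialized to $r_i=3$. The additional hypothesis that at least two entries of each triple $(r_i,s_i,t_i)$ are even is assumed in the statement of the corollary and matches exactly the parity condition in the theorem.

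Finally, applying Theorem~\ref{thm.moduli.Sn} with $r_1=r_2=3$ gives $r_1+r_2=6$, so that
\[
h(\mathfrak{S}_n;\tau_1,\tau_2)=\Omega(n^6)\qquad\text{and}\qquad h(\mathfrak{S}_n;\tau_1,\tau_2)=\Omega\bigl(\bigl(\log(\chi)\bigr)^{6-\epsilon}\bigr),
\]
which is precisely the content of the corollary. There is no genuine obstacle in this argument; all the substantive work, namely the asymptotic counts of spherical generating triples in $\mathfrak{S}_n$ and the translation from these counts into the number of connected components of moduli, is carried out in the proof of Theorem~\ref{thm.moduli.Sn} in Sections~\ref{subsec.Hur.An.Sn} and~\ref{sec.count.moduli}. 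The role of the corollary is simply to record the specialization relevant to the rigid case of Beauville surfaces.
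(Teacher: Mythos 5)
Your proposal is correct and matches the paper exactly: the corollary is stated there as an immediate specialization of Theorem~\ref{thm.moduli.Sn} to $r_1=r_2=3$, and your verification that the hyperbolic condition $\frac{1}{r_i}+\frac{1}{s_i}+\frac{1}{t_i}<1$ is equivalent to $\sum_{k=1}^{3}(1-1/m_{i,k})>2$ is precisely the translation needed. Nothing further is required.
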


For the group $\PSL(2,p)$ we obtain the following.

\begin{theo}\label{thm.moduli.PSLp}
Let $\tau_1$ and $\tau_2$ be two hyperbolic triples, let $p$ be an odd
prime, and consider the group $\PSL(2,p)$. Let $h(\PSL(2,p); \tau_1,
\tau_2)$ be the number of Beauville surfaces with group $\PSL(2,p)$
and with types $(\tau_1,\tau_2)$. Then there exists a constant
$c=c(\tau_1,\tau_2)$, which depends only on the types and not on
$p$, such that
\[
    \quad h(\PSL(2,p); \tau_1, \tau_2) \leq c.
\]
\end{theo}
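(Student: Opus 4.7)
The plan is to parametrize Beauville surfaces with group $\PSL(2,p)$ and fixed types $(\tau_1,\tau_2)$ by unmixed Beauville structures modulo suitable equivalences, and then to bound the number of such equivalence classes using Frobenius's character formula together with known facts about the character table of $\PSL(2,p)$. Concretely, by the group-theoretic description of the moduli given in \cite{Cat00} and \cite{BC}, $h(\PSL(2,p);\tau_1,\tau_2)$ is at most the number of pairs $(T_1,T_2)$ of spherical triples of generators of $\PSL(2,p)$ of types $\tau_1,\tau_2$, modulo the diagonal action of $\Aut(\PSL(2,p))$; the further braid-group and swap equivalences would only reduce this count. Hence it suffices to bound, separately for each hyperbolic type $\tau=(r,s,t)$, the number of $\Aut(\PSL(2,p))$-orbits of spherical generating triples of type $\tau$ by a constant depending only on $\tau$.

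For each choice of conjugacy classes $C_1,C_2,C_3$ of $\PSL(2,p)$ whose elements have orders $r,s,t$ respectively, Frobenius's formula gives
\[
\bigl|\{(x_1,x_2,x_3)\in C_1\times C_2\times C_3 : x_1x_2x_3=1\}\bigr|=\frac{|C_1||C_2||C_3|}{|\PSL(2,p)|}\sum_{\chi}\frac{\chi(c_1)\chi(c_2)\chi(c_3)}{\chi(1)}.
\]
For $p$ larger than $\max(r,s,t)$, each $c_i$ is semisimple, so the character values $|\chi(c_i)|$ are bounded by a constant $B(\tau)$ depending only on $\tau$, as can be read off the explicit character table of $\PSL(2,p)$. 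Since $\chi(1)\geq(p-1)/2$ for every nontrivial irreducible character, the inner sum equals $1+O(1/p)$. Combined with $|C_i|\leq p(p+1)$ and $|\PSL(2,p)|=p(p^2-1)/2$, one obtains that the number of spherical triples with trivial product in any such class tuple is $O(p^3)$. The number of conjugacy classes of a fixed order $m$ in $\PSL(2,p)$ is bounded uniformly in $p$, so summing over all admissible class tuples still gives $O(p^3)$ spherical generating triples of type $\tau$, with implied constant depending only on $\tau$.

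Finally, $\Aut(\PSL(2,p))=\PGL(2,p)$ for odd $p\geq 5$ acts by coordinate-wise conjugation on generating triples, and this action has trivial stabilizer: any automorphism fixing a generating set of $\PSL(2,p)$ pointwise is the identity. Consequently every $\Aut$-orbit has size $|\PGL(2,p)|=p(p^2-1)=\Theta(p^3)$, so the number of orbits is $O(p^3)/\Theta(p^3)=O(1)$, with constant depending only on $\tau$. Applying this bound to both triples in a Beauville structure yields the desired uniform upper bound $c(\tau_1,\tau_2)$. The main obstacle is the character-theoretic estimate: one has to check, case by case on the character table of $\PSL(2,p)$, that $|\chi(g)|$ for $g$ of fixed order $m$ is bounded independently of $p$. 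The exceptional characters of degree $(p\pm1)/2$ can take values of order $\sqrt{p}$ on unipotent elements, so the hypothesis $p>\max(r,s,t)$ is used to keep all $c_i$ semisimple; the finitely many small primes contribute only a bounded number of structures and are absorbed into the constant.
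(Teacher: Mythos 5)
There is a genuine gap, and it sits in the very first reduction. You discard the braid-group action ("the further braid-group and swap equivalences would only reduce this count") and then claim it suffices to bound, for each type $\tau$ separately, the number of $\Aut(\PSL(2,p))$-orbits of generating triples of type $\tau$. But in Definition~\ref{def.Hur.comp} the group $\Aut(G)$ acts \emph{diagonally} (simultaneous conjugation on $T_1$ and $T_2$), and the number of orbits of a diagonal action on $X\times Y$ is not bounded by the product of the orbit counts on $X$ and on $Y$. Your own estimates make the failure quantitative: there are $\Theta(p^3)$ generating triples of each type, hence $\Theta(p^6)$ pairs, while every diagonal $\Aut$-orbit has size at most $|\PGL(2,p)|=\Theta(p^3)$; so the diagonal action alone leaves $\Theta(p^3)$ classes, which tends to infinity. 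The braid action is therefore not an optional refinement but the essential ingredient: the paper proves (Lemma~\ref{lem.B3.action}, Corollary~\ref{cor.B3.Inn}, resting on Lemma~\ref{lem.Vol}) that each $\mathbf{B}_3$-orbit of an unordered triple coincides with its $\Inn(G)$-orbit, so the full equivalence group $(\mathbf{B}_3\times\mathbf{B}_3)\times\Aut(G)$ contains \emph{independent} conjugation on the two factors. It is this componentwise action that makes the orbit count multiplicative, giving $h\leq d(G;\tau_1)\cdot d(G;\tau_2)$ (Corollary~\ref{cor.hG}) and hence a constant bound.

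The per-factor part of your argument is fine and is a genuinely different route from the paper's: you count triples with prescribed product via Frobenius's character formula and divide by the size of a free $\PGL(2,p)$-action, whereas the paper invokes Macbeath's trace rigidity (two triples in $\SL(2,p)$ with product $1$ and equal traces are $\PGL(2,p)$-conjugate) to bound $d'(\PSL(2,p);(l,m,n))$ by the number of admissible trace triples, at most $\phi(l)\phi(m)\phi(n)/8$. Your character-theoretic estimate $|C_1||C_2||C_3|/|G|\cdot(1+O(1/p))=O(p^3)$ and the observation that an automorphism fixing a generating set pointwise is trivial are both correct, and they do yield a constant bound on the number of $\PGL(2,p)$-orbits of generating triples of a fixed type. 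To repair the proof, keep that computation but replace the diagonal-action reduction by the braid-orbit argument (or at least by the observation that Hurwitz equivalence contains inner conjugation on each factor separately), and then conclude via the multiplicativity of orbit counts for componentwise actions.
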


The proof of this Theorem is presented in Section~\ref{subsec.Hu.psl} 
(see also Remark \ref{rem.PSL2p}).

The following is a natural generalization of the results of \cite{BCG05}
regarding abelian groups.

\begin{theo}\label{theo.poly.large}
Let $\{S_p\}$ be the family of surfaces isogenous to a product with $q=0$ and with group
$G_p:=(\mathbb{Z}/p \mathbb{Z})^r$ admitting a ramification structure of type $\tau_p =
(p,\dots, p)$ ($p$ appears $(r+1)-$times) where $p$ is prime. If we denote by $h(G_p;\tau_p,\tau_p)$ the number of connected components of the  moduli space of isomorphism classes of surfaces isogenous to a product
with $q=0$ admitting these data, then
\[
h(G_p;\tau_p,\tau_p)=\Theta(\chi^{r}(S_p)).
\]
 \end{theo}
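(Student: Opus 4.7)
The plan is to adapt the counting approach of \cite{BCG05} (which handled the Beauville case $r=2$) to the present higher-rank setting. By the group-theoretic translation of Bauer--Catanese \cite{BC}, the quantity $h(G_p;\tau_p,\tau_p)$ equals the number of orbits of the action of $\Aut(G_p)\times \mathcal{B}_{r+1}\times\mathcal{B}_{r+1}$, together with a $\ZZ/2$ swapping the two systems, on the set of ordered disjoint pairs of spherical $(r+1)$-systems of type $(p,\dots,p)$ for $G_p$, where $\mathcal{B}_{r+1}$ is the Hurwitz braid group acting on each tuple. Since $G_p$ is abelian, the Hurwitz twist $(\dots,x_i,x_{i+1},\dots)\mapsto(\dots,x_ix_{i+1}x_i^{-1},x_i,\dots)$ collapses to a simple transposition, so $\mathcal{B}_{r+1}$ acts through its quotient $\mathfrak{S}_{r+1}$, while $\Aut(G_p)=\GL_r(\mathbb{F}_p)$.

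Next I would enumerate the ingredients. A spherical $(r+1)$-system of type $(p,\dots,p)$ for $G_p=\mathbb{F}_p^r$ is an ordered tuple $(x_1,\dots,x_{r+1})$ of nonzero vectors summing to $0$ that spans $\mathbb{F}_p^r$. Since $x_{r+1}$ is determined by the others, parameterising by the ordered basis $(x_1,\dots,x_r)$ yields $|\GL_r(\mathbb{F}_p)|=\Theta(p^{r^2})$ valid tuples; the loci where the first $r$ entries are dependent or where $x_{r+1}=0$ form proper subvarieties of codimension at least $1$, contributing only lower-order terms. The disjointness condition $\Sigma(T_1)\cap\Sigma(T_2)=\{1\}$ in an abelian group reads simply that no entry of $T_2$ lies on any of the $r+1$ lines $\langle x_{1,k}\rangle$ determined by $T_1$. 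For generic $T_1$ this excludes only $(r+1)(p-1)$ of the $p^r-1$ nonzero vectors per slot, so the same dimension count gives $\Theta(p^{r^2})$ choices of $T_2$ disjoint from $T_1$, and hence $\Theta(p^{2r^2})$ disjoint pairs in total.

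For the orbit count, the acting group has order $|\GL_r(\mathbb{F}_p)|\cdot((r+1)!)^2\cdot 2=\Theta(p^{r^2})$. Since any spanning tuple is fixed pointwise only by the identity of $\GL_r$, the stabiliser of any pair in the full acting group is bounded by $((r+1)!)^2\cdot 2$, a constant in $p$; in particular every orbit has size $\Theta(p^{r^2})$ with constants depending only on $r$. Therefore the number of orbits is $\Theta(p^{2r^2})/\Theta(p^{r^2})=\Theta(p^{r^2})$, giving both halves of the asymptotic.

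Finally, Riemann--Hurwitz applied to each covering $C_i\to\PP^1$ with $r+1$ branch points of order $p$ gives $2g(C_i)-2=p^{r-1}\bigl((r-1)p-(r+1)\bigr)$, whence by \eqref{eq.chie} we obtain $\chi(S_p)=p^{r-2}\bigl((r-1)p-(r+1)\bigr)^2/4=\Theta(p^r)$, and therefore $\chi(S_p)^r=\Theta(p^{r^2})$, matching the orbit count. The main technical obstacle is bounding the contribution of the non-generic loci (coincidences among the lines of $T_1$ or of $T_2$, or non-trivial overlaps of symmetries of the two tuples); this is routine but requires a careful stratification of the parameter space by the matroid of linear dependencies among the entries of each tuple.
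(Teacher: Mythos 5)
Your argument is correct and follows essentially the same route as the paper: the paper likewise splits the proof into $\chi(S_p)=\Theta(p^r)$ (Proposition~\ref{prop.asy.comp}) and $h(G_p;\tau_p,\tau_p)=\Theta(p^{r^2})$ (Proposition~\ref{prop.ZpZ.mult}), using that $\Aut(G_p)=\GL(r,p)$ acts with trivial pointwise stabiliser on spanning tuples and that the braid actions factor through $\mathfrak{S}_{r+1}$, contributing only factors bounded in terms of $r$. The only cosmetic difference is that the paper obtains the $\Theta(p^{r^2})$ count by normalising $T_1$ and explicitly constructing the admissible $T_2$, whereas you count all disjoint pairs generically and divide by the orbit size.
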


Therefore, there exist families of surfaces such that the degree of
the polynomial $h$ in $\chi$ (and so in $K^2$) can be arbitrarily large.
The proof of this Theorem appears in
Section \ref{sec.count.moduli}.
In the special case of Beauville surfaces we immediately deduce the following.

\begin{cor}\label{cor.poly.beau}
Let $\{S_p\}$ be the family of Beauville surfaces with
$G_p:=(\mathbb{Z}/p \mathbb{Z})^2$ admitting a ramification structure of type $\tau_p =
(p,p,p)$ where $p \geq 5$ is prime. If we denote by $h(G_p;\tau_p,\tau_p)$ the number of Beauville surfaces admitting these data, then
\[
h(G_p;\tau_p,\tau_p)=\Theta(\chi^{2}(S_p)).
\]
 \end{cor}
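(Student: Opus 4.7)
The plan is to deduce this corollary directly from Theorem \ref{theo.poly.large} by specializing the rank parameter $r$ to $2$. With $r=2$, the group $G_p = (\ZZ/p\ZZ)^r$ becomes $(\ZZ/p\ZZ)^2$, the type $\tau_p = (p,\dots,p)$ with $r+1$ entries becomes $(p,p,p)$, and the two spherical systems each have length $3$, which is precisely the Beauville condition $r_1 = r_2 = 3$ from Definition \ref{def.rami.structure}. The asymptotic bound $\Theta(\chi^r)$ then specializes to $\Theta(\chi^2)$, yielding the stated formula.

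Before invoking the theorem, I would verify that the hypotheses are actually met for all $p \geq 5$. First, hyperbolicity of $(p,p,p)$ amounts to $3/p < 1$, which requires $p \geq 4$, and since $p$ is prime this is equivalent to $p \geq 5$. This in turn implies the inequality $\sum_{k=1}^{3}(1 - 1/p) = 3(1-1/p) > 2$ needed in the hypothesis of Theorem \ref{theo.poly.large} (through its passage via surfaces isogenous to a product). Second, I would recall (as in \cite{BCG05}) that $(\ZZ/p\ZZ)^2$ does admit an unmixed Beauville structure of type $(\tau_p,\tau_p)$ for $p \geq 5$: one picks bases yielding two generating triples $(x_1,x_2,-x_1-x_2)$ and $(y_1,y_2,-y_1-y_2)$ such that the three cyclic subgroups generated by the first triple and the three generated by the second are pairwise distinct in $\mathbb{P}^1(\mathbb{F}_p)$, which is possible exactly when there are at least six distinct lines through the origin, i.e.\ $p+1 \geq 6$.

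With these verifications in hand, the conclusion $h(G_p;\tau_p,\tau_p) = \Theta(\chi^2(S_p))$ follows by plugging $r=2$ into the statement of Theorem \ref{theo.poly.large}, since a Beauville surface, being rigid, corresponds to an isolated point in the moduli space and hence each isomorphism class contributes exactly one connected component.

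There is no serious obstacle here: the only substantive content is to recognize the matching of parameters and to check non-vacuity of the hypotheses. The genuine work was done in the proof of Theorem \ref{theo.poly.large}, where the orbit-counting argument of \cite{BC} applied to the $\mathrm{Aut}(G_p) = \GL(r, \mathbb{F}_p)$-action on pairs of spherical triples produces the $\Theta(\chi^r)$ growth.
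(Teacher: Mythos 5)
Your proposal is correct and follows essentially the same route as the paper, which presents this corollary as an immediate specialization of Theorem~\ref{theo.poly.large} to $r=2$ (the underlying count being Lemma~\ref{lem.ZnZ.p}, giving $h = \Theta(N_p) = \Theta(p^4) = \Theta(\chi^2)$ since $\chi = \Theta(p^2)$). Your additional checks of hyperbolicity and of the existence of a Beauville structure on $(\mathbb{Z}/p\mathbb{Z})^2$ for $p \geq 5$ are consistent with what the paper establishes in Theorem~\ref{thm.unmixed.abelian}.
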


\medskip

\textbf{Acknowledgement.} 
The authors are grateful to Fritz Grunewald for inspiring and motivating us to work on this problem together. He is deeply missed.

The authors would like to thank Ingrid Bauer and Fabrizio Catanese for
suggesting the problems, for many useful discussions and for their helpful
suggestions.
We are grateful to Moshe Jarden and Martin Kassabov for interesting
discussions. 
We also would like to thank G. Jones, G. Gonzales-Diez and D. Torres-Teigell 
for pointing out some subtleties in our first draft.

The authors acknowledge the support of the DFG Forschergruppe 790
''Classification of algebraic surfaces and compact complex manifolds''. 
The first author acknowledges the support of the
European Post-Doctoral Institute and the Max-Planck-Institute for
Mathematics in Bonn.

\section{From Geometry to Group Theory and Back}\label{sec.from.geometry}
\subsection{Ramification structures and Hurwitz components}
The study of surfaces isogenous to a higher product is
strictly linked to the study of branched coverings of complex
curves. We shall recall Riemann's existence theorem which translates the 
geometric problem of constructing
branch coverings into a group theoretical problem.

\begin{defin}\label{defn.orbifold.fuchsian}
Let $g',m_1, \dots , m_r$ be positive integers. An \emph{orbifold
surface group} of type $(g' \mid m_1, \dots , m_r)$ is a group
presented as follows:
\begin{multline*}  \Gamma(g' \mid m_1, \dots , m_r):=\langle a_1,b_1, \ldots , a_{g'},b_{g'},c_{1},
\ldots , c_{r} | \\
c^{m_1}_{1}=\dots=c^{m_r}_{r}=\prod_{k=1}^{g'} [a_{k},b_{k}]c_{1}
\cdot \ldots \cdot c_{r} =1 \rangle.
\end{multline*}
If $g'=0$ it is called a \emph{polygonal group}, if $g'=0$ and
$r=3$ it is called a \emph{triangle group}.
\end{defin}
We remark that an orbifold surface group is in particular cases a
\emph{Fuchsian group} (see e.g. \cite{Br} and \cite{LS04}).

The following is a reformulation of {\em Riemann's existence theorem}:

\begin{theo} A finite group $G$ acts as a group of automorphisms on some
compact Riemann
surface $C$ of genus $g$ if and only if there are natural numbers
$g', m_1, \ldots ,
m_r$, and an orbifold homomorphism
\begin{equation}\label{eq.theta}
\theta \colon \Gamma(g' \mid m_1, \dots , m_r)
\rightarrow G
\end{equation}
such that
$ord(\theta(c_i))=m_i$ $\forall i$ and such that the Riemann - Hurwitz relation holds:
\begin{equation}\label{eq.RiemHurw} 2g - 2 = |G|\left(2g'-2 + \sum_{i=1}^r \left(1 -
\frac{1}{m_i}\right)\right).
\end{equation}

\end{theo}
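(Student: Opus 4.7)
The plan is to prove both directions via the standard dictionary between branched Galois covers of compact Riemann surfaces and surjective homomorphisms from the orbifold fundamental group.

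For the forward direction, I would set $C' := C/G$, of genus $g'$, and let $p_1, \ldots, p_r \in C'$ be the branch locus of the projection $\pi \colon C \to C'$, with ramification indices $m_1, \ldots, m_r$. Setting $C'^* := C' \setminus \{p_1, \ldots, p_r\}$, a classical computation gives a presentation of $\pi_1(C'^*)$ with generators $a_1, b_1, \ldots, a_{g'}, b_{g'}, c_1, \ldots, c_r$ (the $c_i$ being small loops around the $p_i$) and the single relation $\prod_k [a_k,b_k] \prod_i c_i = 1$. The restricted covering $C \setminus \pi^{-1}(\{p_1,\ldots,p_r\}) \to C'^*$ is unramified and Galois with deck group $G$, hence corresponds to a surjection $\pi_1(C'^*) \twoheadrightarrow G$. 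The local picture at a preimage of $p_i$, a branch point whose stabilizer has order $m_i$, forces the image of $c_i$ to have order exactly $m_i$, so the surjection factors through $\Gamma(g' \mid m_1, \ldots, m_r)$. Finally, \eqref{eq.RiemHurw} is obtained by counting: there are $|G|/m_i$ preimages above each $p_i$, each contributing $m_i - 1$ to the ramification divisor.

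For the reverse direction, given $\theta$, I would let $K := \ker \theta$, a subgroup of index $|G|$ in $\Gamma := \Gamma(g' \mid m_1, \ldots, m_r)$. Realize $\Gamma$ as a discrete group acting properly discontinuously and cocompactly on its universal cover $X$ (the upper half-plane, the complex plane, or the Riemann sphere, according to the sign of $2g'-2 + \sum(1-1/m_i)$). The hypothesis $\ord(\theta(c_i)) = m_i$ means that $K$ contains no nontrivial power of any $c_i$; since every element of finite order in $\Gamma$ is conjugate to a power of some $c_i$, the subgroup $K$ is torsion-free and therefore acts freely on $X$. Then $C := X/K$ is a compact Riemann surface on which $G \cong \Gamma/K$ acts holomorphically, with $C/G = X/\Gamma = C'$ and ramification data $(m_1, \ldots, m_r)$; the genus of $C$ is then determined by \eqref{eq.RiemHurw}.

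The hard part will be the structural fact used throughout, namely that every element of finite order in $\Gamma(g' \mid m_1, \ldots, m_r)$ is conjugate to a power of one of the elliptic generators $c_i$; this is a nontrivial consequence of the classification of (cocompact) Fuchsian groups and of the fact that the torsion of a free product of cyclic groups lies in a conjugate of a factor. Once this torsion classification and the presentation of the fundamental group of a punctured compact surface are granted, both directions reduce to careful topological and group-theoretic bookkeeping.
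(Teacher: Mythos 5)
The paper offers no proof of this statement: it is quoted verbatim as a reformulation of Riemann's existence theorem (a classical result, available e.g.\ in Breuer's book cited as \cite{Br}), so there is no in-paper argument to compare yours against. Your proposal is the standard and correct proof of that classical theorem: the forward direction via the presentation of $\pi_1$ of the punctured quotient and the cyclicity of point stabilizers, the reverse direction via uniformization of $\Gamma(g'\mid m_1,\dots,m_r)$ and the fact that its torsion is conjugate into the cyclic subgroups $\langle c_i\rangle$, so that the kernel of $\theta$ is torsion-free and acts freely. Two small points you should make explicit if you write this up in full: $\theta$ must be taken to be an \emph{epimorphism} (otherwise in the reverse direction you only get an action of $\mathrm{im}\,\theta$, not of $G$), which is how the paper uses the map in Definition~\ref{def.braid.action}; and the degenerate spherical and Euclidean cases (where $2g'-2+\sum(1-1/m_i)\leq 0$) need the separate, but equally classical, realizations of $\Gamma$ as a finite subgroup of $\mathrm{Aut}(\mathbb{P}^1)$ or as a crystallographic group, as you indicate.
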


If this is the case, then $g'$ is the genus of $C':=C/G$. The
$G$-cover $C \rightarrow
C'$ is branched in $r$ points $p_1, \ldots , p_r$ with branching
indices $m_1, \ldots ,
m_r$, respectively.

We obtain that the datum of a surface isogenous to a higher product
of unmixed type $S=(C_1 \times C_2)/G$ with $q=0$ is determined,
once we look at the monodromy of each covering of $\mathbb{P}^1$, by
the datum of a finite group $G$ together with two respective
disjoint spherical $r_i-$systems of generators $T_1:=(x_1, \dots ,
x_{r_1})$ and $T_2:=(x_1, \dots , x_{r_2})$, such that the types of
the systems satisfy~\eqref{eq.RiemHurw} with $g'=0$ and respectively
$g=g(C_i)$. The condition of being disjoint ensures that the action
of $G$ on the product of the two curves $C_1 \times C_2$ is free. We
remark here that this can be specialized to $r_i=3$, and therefore
can be used to construct Beauville surfaces. This description
gives at once the Definition \ref{def.rami.structure}.
\begin{rem} Note that a group $G$ and an unmixed ramification structure
(or equivalently a Beauville structure) for it determine the main
invariants of the surface $S$. Indeed, as a consequence of the
Zeuthen-Segre formula one has:

\begin{equation}\label{eq.SZ} e(S)=4\frac{(g(C_1)-1)(g(C_2)-1)}{|G|}.
\end{equation}

Hence, by~\eqref{eq.chie} and~\eqref{eq.RiemHurw} we obtain:
\begin{equation}\label{eq.pginfty}
4\chi(S)=4(1+p_g)=|G|\cdot\left({-2+\sum^{r_1}_{k=1}(1-\frac{1}{m_{1,k}}))}\right)
\cdot\left({-2+\sum^{r_2}_{k=1}(1-\frac{1}{m_{2,k}}))}\right),
\end{equation}
\end{rem}
and so, in the Beauville case,
\[
    4\chi(S)=4(1+p_g)=|G|(1-\mu_1)(1-\mu_2),
\]
where
\begin{equation}
\label{eq.RHtre} \mu_i:=
\frac{1}{m_{i,1}}+\frac{1}{m_{i,2}}+\frac{1}{m_{i,3}}, \quad
(i=1,2).
\end{equation}

Now, it is left to verify that indeed $g(C_1)\geq 2$ and $g(C_2)\geq
2$. This follows from Equation~\eqref{eq.RiemHurw} and from the
following Lemma.

\begin{lem}\label{lem.genus.ge.2}
Let $G$ be a finite non-trivial group and $(T_1,T_2)$ a 
spherical unmixed ramification structure of $G$ of size $(r_1,r_2)$, then
\begin{equation}\label{eq.Rim.Hur.Condition.go}
\mathbb{Z} \ni \frac{|G|
(-2+\sum^{r_i}_{l=1}(1-\frac{1}{m_{i,l}}))}{2}+1 \geq 2, \text{  for
$i=1,2$}.
\end{equation}
\end{lem}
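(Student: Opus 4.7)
The plan is to recast the inequality \eqref{eq.Rim.Hur.Condition.go} geometrically: the Riemann--Hurwitz relation \eqref{eq.RiemHurw} with $g'=0$ identifies its left-hand side with $g(C_i)$, where $C_i$ is the Galois cover associated to $T_i$ below. So the claim is equivalent to showing $g(C_i)\geq 2$ for $i=1,2$.

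First, I would apply Riemann's existence theorem to each spherical system $T_i$ to obtain a smooth projective curve $C_i$ together with a Galois $G$-cover $C_i\to\mathbb{P}^1$ branched over $r_i$ points with local monodromies conjugate to $x_{i,1},\dots,x_{i,r_i}$. Next, I would observe that the disjointness condition \eqref{eq.sigmasetcond} is exactly what is needed for the diagonal $G$-action on $C_1\times C_2$ to be free: the non-trivial $G$-stabilizers on $C_i$ are precisely the conjugates of powers of the $x_{i,k}$, so the stabilizer of any $(p,q)\in C_1\times C_2$ is trivial if and only if $\Sigma(T_1)\cap\Sigma(T_2)=\{1\}$. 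Consequently $S:=(C_1\times C_2)/G$ is a smooth projective surface, and by \eqref{eq.irregularity} it has irregularity $q(S)=0$, since each $C_i/G\cong\mathbb{P}^1$.

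The next step is to combine the Zeuthen--Segre formula \eqref{eq.SZ}, which gives $|G|\cdot e(S)=4(g(C_1)-1)(g(C_2)-1)$, with the observation that $q(S)=0$ forces $b_1(S)=0$, so that $e(S)=2+b_2(S)\geq 2>0$. Thus $(g(C_1)-1)(g(C_2)-1)>0$; this immediately excludes $g(C_i)=1$ for either index (the product would vanish) as well as the mixed case with one $g(C_i)=0$ and the other $\geq 2$ (the product would be negative).

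The only remaining possibility is $g(C_1)=g(C_2)=0$, i.e.\ $C_1\times C_2\cong\mathbb{P}^1\times\mathbb{P}^1$, which is the subtle point and the main obstacle. Here I would invoke the topological fact that $\mathbb{P}^1\times\mathbb{P}^1$ is simply connected, so no non-trivial finite group can act freely on it; this contradicts $G\neq 1$. It follows that $g(C_i)\geq 2$ for both $i$, which via \eqref{eq.RiemHurw} is precisely \eqref{eq.Rim.Hur.Condition.go}. Everything else amounts to straightforward translation between the group-theoretic and geometric pictures.
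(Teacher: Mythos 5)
Your reduction of \eqref{eq.Rim.Hur.Condition.go} to $g(C_i)\ge 2$ is the right move, and the first part of your argument is sound: the identity $|G|\,e(S)=4(g(C_1)-1)(g(C_2)-1)$ combined with $b_1(S)=2q(S)=0$, hence $e(S)=2+b_2(S)>0$, disposes of $g(C_i)=1$ and of the mixed case $\{0,\ge 2\}$ in one stroke (the paper's geometric proof handles $g=1$ by essentially the same Euler-number computation, via $4\chi(S)=e(S)$ and $\chi(S)=1+p_g>0$, and treats $g=0$ separately through the classification of ruled surfaces with $q=0$ and the list of surfaces with $p_g=q=0$ isogenous to a product from \cite{BCG08}). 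The problem is your last step. The assertion that ``$\mathbb{P}^1\times\mathbb{P}^1$ is simply connected, so no non-trivial finite group can act freely on it'' is not a valid principle: free finite group actions on simply connected manifolds abound (the antipodal involution on $S^2$, the Enriques involution on a $K3$ surface, and even $(x,y)\mapsto(-y,x)$ on $S^2\times S^2$, which generates a free $\mathbb{Z}/4\mathbb{Z}$-action on the very space at issue). Freeness plus simple connectivity only tells you that $\pi_1$ of the quotient is $G$; it produces no contradiction. So the case $g(C_1)=g(C_2)=0$, which you correctly identify as the crux, is not actually handled.

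The gap is real but easy to close, provided you use a holomorphic or group-theoretic input rather than a purely topological one. Any of the following works: (a) multiplicativity of $\chi(\mathcal{O})$ under free quotients gives $1=\chi(\mathcal{O}_{\mathbb{P}^1\times\mathbb{P}^1})=|G|\cdot\chi(\mathcal{O}_S)$ with $\chi(\mathcal{O}_S)=1+p_g\ge 1$, forcing $|G|=1$; (b) every non-trivial element of finite order in $\mathrm{PGL}(2,\mathbb{C})$ fixes a point of $\mathbb{P}^1$, so if $g(C_1)=0$ the union of the point-stabilizers of the faithful $G$-action on $C_1$, which is exactly $\Sigma(T_1)$, equals all of $G$; since $\Sigma(T_2)$ contains the non-trivial elements of $T_2$, disjointness $\Sigma(T_1)\cap\Sigma(T_2)=\{1\}$ already forces $G=\{1\}$ (this even makes your Euler-number step unnecessary for the genus-zero case); or (c) follow the paper and invoke the classification of \cite{BCG08}. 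With any of these substituted for the simple-connectivity claim, your proof is complete and is essentially a streamlined version of the paper's geometric argument.
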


The fact that the number in~\eqref{eq.Rim.Hur.Condition.go} is an
integer follows from Riemann's existence theorem. We need to prove
that this integer is at least $2$, namely that
$\sum^{r_i}_{l=1}(1-\frac{1}{m_{i,l}}) > 2$ for $i=1,2$. We shall
give two proofs for this fact, a geometric one and a group theoretic
one (in Section~\ref{sect.extension}), 
both are based on results of Bauer, Catanese and Grunewald.

\begin{proof}[Geometrical proof]
Let $S=(C_1 \times C_2) / G$ be a surface isogenous to a product
with $q(S)=0$, notice first that  $g(C_1) \neq 1$.

Indeed, suppose that $g(C_1)=1$, then $S \rightarrow C_2/G \cong
\mathbb{P}^1$ is an elliptic fibration with fibre isomorphic to
$C_1$ or to a multiple of $C_1$. Since $C_1$ is an elliptic curve,
the Zeuthen-Segre Theorem holds in the following form:
\[
e(S)=4\bigl(g(C_1)-1\bigr)\bigl(g(C_2/G)-1\bigr)=0.
\]
Since $S$ is isogenous to a product $4\chi(S)=e(S)=0$, but we have
$\chi(S)=1+p_g-q=1+p_g>0$. Hence $g(C_1) \neq 1$.

Second, suppose that $S$ is a $\mathbb{P}^1$-bundle. Then $S$ cannot
be non-rational, because non-rational ruled surfaces have $q>0$.
Hence $S$ must be rational. If $S$ is rational then $p_g=0$, and
surfaces with $p_g=q=0$ isogenous to a product were classified by
Bauer-Catanese-Grunewald in~\cite{BCG08}, and the only rational one
is $S=\mathbb{P}^1 \times \mathbb{P}^1$, therefore $G$ is trivial and
this case is also excluded.
\end{proof}


Let $S$ be a surface isogenous to a higher product of unmixed type with $q=0$,
and with group $G$ and a pair of two disjoint spherical $r_i-$systems of generators of types
$(\tau_1,\tau_2)$. By~$\eqref{eq.pginfty}$ we have
$\chi(S)=\chi(G,(\tau_1,\tau_2))$, and consequentially,
by~\eqref{eq.chie}, $K^2_S=K^2(G,(\tau_1,\tau_2))=8\chi(S)$.

Let us fix a group $G$ and a pair of unmixed ramification types
$(\tau_1,\tau_2)$, and denote by $\mathcal{M}_{(G,(\tau_1,\tau_2))}$
the moduli space of isomorphism classes of surfaces isogenous to a product
admitting these data, by
\cite{Cat00} the space $\mathcal{M}_{(G,(\tau_1,\tau_2))}$ consists
of a finite number of connected components. Indeed, there is a group
theoretical procedure to count these components, which is described in
\cite{BC}.

\begin{defin} \label{def.braid.action}
The braid group of the sphere
$\mathbf{B}_r:=\pi_0(Diff(\mathbb{P}^1-\{p_1, \dots, p_r\}))$
operates on the epimorphism $\theta$ defined in~\eqref{eq.theta}:
\[
\pi_1(\mathbb{P}^1 - \{p_1, \dots,p_r\})/ \langle \gamma^{m_1},
\dots ,\gamma^{m_r}\rangle \cong \Gamma:=\Gamma(0 \mid m_1, \dots ,
m_r) \stackrel{\theta}{\longrightarrow} G.
\]
Indeed, if $\sigma \in \mathbf{B}_r$ then the operation is given
by $\theta \circ \sigma$. The orbits of this action are called
\emph{Hurwitz equivalence classes} of the spherical systems of
generators.
\end{defin}

\begin{defin}
Let $G$ be a finite group, let $r \geq 3$ and $2 \leq m_1 \leq m_2 \leq \dots \leq m_r$ be integers.
Assume that $T=(x_1,\ldots,x_r)$ is a spherical $r-$system of generators of $G$.
\begin{itemize}
\item We say that $T$ has an \emph{unordered type} $\tau$ if the
orders of $(x_1,\dots,x_r)$ are $(m_1, \dots ,m_r)$ up to a
permutation, namely, if there is a permutation $\pi \in \mathfrak{S}_r$ such
that
\[
    \ord(x_1) = m_{\pi(1)},\dots,\ord(x_r)= m_{\pi(r)}.
\]

\item We shall denote:
\[ \mathcal{S}(G,\tau):=\{\textrm{spherical $r-$systems for } G \textrm{ of type } \tau\}.
\]
\end{itemize}
\end{defin}

\begin{notation}
Let $(T_1,T_2)$ be a pair of disjoint spherical $r_i-$systems of
generators of type $(\tau_1,\tau_2)$, we call the pair $(T_1,T_2)$
\emph{unordered} if $T_1$ and $T_2$ have unordered types $\tau_1$
and $\tau_2$ respectively.

We shall denote by $\mathcal{U}(G;\tau_1,\tau_2)$ the set of all
unordered pairs $(T_1,T_2)$ of disjoint spherical $r_i-$systems of
generators of type $(\tau_1,\tau_2)$.
\end{notation}

\begin{theo}\cite[Theorem 1.3]{BC}.
Let $S$ be a surface isogenous to a higher product
of unmixed type and with $q=0$.
Then to $S$ we attach its finite group $G$ (up to isomorphism) and
the equivalence classes of an unordered pair of disjoint spherical
systems of generators $(T_1,T_2)$ of  $G$, under the equivalence
relation generated by:
\begin{enumerate}\renewcommand{\theenumi}{\it \roman{enumi}}
\item  Hurwitz equivalence for $T_1$;

\item  Hurwitz equivalence for $T_2$;

\item Simultaneous conjugation for $T_1$ and $T_2$, i.e., for
$\phi \in \Aut(G)$ we let $\bigl(T_1:=(x_{1,1},\dots , x_{r_1,1}),
\ \ \ T_2:=(x_{1,2},\dots, x_{r_2,2})\bigr)$ be equivalent to
\[\bigl(\phi(T_1):=(\phi(x_{1,1}),\dots ,\phi( x_{r_1,1})),
\ \ \phi(T_2):=(\phi(x_{1,2}),\dots, \phi(x_{r_2,2}))\bigr).
\]
\end{enumerate}
Then two surfaces $S$, $S'$ are deformation equivalent if and only
if the corresponding equivalence classes of pairs of spherical
generating systems of $G$ are the same.
\end{theo}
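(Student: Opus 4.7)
The plan is to translate the geometric classification of surfaces isogenous to a higher product of unmixed type with $q=0$ into the combinatorial data of pairs of spherical generating systems, using Riemann's existence theorem together with the uniqueness of the minimal realization established in \cite{Cat00}, and then to reduce deformation equivalence to orbits under the Hurwitz and simultaneous-conjugation actions.

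First, I would start with $S = (C_1 \times C_2)/G$ in its (essentially unique) minimal realization. Each projection $C_i \to C_i/G \cong \mathbb{P}^1$ is a Galois cover branched over $r_i$ points; after choosing a geometric basis of $\pi_1(\mathbb{P}^1 \setminus \{p_{i,1},\ldots,p_{i,r_i}\})$ and an identification of $\mathrm{Gal}(C_i/\mathbb{P}^1)$ with $G$, Riemann's existence theorem yields a spherical system $T_i \in \mathcal{S}(G,\tau_i)$. The freeness of the $G$-action on $C_1 \times C_2$ is then equivalent to the disjointness condition $\Sigma(T_1) \cap \Sigma(T_2) = \{1\}$ of Definition \ref{defn.sphergen}, since the stabilizer of a ramification point over $p_{i,k}$ in $C_i$ is a conjugate of a power of $x_{i,k}$, and stabilizers from the two factors must intersect trivially for the diagonal action on the product to be free.

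Second, I would track the ambiguities in this construction. Three independent choices were made for each factor: (a) a geometric basis of the punctured sphere, (b) a labelling of the $r_i$ branch points, and (c) the identification of the Galois group with $G$. Moves (a) and (b) together generate the action of the spherical braid group $\mathbf{B}_{r_i}$ on the tuple $T_i$, yielding the equivalences (i) and (ii), which can be performed independently on the two factors. A change in (c) on a single factor would amount to conjugating by an automorphism of $G$; however, when one passes to an isomorphic surface $S'$, the uniqueness of the minimal realization forces the induced lifts $C_i \cong C'_i$ to intertwine the two $G$-actions by a \emph{single} automorphism $\alpha \in \Aut(G)$, which therefore acts diagonally on the pair $(T_1, T_2)$, producing exactly equivalence (iii).

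Finally, to promote this from an isomorphism statement to a deformation statement, I would invoke the standard Hurwitz space picture. The configuration space $U_{r_i}$ of unordered $r_i$-tuples of distinct points in $\mathbb{P}^1$ is connected, and the Hurwitz space of branched $G$-covers with fixed type $\tau_i$ is an \'etale cover of $U_{r_i}$ whose sheets are indexed precisely by Hurwitz-orbits of spherical generating systems. Consequently $\mathcal{M}_{(G,(\tau_1,\tau_2))}$ fibres over the quotient of the product of these two Hurwitz spaces by the diagonal $\Aut(G)$-action, with connected fibres coming from $G$-equivariant deformations of $(C_1,C_2)$. The main obstacle is precisely this last step: verifying that no additional equivalences among the pairs $(T_1,T_2)$ are required, i.e.\ that deformation equivalence is generated by exactly (i)--(iii). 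This amounts to carefully using the uniqueness of the minimal realization from \cite{Cat00} so that any automorphism of $G$ arising from a deformation of $S$ must act simultaneously on both factors, together with the connectedness of each Hurwitz sheet to exclude unseen geometric identifications within a fixed orbit.
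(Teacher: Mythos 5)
This statement is quoted verbatim from Bauer--Catanese \cite[Theorem 1.3]{BC}; the paper gives no proof of it, only the citation, so there is no internal argument to compare yours against. Judged against the argument in the literature (\cite[Theorem 4.14]{Cat00} together with \cite{BC}), your sketch follows the standard route faithfully: Riemann's existence theorem converts each cover $C_i \to \mathbb{P}^1$ into a spherical system, freeness of the diagonal action is equivalent to disjointness of $\Sigma(T_1)$ and $\Sigma(T_2)$, the choice of geometric basis is absorbed by the braid group action, and the uniqueness of the minimal realization forces a single automorphism of $G$ acting diagonally, yielding equivalences (i)--(iii).

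Three points need more care than you give them. First, for the ``only if'' direction you must know that the property of being isogenous to a higher product of the given unmixed type, with the given group and types, is both open and closed in families --- this is the real content of Catanese's Theorem 4.14 in \cite{Cat00}, and your Hurwitz-space fibration argument tacitly presupposes it; without it a deformation of $S$ could a priori leave the class being parametrized. Second, in the unmixed case with $\tau_1=\tau_2$ an isomorphism $S \cong S'$ may interchange the two factors, which is why the theorem speaks of an \emph{unordered} pair and why Definition \ref{def.Hur.comp} later includes the wreath product with $\ZZ/2\ZZ$; your list of ambiguities (a)--(c) accounts only for the diagonal $\Aut(G)$-action and omits this swap. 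Third, the paper itself points out (via Lemma \ref{lem.Vol}, that inner automorphisms preserve braid orbits) that the original statement in \cite{BC} was formulated for abelian $G$ and that an extra argument is needed to apply it to non-abelian groups; a complete proof in the generality claimed here would have to address that, which your sketch does not.
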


Recall that

\begin{lem}\cite[Lemma 9.4]{Vol}.\label{lem.Vol} 
The inner automorphism group, $\Inn (G)$, leaves each braid orbit invariant.
\end{lem}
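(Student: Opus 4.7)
The plan is to show that for every $g \in G$ there is a braid $\beta_g \in \mathbf{B}_r$ with
\[
    \beta_g \cdot T = (g x_1 g^{-1}, \ldots, g x_r g^{-1}),
\]
which is precisely the statement that simultaneous conjugation preserves each Hurwitz orbit.

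The first step is to exhibit a braid that enacts conjugation by a single entry. With the Hurwitz action convention $\sigma_i \cdot (\ldots, y_i, y_{i+1}, \ldots) = (\ldots, y_{i+1}, y_{i+1}^{-1} y_i y_{i+1}, \ldots)$, I would consider
\[
    \eta := (\sigma_1 \sigma_2 \cdots \sigma_{r-1})(\sigma_{r-1} \cdots \sigma_2 \sigma_1) \in \mathbf{B}_r,
\]
and verify by a short inductive calculation (repeatedly using the spherical relation $y_1 y_2 \cdots y_r = 1$) that
\[
    \eta \cdot (y_1, \ldots, y_r) = (y_1,\; y_1^{-1} y_2 y_1,\; \ldots,\; y_1^{-1} y_r y_1),
\]
i.e.\ $\eta$ acts on every spherical tuple as simultaneous conjugation by the inverse of its first entry. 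Using the cyclic-shift braid $\alpha := \sigma_1 \cdots \sigma_{r-1}$, which satisfies $\alpha \cdot (y_1, \ldots, y_r) = (y_2, \ldots, y_r, y_1)$ on spherical tuples by a similar computation, I then set $\eta_i := \alpha^{i-1}\, \eta\, \alpha^{-(i-1)}$; by conjugacy, $\eta_i$ acts on any spherical tuple as simultaneous conjugation by the inverse of its $i$-th entry, for every $i \in \{1, \ldots, r\}$.

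The second step is to compose these moves. For any sequence of indices $i_1, \ldots, i_m \in \{1, \ldots, r\}$, applying $\eta_{i_1}, \eta_{i_2}, \ldots, \eta_{i_m}$ in succession to $T$ produces the tuple $T$ simultaneously conjugated by $(x_{i_m} x_{i_{m-1}} \cdots x_{i_1})^{-1}$; this is verified by induction on $m$. Since $T$ generates the finite group $G$, every $g \in G$ can be written as such a product (negative exponents are avoided using $x_i^{-1} = x_i^{\ord(x_i)-1}$), and the corresponding composite braid $\beta_g$ then satisfies $\beta_g \cdot T = g T g^{-1}$, as required.

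The main subtle point, which I expect to be the principal obstacle, is the bookkeeping in step two: after each move the tuple changes, so the ``generator at position $i$'' is no longer $x_i$ but some conjugate of it. Careful tracking of the accumulated conjugating element—namely, that after applying $\eta_{i_1}$ the $i_2$-th entry has become $x_{i_1}^{-1} x_{i_2} x_{i_1}$, so that applying $\eta_{i_2}$ conjugates by $x_{i_1}^{-1} x_{i_2}^{-1} x_{i_1}$ and hence the total conjugator becomes $(x_{i_2} x_{i_1})^{-1}$, and so on—yields the telescoping identity above and shows that the set of achievable conjugators exhausts all of $G$, proving that $\Inn(G)$ preserves every braid orbit.
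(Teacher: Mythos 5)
Your proof is correct and is essentially the standard argument for this fact: the paper itself gives no proof of Lemma~\ref{lem.Vol}, only the citation to V\"olklein, where the result is established by exactly this kind of braid computation (a word in the $\sigma_i$ realizing simultaneous conjugation by one entry, then composition over a generating set). The only cosmetic discrepancy is that your convention $\sigma_i\colon (y_i,y_{i+1})\mapsto (y_{i+1},y_{i+1}^{-1}y_iy_{i+1})$ is the inverse of the one used in Section~\ref{subsec.braid.action}, which is harmless since a generator and its inverse produce the same orbits.
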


This Lemma allows us to use the above Theorem of Bauer and Catanese also for non-abelian groups, 
although the original statement was given only for abelian groups.

Once we fix a finite group $G$ and a pair of types
$(\tau_1,\tau_2)$ (of size ($r_1$,$r_2$)) of an unmixed ramification
structure for $G$, counting the number of connected components of
$\mathcal{M}_{(G,(\tau_1,\tau_2))}$ is then equivalent to the group
theoretical problem of counting the number of classes of pairs of
spherical systems of generators of $G$ of type $(\tau_1,\tau_2)$
under the equivalence relation given in the following definition
(see e.g.~\cite[\S 1.1]{BCG08}).

\begin{defin}\label{def.Hur.comp}
Denote by $h(G;\tau_1,\tau_2)$ the number of \emph{Hurwitz
components}, namely the number of orbits of
$\mathcal{U}(G;\tau_1,\tau_2)$ under the following actions:
\begin{description}
\item[if $\tau_1 \neq \tau_2$] the action of
$(\mathbf{B}_{r_1} \times \mathbf{B}_{r_2}) \times \Aut(G)$, given
by:
\[
    ((\gamma_1, \gamma_2), \phi) \cdot (T_1, T_2) := \bigr(\phi(\gamma_1(T_1)),
    \phi(\gamma_2(T_2))\bigl),
\]
where $\gamma_1 \in \mathbf{B}_{r_1}$, $\gamma_2 \in
\mathbf{B}_{r_2}$, $\phi \in \Aut(G)$ and $(T_1,T_2) \in
\mathcal{U}(G;\tau_1,\tau_2)$.

\item[if $\tau_1=\tau_2$] the action of
$(\mathbf{B}_{r} \wr \ZZ/2\ZZ) \times \Aut(G)$, 
where $\ZZ/2\ZZ$ acts on $(T_1,T_2)$ by exchanging the two factors.
\end{description}
\end{defin}

In case of Beauville surfaces we define $h$ as above substituting
$r_1$ and $r_2$ with $3$.

\subsection{Counting connected components in the moduli space}\label{sec.count.moduli}
In this Section we prove Theorems~\ref{thm.moduli.An}(b),
\ref{thm.moduli.Sn}(b) and ~\ref{theo.poly.large}.
We start by proving a more general statement regarding families of finite groups.

\begin{prop}\label{prop.asy.comp} Fix $r_1$ and $r_2$ in $\mathbb{N}$.
Let $\{G_n\}^{\infty}_{n=1}$ be a family of finite groups, which
admit an unmixed ramification structure of size $(r_1,r_2)$. Let
$\tau_{n,1} = (m_{n,1,1},\dots,m_{n,1,r_1})$ and $\tau_{n,2} =
(m_{n,2,1},\dots,m_{n,2,r_2})$ be sequences of types
$(\tau_{n,1},\tau_{n,2})$ of unmixed ramification structures for
$G_n$, and $\{S_n\}^{\infty}_{n=1}$ be the family of surfaces
isogenous to higher product with $q=0$ admitting the given data,
then as $|G_n| \stackrel{n \rightarrow \infty}{\longrightarrow}
\infty$ :
\begin{enumerate}\renewcommand{\theenumi}{\it \roman{enumi}}
    \item $\chi(S_n) = \Theta(|G_n|)$.
    \item $h(G_n; \tau_{n,1}, \tau_{n,2}) = O(\chi(S_n)^{r_1+r_2-2})$.
\end{enumerate}
\end{prop}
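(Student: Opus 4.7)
My plan is to prove both statements directly from the explicit formula \eqref{eq.pginfty} together with an elementary counting bound on $\mathcal{U}(G_n;\tau_{n,1},\tau_{n,2})$. Setting
\[
    A_{n,i}\;:=\;-2+\sum_{k=1}^{r_i}\!\left(1-\tfrac{1}{m_{n,i,k}}\right),
\]
formula \eqref{eq.pginfty} reads $4\chi(S_n)=|G_n|\cdot A_{n,1}\cdot A_{n,2}$, so part \emph{(i)} reduces to showing that each $A_{n,i}$ is pinched between two positive constants depending only on $r_i$. The upper bound $A_{n,i}<r_i-2$ is immediate from $1-1/m<1$; the lower bound $A_{n,i}>0$ follows from Lemma \ref{lem.genus.ge.2}, but a \emph{uniform} positive lower bound $A_{n,i}\geq\delta_{r_i}>0$ requires showing that the rational numbers of the form $-2+\sum_{k=1}^{r_i}(1-1/m_k)$, with integer $m_k\ge 2$, cannot accumulate at $0^+$ for fixed $r_i$.

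To obtain this uniform lower bound I would argue by contradiction and a pigeonhole/limit argument. Suppose $A_{n,i}\to 0^+$ along a subsequence; extracting a further subsequence, I would partition $\{1,\dots,r_i\}$ into a set $I$ where $m_{n,i,k}$ is eventually constant with value $\mu_k$, and a set $J$ where $m_{n,i,k}\to\infty$. Passing to the limit in the sum then forces $\sum_{k\in I}(1-1/\mu_k)+|J|=2$, and a small case analysis on $|J|\in\{0,1,\dots,r_i\}$ contradicts either $r_i\ge 3$, or the bound $1-1/\mu_k\ge 1/2$ on the individual terms, or the strict positivity $A_{n,i}>0$ along the sequence. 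Once $A_{n,i}\in[\delta_{r_i},r_i-2]$ is established, part \emph{(i)} is immediate.

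For part \emph{(ii)} I would use the trivial bound $h(G_n;\tau_{n,1},\tau_{n,2})\le|\mathcal{U}(G_n;\tau_{n,1},\tau_{n,2})|$, valid since $h$ counts orbits of this finite set under the action of Definition \ref{def.Hur.comp}. Any spherical $r_i$-system $(x_1,\dots,x_{r_i})$ of $G_n$ is determined by its first $r_i-1$ entries via $x_{r_i}=(x_1\cdots x_{r_i-1})^{-1}$, so $|\mathcal{S}(G_n,\tau_{n,i})|\le|G_n|^{r_i-1}$; the passage to unordered pairs and unordered types only introduces a combinatorial constant depending on $r_1,r_2$. Hence $|\mathcal{U}(G_n;\tau_{n,1},\tau_{n,2})|\le c\cdot|G_n|^{r_1+r_2-2}$ and, combining with part \emph{(i)}, $h(G_n;\tau_{n,1},\tau_{n,2})=O(\chi(S_n)^{r_1+r_2-2})$. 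The main obstacle is the uniformity issue in part \emph{(i)}; everything else is formal bookkeeping, and in particular no use of the finer structure (braid orbits, $\Aut(G_n)$-action, or $\Inn(G_n)$ quotienting) is needed for the upper bound.
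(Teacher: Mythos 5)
Your proof is correct and follows the same overall strategy as the paper: part \emph{(i)} from formula \eqref{eq.pginfty} by pinching $A_{n,i}$ between positive constants, and part \emph{(ii)} from the trivial bound $h\le|\mathcal{U}(G_n;\tau_{n,1},\tau_{n,2})|\le c\,|G_n|^{r_1+r_2-2}$ (a spherical system being determined by its first $r_i-1$ entries), combined with \emph{(i)}. The only genuine divergence is in how the uniform positive lower bound on $A_{n,i}$ is obtained. The paper simply quotes the explicit extremal values: for $r_i=3$ the minimal positive value of $1-\mu_i$ is $1/42$ (attained at the $(2,3,7)$ triple), for $r_i=4$ it is $1/6$ (attained at $(2,2,2,3)$), and for $r_i\ge 5$ one has $A_{n,i}\ge r_i/2-2\ge 1/2$; this yields the effective two-sided bound $|G_n|/(4\cdot 42^2)\le\chi(S_n)\le (r_1-2)(r_2-2)|G_n|/4$. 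You instead run a compactness/subsequence argument to show the positive values of $-2+\sum_k(1-1/m_k)$ cannot accumulate at $0^+$ for fixed $r_i$; your case analysis on $|J|$ is sound (note that the case $|J|=1$ forces $A_{n,i}=-1/m_{n,i,j}<0$ eventually, and $|J|=2$ forces $I=\emptyset$, i.e.\ $r_i=2$), so the argument closes, but it produces a non-effective constant $\delta_{r_i}$ where the paper's version gives $1/42$ explicitly. Both establish $\chi(S_n)=\Theta(|G_n|)$; the explicit route is shorter and gives usable constants, while yours generalizes without needing to know the extremal hyperbolic types.
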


\begin{proof}
\begin{enumerate}\renewcommand{\theenumi}{\it \roman{enumi}}
\item  Note that, for $i=1,2$,
\[
  \frac{1}{42} \leq -2 + \sum_{j=1}^{r_i} \bigl( 1-\frac{1}{m_{n,i,j}}
  \bigr) \leq r_i-2.
\]
Indeed, for $r_i=3$, the minimal value for $(1-\mu_i)$ is $1/42$.
For $r_i=4$, the minimal value for $\bigl( -2 + \sum_{j=1}^{r_i}
\bigl( 1-\frac{1}{m_{n,i,j}} \bigr) \bigr)$ is $1/6$, and when
$r_i \geq 5$, this value is at least $1/2$.

Now, by Equation~\eqref{eq.pginfty},
\[
    4\chi(S_n) = |G_n| \cdot \left(-2 + \sum_{j=1}^{r_1} \bigl( 1-\frac{1}{m_{n,1,j}}
  \bigr)\right) \cdot \left( -2 + \sum_{j=1}^{r_2} \bigl( 1-\frac{1}{m_{n,2,j}}
  \bigr) \right),
\]
hence
\[
    \frac{|G_n|}{4 \cdot 42^2} \leq \chi(S_n) \leq \frac{(r_1-2)(r_2-2)|G_n|}{4}.
\]

\item For $i=1,2$, any spherical $r_i-$system
of generators $T_{n,i}$ contains at most $r_i-1$ independent elements of $G_n$.
Thus, the size of the set of all unordered pairs of type
$(\tau_{n,1},\tau_{n,2})$ is bounded from above, by
\[
    |\mathcal{U}(G_n;\tau_{n,1},\tau_{n,2})| \leq |G_n|^{r_1+r_2-2},
\]
and so, the number of connected components is bounded from above
by
\[
    h(G_n;\tau_{n,1},\tau_{n,2}) \leq |G_n|^{r_1+r_2-2}.
\]
Now, the result follows from (\textit{i}).
\end{enumerate}
\end{proof}

By taking $r_1=r_3=3$ we get the following Corollary.

\begin{cor}\label{cor.asy.beau}
Let $\{G_n\}^{\infty}_{n=1}$ be a family of finite groups, which
admit an unmixed Beauville structure. Let $\tau_{n,1} =
(m_{n,1,1},m_{n,1,2},m_{n,1,3})$ and $\tau_{n,2} =
(m_{n,2,1},m_{n,2,2},m_{n,2,3})$ be sequences of types
$(\tau_{n,1},\tau_{n,2})$ of unmixed Beauville structures for $G_n$,
and let $\{S_n\}^{\infty}_{n=1}$ be the family of Beauville surfaces
admitting the given data, then as $|G_n| \stackrel{n \rightarrow
\infty}{\longrightarrow} \infty$ :
\begin{enumerate}\renewcommand{\theenumi}{\it \roman{enumi}}
    \item $\chi(S_n) = \Theta(|G_n|)$.
    \item $h(G_n;\tau_{n,1},\tau_{n,2}) = O(\chi(S_n)^{4})$.
\end{enumerate}
\end{cor}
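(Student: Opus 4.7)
The plan is to obtain this corollary as a direct specialization of Proposition \ref{prop.asy.comp} to the Beauville case. Recall from Definition \ref{def.rami.structure} that an unmixed Beauville structure is precisely an unmixed ramification structure of size $(r_1, r_2) = (3, 3)$. Consequently, the family $\{G_n\}$ of the corollary is in particular a family of finite groups admitting unmixed ramification structures of size $(3, 3)$, and the associated Beauville surfaces $\{S_n\}$ form a subclass of the surfaces isogenous to a product with $q=0$ considered in the proposition.

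Part (i) is therefore immediate: Proposition \ref{prop.asy.comp}(i) gives $\chi(S_n) = \Theta(|G_n|)$ with no assumption on $r_1, r_2$ other than that they are fixed. In fact, the explicit two-sided bound appearing in the proof of the proposition,
\[
\frac{|G_n|}{4 \cdot 42^2} \leq \chi(S_n) \leq \frac{(r_1-2)(r_2-2)|G_n|}{4},
\]
is even sharper when $r_1 = r_2 = 3$, since the upper constant becomes $\tfrac{1}{4}$; note the use of the optimal bound $(1-\mu_i) \geq 1/42$ for hyperbolic triples, which is precisely the content needed for $r_i = 3$.

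For part (ii) I would simply substitute $r_1 = r_2 = 3$ into Proposition \ref{prop.asy.comp}(ii), which yields
\[
h(G_n; \tau_{n,1}, \tau_{n,2}) = O\bigl(\chi(S_n)^{r_1 + r_2 - 2}\bigr) = O\bigl(\chi(S_n)^4\bigr),
\]
as desired. Since the specialization is purely formal, there is no genuine obstacle in this proof; the work has all been done in Proposition \ref{prop.asy.comp}, and the corollary is stated separately only to highlight the concrete exponent $4$ in the Beauville case, which is the natural counterpart to the lower bound $\Omega(n^6)$ and the sharp bound $\Theta(\chi^r)$ appearing in Theorems \ref{thm.moduli.An}--\ref{theo.poly.large}.
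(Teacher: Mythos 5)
Your proposal is correct and coincides with the paper's own proof: the corollary is obtained exactly by specializing Proposition~\ref{prop.asy.comp} to $r_1=r_2=3$ (the paper introduces it with precisely this remark), and your observations about the constants $1/42$ and $(r_1-2)(r_2-2)/4$ match the bounds in the proposition's proof.
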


With the calculation done in this paper we can give a more
accurate description of the asymptotic growth of $h$ in case of
Beauville surfaces and surfaces isogenous to a higher product with
$q=0$, for certain families of finite groups.

\begin{rem}\label{rem.PSL2p}
Let $\tau_{1}=(r,r,r)$ and $\tau_{2}=(s,s,s)$ be two hyperbolic
types, where $r$ and $s$ are two distinct primes which are strictly
larger than $5$. By \cite{GP}, there exist
infinitely many primes $p$ for which the group $\PSL(2,p)$ admits an
unmixed Beauville structure of type $(\tau_1,\tau_2)$.

Let us consider the corresponding Beauville surfaces $S_p$, then by
Proposition~\ref{prop.asy.comp}, as $p \rightarrow \infty$:
 \[ \chi(S_p) = \Theta(p^3),
\]
while, by Theorem~\ref{thm.moduli.PSLp} we have
\[
    h(\PSL(2,p),\tau_{1},\tau_{2}) <c,
\]
where $c=c(\tau_1,\tau_2)$ is a constant which depends only on the
types and not on $p$. Namely, there exists an infinite family of
surfaces for which $h$ remains bounded while $\chi$ grows to
infinity.
\end{rem}

On the other hand, when considering the groups $\mathfrak{A}_n$ and $\mathfrak{S}_n$ one
obtains the following lower bound.

\begin{proof}[Proof of Theorem \ref{thm.moduli.An}(b) and \ref{thm.moduli.Sn}(b)]
Let $\{S_n\}$ be a
family of surfaces isogenous to a higher product
with $q=0$, with group either $\mathfrak{A}_n$ or $\mathfrak{S}_n$, and $\tau_1$
and $\tau_2$ are two types which satisfy the assumptions of the Theorems.
Then by
Proposition~\ref{prop.asy.comp}, as $n \rightarrow \infty$:
 \[
    \chi(S_n) = \Theta(n!),
\]
while, by Theorems~\ref{thm.moduli.An}(a)
and~\ref{thm.moduli.Sn}(a),
\begin{align*}
    h(\mathfrak{A}_n,\tau_{1},\tau_{2}) &= \Omega(n^{r_1+r_2}), \text{ and } \\
    h(\mathfrak{S}_n,\tau_{1},\tau_{2}) &= \Omega(n^{r_1+r_2}).
\end{align*}
Therefore,
\begin{align*}
    h(\mathfrak{A}_n,\tau_{1},\tau_{2}) &= \Omega\bigl(\bigl(\log(\chi(S_n))\bigr)^{r_1+r_2-\epsilon}\bigr)
    \text{ and }\\
    h(\mathfrak{S}_n,\tau_{1},\tau_{2}) &=
    \Omega\bigl(\bigl(\log(\chi(S_n))\bigr)^{r_1+r_2-\epsilon}\bigr).
\end{align*}

where $0< \epsilon \in \mathbb{R}$.
\end{proof}

For abelian groups one can moreover obtain both a lower and an upper bound.

\begin{proof}[Proof of Theorem~\ref{theo.poly.large}]
Consider the family $\{ S_p \}$ of surfaces isogenous to a higher product
with $q=0$, where $p$ is prime, admitting type $\tau_p =
(p,\dots, p)$ ($p$ appears $(r+1)-$times) and group
$G_p:=(\mathbb{Z}/p \mathbb{Z})^r$, then by Proposition
\ref{prop.asy.comp}, we have as $p \rightarrow \infty$:
 \[ \chi(S_p) = \Theta(p^r),
 \]
 while by Proposition \ref{prop.ZpZ.mult},
 \[ h(G_p;\tau_p,\tau_p)=\Theta(p^{r^2}).
 \]
Therefore,
\[
h(G_p;\tau_p,\tau_p)=\Theta(\chi^{r}(S_p)).
\]
\end{proof}

\section{Finite Groups, Ramification Structures and Hurwitz Components}
\subsection{Braid group actions}\label{subsec.braid.action}
Recall that the \emph{braid group} $\mathbf{B}_r$ on $r$ strands
can be presented as
\[
    \mathbf{B}_r = \langle \sigma_1,\dots,\sigma_{r-1} | \sigma_i \sigma_{i+1}
    \sigma_i = \sigma_{i+1} \sigma_i\sigma_{i+1}, \sigma_i \sigma_j =
    \sigma_j \sigma_i \text{ if } |i-j|\geq 2 \rangle.
\]

The action of $\mathbf{B}_r$ on the set of spherical $r-$systems
of generators for $G$ of unordered type $\tau=(m_1,\dots,m_r)$,
which was given in Definition~\ref{def.braid.action}, is given by
\[
    \sigma_i: (x_1,\dots,x_i,\dots,x_r) \rightarrow (x_1,\dots,x_{i-1},x_i x_{i+1}
    x_i^{-1},x_i,x_{i+2}\dots,x_r),
\]
for $i=1,\dots,r-1$.

There is also a natural action of $\Aut(G)$ given by
\[
    \phi(x_1,\dots,x_r) = (\phi(x_1),\dots,\phi(x_r)), \quad \phi
    \in \Aut(G).
\]

Since the two actions of $\mathbf{B}_r$ and $\Aut(G)$ commute, one
gets a double action of $\mathbf{B}_r \times \Aut(G)$ on the set
of spherical $r-$systems of generators for $G$ of an unordered
type $\tau=(m_1,\dots,m_r)$.

Let $x\in G$ and denote by $C=x^{\Aut(G)}$ the $\Aut(G)-$equivalence
class of $x$. Since all the elements in $C$ have the same order, we
may define $\ord(C):=\ord(x)$.

Let $\mathbf{C}=(C_1,\dots,C_r)$ be a set of $\Aut(G)$-equivalence
classes. Denote
\begin{multline*}
    N(\mathbf{C}):= \{(x_1,\dots,x_r) \in G^r:
    x_1\cdot \ldots \cdot x_r=1, \langle x_1,\dots,x_r\rangle = G \text{ and } \\
    \text{there is a permutation } \pi \in \mathfrak{S}_r \text{ with }
    x_{\pi(i)} \in C_i \text{ for all } i\}.
\end{multline*}

We say that $\mathbf{C}$ has \emph{type}
$\tau=(m_1,\dots,m_r)$ if $N(\mathbf{C}) \neq \emptyset$ and
$\ord(C_i)=m_i$ (for $i=1,\dots,r$).
$\mathbf{C}$ has an \emph{unordered type} $\tau$ if
$N(\mathbf{C}) \neq \emptyset$ and the
orders of $C_1,\dots,C_r$ are $m_1,\dots,m_r$ up to a permutation.
We denote
\[
    N(\tau)= \{\mathbf{C}=(C_1,\dots,C_r):
    \mathbf{C} \text{ has an unordered type } \tau \}.
\]

Observe that the action of $\mathbf{B}_r$ preserves the set
$N(\mathbf{C})$, since it preserves the conjugacy classes, and hence
the $\Aut(G)-$equivalence classes, of the elements in a spherical
$r-$system of generators of $G$. It is clear that the action of
$\Aut(G)$ also preserves the set $N(\mathbf{C})$. The following
Lemma immediately follows.

\begin{lem} \label{lem.two.Br.action}
Let $\tau_1$ and $\tau_2$ be two types, then
\begin{multline*}
    h(G;\tau_1,\tau_2) \geq \#\{\mathbf{C}_i,\mathbf{D}_j:
    \mathbf{C}_i=(C_{i,1},\dots,C_{i,r_1}) \text{ and }
    \mathbf{D}_j=(D_{j,1},\dots,D_{j,r_2}), \\
    \text{where } \mathbf{C}_i \in N(\tau_1) \text{ for all i, }
    \mathbf{D}_j \in N(\tau_2) \text{ for all j, } \text {and }  \\
    \text{ the } \Aut(G)-\text{classes }
    \{C_{i,k}\}_{i,k} \text{ and } \{D_{j,l}\}_{j,l}
    \text{ are all distinct} \} .
\end{multline*}
\end{lem}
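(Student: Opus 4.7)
The plan is to extract from each $(T_1,T_2)\in\mathcal{U}(G;\tau_1,\tau_2)$ a combinatorial invariant $\iota(T_1,T_2)$ --- the unordered pair consisting of the multiset of $\Aut(G)$-equivalence classes of the entries of $T_1$ and the corresponding multiset for $T_2$ --- and then to argue that under the distinctness hypothesis, different pairs $(\mathbf{C}_i,\mathbf{D}_j)$ from the collection on the right-hand side produce different values of $\iota$, hence come from different Hurwitz orbits.

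\smallskip

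First I would verify the braid-invariance of $\iota$. Using the explicit formula for the standard generator $\sigma_k$ recalled in Section~\ref{subsec.braid.action}, every entry of $\sigma_k(T)$ is either an entry of $T$ itself or a conjugate $x_k x_{k+1}x_k^{-1}$ of an entry of $T$ by another entry of $T$; hence the multiset of conjugacy classes of the entries, and \emph{a fortiori} the multiset of their $\Aut(G)$-equivalence classes, is preserved by each $\sigma_k$ and therefore by all of $\mathbf{B}_r$. The $\Aut(G)$-invariance is tautological: $\phi(x)$ lies in the same $\Aut(G)$-class as $x$ for every $\phi\in\Aut(G)$. Applying these two observations componentwise shows that the ordered pair of multisets of $\Aut(G)$-classes is invariant under $(\mathbf{B}_{r_1}\times\mathbf{B}_{r_2})\times\Aut(G)$; when $\tau_1=\tau_2$, the extra $\ZZ/2\ZZ$-factor simply swaps the two multisets, so the \emph{unordered} pair $\iota(T_1,T_2)$ is still invariant, and hence $\iota$ is a well-defined function on Hurwitz orbits in the sense of Definition~\ref{def.Hur.comp}.

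\smallskip

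For the counting step, the non-emptiness of $N(\mathbf{C}_i)$ and $N(\mathbf{D}_j)$ lets me choose representatives $T_1^{(i)}\in N(\mathbf{C}_i)$ and $T_2^{(j)}\in N(\mathbf{D}_j)$. The distinctness of all the $\Aut(G)$-classes appearing across the collection $\{\mathbf{C}_i\}\cup\{\mathbf{D}_j\}$ guarantees both that $(T_1^{(i)},T_2^{(j)})\in\mathcal{U}(G;\tau_1,\tau_2)$ (so that~\eqref{eq.sigmasetcond} holds) and that $\iota$ takes pairwise distinct values on the different realizations. Therefore different pairs $(\mathbf{C}_i,\mathbf{D}_j)$ land in different Hurwitz orbits, giving the claimed lower bound on $h(G;\tau_1,\tau_2)$.

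\smallskip

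The main obstacle I expect is not the invariance itself, which is immediate from the explicit formula for $\sigma_k$, but rather the verification that the distinctness hypothesis genuinely implies~\eqref{eq.sigmasetcond}: since $\Sigma(T_i)$ is generated by conjugates of all \emph{powers} of the entries, one has to either interpret ``distinct'' strongly enough (as distinctness of the $\Aut(G)$-classes of all nontrivial powers) or appeal to assumptions on the orders that rule out such power-coincidences. In either case the substantive content of the argument is the invariance packaged by $\iota$.
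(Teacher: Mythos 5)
Your invariance argument is exactly the paper's: immediately before the Lemma the paper records that both the braid action and the $\Aut(G)$-action preserve each set $N(\mathbf{C})$ (each braid generator replaces an entry by a conjugate of an entry, so the multiset of $\Aut(G)$-classes of the entries is unchanged), and then states that the Lemma ``immediately follows.'' Your invariant $\iota$ is that observation packaged explicitly, with the $\ZZ/2\ZZ$-swap in the case $\tau_1=\tau_2$ handled correctly, so the separation of orbits by $\iota$ is precisely the content of the paper's (essentially omitted) proof.

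The obstacle you flag in your last paragraph is genuine, and it is the one point where your middle paragraph over-claims: distinctness of the $\Aut(G)$-classes of the entries does \emph{not} by itself yield \eqref{eq.sigmasetcond}, since $\Sigma(T_i)$ contains conjugates of all \emph{powers} of the entries, and a power-coincidence between some $C_{i,k}$ and some $D_{j,l}$ could leave $N(\mathbf{C}_i)\times N(\mathbf{D}_j)$ with no disjoint representative at all, in which case that pair contributes nothing to $h(G;\tau_1,\tau_2)$. The paper does not resolve this inside the Lemma either; it is supplied at the point of application: in the proofs of Theorems~\ref{thm.moduli.An}(a) and~\ref{thm.moduli.Sn}(a) the chosen classes are almost homogeneous with pairwise different numbers of fixed points, and Theorem~\ref{thm.conj.An} is invoked to produce an actual unmixed ramification structure $(T_1,T_2)$ inside each chosen tuple of classes. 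So the Lemma should be read (and your proof completed) with the additional input that each pair $(\mathbf{C}_i,\mathbf{D}_j)$ admits a disjoint realization; granting that, your argument is correct and coincides with the paper's.
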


One can moreover restrict the action of $\Aut(G)$ to the action of $\Inn(G) \cong G$,
which is given by
\[
   g: (x_1,\dots,x_r) \mapsto (g x_1 g^{-1},\dots,g x_r g^{-1}), \quad g \in G.
\]

Denote by $O^G_T$ the orbit of $T=(x_1,\dots,x_r)$ under the action
of $\Inn(G) \cong G$, and by $O^{\mathbf{B}_r}_T$ the orbit of $T$
under the action of $\mathbf{B}_r$. By Lemma~\ref{lem.Vol}, the action of $\Inn(G)$ leaves the
orbit $O^\mathbf{B}_T$ invariant, namely,

\begin{lem}\label{lem.Br.Inn}
$O^G_T \subseteq O^{\mathbf{B}_r}_T$.
\end{lem}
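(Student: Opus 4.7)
The plan is to read the statement as a direct unpacking of Lemma \ref{lem.Vol}. I would interpret the phrase ``$\Inn(G)$ leaves each braid orbit invariant'' as the setwise equality $g \cdot O^{\mathbf{B}_r}_T = O^{\mathbf{B}_r}_T$ for every $g \in G$, and then simply apply this equality to the distinguished element $T$ on the right-hand side to obtain $g \cdot T \in O^{\mathbf{B}_r}_T$ for every $g \in G$. Since $O^G_T = \{g \cdot T : g \in G\}$ by definition, varying $g$ then gives the desired inclusion $O^G_T \subseteq O^{\mathbf{B}_r}_T$.

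If one prefers a direct proof without invoking Lemma \ref{lem.Vol}, I would instead exploit the fact that $G = \langle x_1, \ldots, x_r \rangle$, so $\Inn(G)$ is generated by the conjugations by the $x_i$'s; it then suffices to exhibit, for each $i$, a braid $\beta_i \in \mathbf{B}_r$ acting on $T$ as simultaneous conjugation by $x_i$. A direct calculation with the braid $\delta = \sigma_1 \sigma_2 \cdots \sigma_{r-1}$ shows that $\delta \cdot T = (x_1 x_2 x_1^{-1}, x_1 x_3 x_1^{-1}, \ldots, x_1 x_r x_1^{-1}, x_1)$, which equals the simultaneous conjugate $x_1 \cdot T$ followed by a cyclic shift of the entries; one verifies consistency by checking that both sides have product $1$, using $x_1 \cdots x_r = 1$. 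Since we work in the sphere braid group $\pi_0(\mathrm{Diff}(\mathbb{P}^1 \setminus \{p_1,\ldots,p_r\}))$, the cyclic shift itself is realized by a ``rotation braid'', so composing yields the required $\beta_1$. The generators $x_i$ with $i > 1$ are handled symmetrically by first moving $x_i$ into the first slot via an appropriate word in the $\sigma_j$'s.

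The one potentially delicate point in this direct approach is checking that the cyclic shift of tuple entries really is realized by an element of the sphere braid group, rather than only in the planar braid group $\mathbf{B}_r$; this is intuitive from the sphere geometry but must be verified carefully, and it is essentially the extra content encoded in Lemma \ref{lem.Vol}. For this reason the cleanest write-up is the first one above, which just invokes that lemma.
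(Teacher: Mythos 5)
Your first write-up is exactly the paper's proof: the lemma is stated there as an immediate consequence of Lemma~\ref{lem.Vol}, with no further argument. (One small remark on your alternative sketch: since $x_1\cdots x_r=1$, the braid $\sigma_{r-1}\sigma_{r-2}\cdots\sigma_1$ sends $(x_1,\dots,x_r)$ to $\bigl((x_1\cdots x_{r-1})x_r(x_1\cdots x_{r-1})^{-1},x_1,\dots,x_{r-1}\bigr)=(x_r,x_1,\dots,x_{r-1})$, so the cyclic shift is already realized in the planar braid group and the sphere relation is not actually needed.)
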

Hence, we get an induced action of $\Out(G) = \Aut(G)/\Inn(G)$ on
the set of spherical $r-$systems of generators for $G$ of unordered
type $\tau$.

In the special case of $\mathbf{B}_3$, the braid group on $3$
strands, we show that the inverse inclusion also holds, and one can
therefore deduce a more accurate bound on the number of orbits.

Let $T=(x,y,(xy)^{-1})$ be a spherical $3-$system of generators for $G$,
and let
\begin{multline*}
    T^{un}:= (x,y,(xy)^{-1}) \cup
        (y,x,(yx)^{-1}) \cup (x,(yx)^{-1},y) \\ \cup (y,(xy)^{-1},x)
        \cup ((xy)^{-1},x,y) \cup ((yx)^{-1},y,x),
\end{multline*}
be an \emph{unordered} spherical $3-$system of generators.

Observe that $O^G_{T^{un}} := \{O^G_T: T \in T^{un}\}$, where
\[
    O^G_T = \{(gxg^{-1},gyg^{-1},g(xy)^{-1}g^{-1}): g \in G \}.
\]

\begin{lem}\label{lem.B3.action}
The action of $\mathbf{B}:=\mathbf{B}_3$ preserves $O^G_{T^{un}}$.
\end{lem}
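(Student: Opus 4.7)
The plan is to reduce the claim to a finite check on the six triples forming $T^{un}$. Since $\mathbf{B}_3 = \langle \sigma_1,\sigma_2\rangle$, it suffices to verify that each of the generators $\sigma_1,\sigma_2$ sends every triple in $T^{un}$ into $O^G_{T^{un}}$. Moreover, because the action of $\mathbf{B}_3$ commutes with the simultaneous-conjugation action of $\Inn(G)\cong G$ (so $\sigma_i(g T g^{-1})=g\sigma_i(T)g^{-1}$), it is enough to work with the chosen ordered representatives $(x,y,(xy)^{-1})$, $(y,x,(yx)^{-1})$, $(x,(yx)^{-1},y)$, $(y,(xy)^{-1},x)$, $((xy)^{-1},x,y)$, $((yx)^{-1},y,x)$: whatever we obtain, we are allowed to conjugate the result by an arbitrary element of $G$ and still stay in the same $G$-orbit.

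Next, I would carry out the twelve explicit one-line computations using the defining formulas
\[
\sigma_1(z_1,z_2,z_3)=(z_1 z_2 z_1^{-1},z_1,z_3),\qquad \sigma_2(z_1,z_2,z_3)=(z_1,z_2 z_3 z_2^{-1},z_2),
\]
together with the identities $(xy)^{-1}=y^{-1}x^{-1}$ and $(yx)^{-1}=x^{-1}y^{-1}$. For instance, applying $\sigma_1$ to $(x,y,(xy)^{-1})$ gives $(xyx^{-1},x,(xy)^{-1})$, and conjugating by $x^{-1}$ produces $(y,x,(yx)^{-1})$, which sits in $T^{un}$. Applying $\sigma_2$ to the same triple directly yields $(x,(yx)^{-1},y)$, again in $T^{un}$. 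The remaining ten cases work in exactly the same way: each $\sigma_i$ output can be conjugated by $x^{\pm 1}$ or $y^{\pm 1}$ to land on the nose in $T^{un}$.

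Since every generator of $\mathbf{B}_3$ maps every representative of the six $G$-orbits making up $O^G_{T^{un}}$ into one of those same orbits, and these generators generate $\mathbf{B}_3$, the whole braid group $\mathbf{B}_3$ preserves $O^G_{T^{un}}$. The only obstacle is the bookkeeping: organizing the twelve short calculations in a readable way (ideally in a small table) and being consistent about the order-reversal in $(xy)^{-1}$ versus $(yx)^{-1}$, which is the source of the only real subtlety in the computation.
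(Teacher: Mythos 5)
Your proposal is correct and follows essentially the same route as the paper: the paper likewise computes $\sigma_1$ and $\sigma_2$ on the representative $(x,y,(xy)^{-1})$, obtaining $x(y,x,(yx)^{-1})x^{-1}\in O^G_{(y,x,(yx)^{-1})}$ and $(x,(yx)^{-1},y)\in O^G_{(x,(yx)^{-1},y)}$, and leaves the remaining cases to the evident symmetry of $T^{un}$. Your extra remark that the braid action commutes with simultaneous conjugation, so one may work with the six ordered representatives, is exactly the (implicit) justification the paper relies on.
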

\begin{proof}
Let $(x,y,(xy)^{-1})$ be a spherical $3-$system for $G$, then the
action of $\mathbf{B}:=\mathbf{B}_3=\langle \sigma_1, \sigma_2 \rangle$ is
given by:
\[
    \sigma_1: (x,y,y^{-1}x^{-1}) \rightarrow
    (xyx^{-1},x,y^{-1}x^{-1})=x(y,x,x^{-1}y^{-1})x^{-1} \in O^G_{(y,x,(yx)^{-1})},
\]
and
\[
    \sigma_2: (x,y,y^{-1}x^{-1}) \rightarrow (x,yy^{-1}x^{-1}y^{-1},y) =
    (x,x^{-1}y^{-1},y) \in O^G_{(x,(yx)^{-1},y)}.
\]
\end{proof}

From Lemma~\ref{lem.Br.Inn} and Lemma~\ref{lem.B3.action} we deduce the following
orbit equality.
\begin{cor}\label{cor.B3.Inn}
$O^G_{T^{un}} = O^{\mathbf{B}_3}_{T^{un}}$.
\end{cor}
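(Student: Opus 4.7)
My plan is to interpret the symbol $O^G_{T^{un}}$ as the union $\bigcup_{T\in T^{un}} O^G_T$ of the six $\Inn(G)$-orbits coming from the six orderings in $T^{un}$, and likewise $O^{\mathbf{B}_3}_{T^{un}} := \bigcup_{T\in T^{un}} O^{\mathbf{B}_3}_T$. Under this reading, the claimed equality splits naturally into two inclusions, and each one follows directly from one of the two preceding lemmas.

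First I will establish $O^G_{T^{un}} \subseteq O^{\mathbf{B}_3}_{T^{un}}$. By Lemma~\ref{lem.Br.Inn}, for every $T\in T^{un}$ we already have $O^G_T \subseteq O^{\mathbf{B}_3}_T$; taking the union over the six triples in $T^{un}$ then yields the desired inclusion immediately.

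For the reverse inclusion $O^{\mathbf{B}_3}_{T^{un}} \subseteq O^G_{T^{un}}$, I will invoke Lemma~\ref{lem.B3.action}, which states that the action of $\mathbf{B}_3$ on triples preserves the set $O^G_{T^{un}}$. Since every $T\in T^{un}$ lies trivially in $O^G_{T^{un}}$, iterating applications of $\sigma_1$ and $\sigma_2$ starting from $T$ never leaves $O^G_{T^{un}}$; hence $O^{\mathbf{B}_3}_T \subseteq O^G_{T^{un}}$ for each such $T$, and taking the union again gives the inclusion.

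The corollary is thus essentially a bookkeeping consequence of Lemmas~\ref{lem.Br.Inn} and~\ref{lem.B3.action}, the genuine content having already been carried out in the explicit calculations of $\sigma_1\cdot T$ and $\sigma_2\cdot T$ inside the proof of Lemma~\ref{lem.B3.action}. Consequently there is no real obstacle; the main thing to keep in mind is to be careful with the interpretation of $O^G_{T^{un}}$ as a union of orbits and with the observation that $\mathbf{B}_3$-stability of $O^G_{T^{un}}$, once verified on the generators, automatically propagates to the whole braid group.
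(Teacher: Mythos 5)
Your argument is correct and matches the paper's intended deduction: the corollary is stated there as an immediate consequence of Lemma~\ref{lem.Br.Inn} (giving $O^G_{T}\subseteq O^{\mathbf{B}_3}_{T}$) and Lemma~\ref{lem.B3.action} (giving $\mathbf{B}_3$-stability of $O^G_{T^{un}}$), exactly the two inclusions you spell out. Your only addition is the explicit bookkeeping with unions and the remark that stability under the generators $\sigma_1,\sigma_2$ propagates to all of $\mathbf{B}_3$, which the paper leaves implicit.
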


Denote by $d=d(G;\tau)$ the number of orbits in the set of spherical
$3-$systems of generators for $G$ of unordered type $\tau$, under
the action of $\mathbf{B}_3$. Then by Corollary~\ref{cor.B3.Inn},

\begin{cor}\label{cor.dG}
$d(G;\tau) = \#\{O^G_{T^{un}}: T \in \mathcal{S}(G,\tau)\}.$
\end{cor}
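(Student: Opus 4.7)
The plan is to unpack Corollary \ref{cor.B3.Inn} directly, interpreting the equality $O^G_{T^{un}} = O^{\mathbf{B}_3}_{T^{un}}$ as an identification of each $\mathbf{B}_3$-orbit on $\mathcal{S}(G,\tau)$ with a single set of the form $O^G_{T^{un}}$. First I would spell out that the union $O^G_{T^{un}} = \bigcup_{T' \in T^{un}} O^G_{T'}$, regarded as a subset of $G^3$, coincides with the $\mathbf{B}_3$-orbit $O^{\mathbf{B}_3}_T$: the inclusion $O^{\mathbf{B}_3}_T \subseteq O^G_{T^{un}}$ is exactly Lemma \ref{lem.B3.action}, while the reverse inclusion follows because each of the six triples in $T^{un}$ is witnessed in the proof of Lemma \ref{lem.B3.action} to lie in $O^{\mathbf{B}_3}_T$ (up to conjugation, which is absorbed by Lemma \ref{lem.Br.Inn}).

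Once this reformulation $O^G_{T^{un}} = O^{\mathbf{B}_3}_T$ is in place, the corollary becomes almost tautological. Namely, two triples $T, T' \in \mathcal{S}(G,\tau)$ satisfy $O^G_{T^{un}} = O^G_{T'^{un}}$ if and only if $T$ and $T'$ lie in the same $\mathbf{B}_3$-orbit. Hence the assignment $T \mapsto O^G_{T^{un}}$ has fibers precisely equal to the $\mathbf{B}_3$-orbits on $\mathcal{S}(G,\tau)$, and taking cardinalities yields
\[
    \#\{O^G_{T^{un}} : T \in \mathcal{S}(G,\tau)\} = \#\{O^{\mathbf{B}_3}_T : T \in \mathcal{S}(G,\tau)\} = d(G;\tau),
\]
which is the desired identity.

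There is no genuine obstacle here: the statement is a bookkeeping corollary of the earlier orbit equality. The only subtlety is notational, namely making sure that $O^G_{T^{un}}$ is consistently interpreted as the subset $\bigcup_{T' \in T^{un}} O^G_{T'}$ of $G^3$, so that the equation in Corollary \ref{cor.B3.Inn} is read as an equality of $\mathbf{B}_3$-invariant subsets rather than as an equality of sets of orbits. With that convention fixed, the argument reduces to the two sentences above.
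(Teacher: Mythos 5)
Your argument is correct and is essentially the paper's own: the paper states Corollary~\ref{cor.dG} as an immediate consequence of Corollary~\ref{cor.B3.Inn} (itself obtained from Lemmas~\ref{lem.Br.Inn} and~\ref{lem.B3.action}), and you simply spell out the same identification of each $\mathbf{B}_3$-orbit with the set $O^G_{T^{un}}$ before counting. No issues.
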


Now, one can bound the number of Hurwitz components using
$d(G;\tau)$.

\begin{cor}\label{cor.hG}
Let $\tau_1$ and $\tau_2$ be two types, then
\[
\frac{d(G;\tau_1)\cdot d(G;\tau_2)}{2|\Out(G)|} \leq h(G;\tau_1,\tau_2) \leq
d(G;\tau_1) \cdot d(G;\tau_2).
\]
\end{cor}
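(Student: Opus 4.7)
The plan is to establish both inequalities by comparing orbit counts under a descending chain of group actions on $\mathcal{U}(G;\tau_1,\tau_2)$. Write $H$ for the acting group from Definition \ref{def.Hur.comp}, which is $(\mathbf{B}_3 \times \mathbf{B}_3)\times\Aut(G)$ when $\tau_1\neq\tau_2$ and $(\mathbf{B}_3\wr\mathbb{Z}/2\mathbb{Z})\times\Aut(G)$ when $\tau_1=\tau_2$; in either case $H$ contains the subgroup $K:=\mathbf{B}_3\times\mathbf{B}_3$ acting componentwise. Let $X_i$ denote the set of spherical $3$-systems of generators of $G$ of unordered type $\tau_i$, so $\mathcal{U}\subseteq X_1\times X_2$.

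For the upper bound, I would note that the $K$-action respects the product structure, so its orbits on $X_1\times X_2$ are exactly products $O_1\times O_2$ of $\mathbf{B}_3$-orbits on the factors, giving at most $d(G;\tau_1)\cdot d(G;\tau_2)$ orbits. Restricting to the $K$-invariant subset $\mathcal{U}$ can only decrease this count, and passing to the larger group $H$ further coarsens the partition; hence $h(G;\tau_1,\tau_2)\leq d(G;\tau_1)\cdot d(G;\tau_2)$.

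For the lower bound, the key observation is that the diagonal $\Inn(G)$-action on $\mathcal{U}$ is already absorbed into $K$. Indeed, given $g\in G$ and $(T_1,T_2)\in\mathcal{U}$, Lemma \ref{lem.Br.Inn} yields $\sigma_i\in\mathbf{B}_3$ with $gT_ig^{-1}=\sigma_i\cdot T_i$, so the element $(\sigma_1,\sigma_2)\in K$ realizes the simultaneous conjugation by $g$. Consequently, in upgrading from $K$ to $H$ the genuinely new symmetries are only $\Out(G)=\Aut(G)/\Inn(G)$ acting diagonally, together with the swap $\mathbb{Z}/2\mathbb{Z}$ in the case $\tau_1=\tau_2$. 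Each $H$-orbit on $\mathcal{U}$ is therefore a union of at most $2|\Out(G)|$ distinct $K$-orbits, which (combined with the identification of $K$-orbits on $\mathcal{U}$ with products of $\mathbf{B}_3$-orbits, as in the upper bound) gives
\[
h(G;\tau_1,\tau_2)\geq \frac{d(G;\tau_1)\cdot d(G;\tau_2)}{2|\Out(G)|}.
\]

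The main obstacle is the careful bookkeeping for the lower bound: one must simultaneously realize conjugation by a single group element $g$ as a pair of \emph{different} braid words $(\sigma_1,\sigma_2)$ acting on the two factors (whereas Lemma \ref{lem.Br.Inn} only handles one factor at a time), and one must correctly track the factor of $2$ coming from the wreath product action in the case $\tau_1=\tau_2$, making sure it is not double counted against $\Out(G)$.
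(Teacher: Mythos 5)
Your proposal is correct and follows essentially the same route as the paper: the upper bound by comparing $H$-orbits with orbits of the subgroup $\mathbf{B}_3\times\mathbf{B}_3$ (whose orbits are products, counted by $d(G;\tau_1)\cdot d(G;\tau_2)$), and the lower bound by using Lemma~\ref{lem.Br.Inn} to absorb the diagonal $\Inn(G)$-action into the braid action, so that only $\Out(G)$ and the swap can merge $K$-orbits, each $H$-orbit being a union of at most $2|\Out(G)|$ of them. This is exactly the argument the paper carries out via Lemma~\ref{lem.G.action}.
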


The proof of the Corollary follows from the following Lemma,
which summarizes some well-known facts regarding group actions on finite sets.

\begin{lem}\label{lem.G.action}
Let $H$ and $K$ be groups and let $X$ and $Y$ be finite sets.

If $H$ acts on $X$ and $x \in X$, we denote
$O^H_x:=\{h \cdot x: h \in H\}$ the orbit of $x$ under the action of $H$ and
$\Omega_H = \Omega_H^X$ the set of orbits in $X$.

\begin{enumerate}\renewcommand{\theenumi}{\it \roman{enumi}}
\item If the groups $H$ and $K$ act on a set $X$, then for every $x \in X$,
\[
|O^H_x| \leq |O^{H \times K}_x| \leq |O^H_x|\cdot|K|.
\]

\item If the groups $H$ and $K$ act on a set $X$, and there is some positive constant
$c$ such that $|O^H_x| \leq c|O^K_x|$ for any $x \in X$, then
\[
    |\Omega_H| \geq \frac{1}{c}|\Omega_K|.
\]

\item If the groups $H$ and $K$ act on the sets $X$ and $Y$ respectively, then
\[
    |\Omega_{H \times K}^{X \times Y}| = |\Omega_H^X| \cdot |\Omega_K^Y|.
\]
\end{enumerate}
\end{lem}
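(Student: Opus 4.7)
The plan is to treat the three parts of the lemma separately, each by an elementary orbit-counting argument.

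For (i), I would use the decomposition
\[
O^{H\times K}_x = \bigcup_{k \in K} O^H_{kx}.
\]
The inclusion $O^H_x \subseteq O^{H\times K}_x$ gives the left inequality immediately. For the right inequality, I would observe that (under the commuting-actions hypothesis needed for $H\times K$ to act on $X$ in the first place) the group $K$ permutes the $H$-orbits sitting inside a fixed $(H\times K)$-orbit, so all the orbits $O^H_{kx}$ have the same cardinality $|O^H_x|$. Since there are at most $|K|$ of them, one gets $|O^{H\times K}_x| \leq |O^H_x|\cdot|K|$.

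For (ii), I would use the standard identity
\[
|\Omega_H| = \sum_{x\in X} \frac{1}{|O^H_x|},
\]
obtained by summing $1/|O|$ over the points of each orbit $O$. The hypothesis $|O^H_x|\leq c|O^K_x|$ gives $1/|O^H_x|\geq 1/(c|O^K_x|)$ pointwise, and summing over $x\in X$ yields the stated bound $|\Omega_H|\geq |\Omega_K|/c$.

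For (iii), I would check directly that the orbit of $(x,y)$ under the product action of $H\times K$ on $X\times Y$ is exactly the Cartesian product $O^H_x\times O^K_y$; it then follows formally that the orbits of the product action are in bijection with pairs consisting of an $H$-orbit in $X$ and a $K$-orbit in $Y$, giving the claimed multiplicativity.

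No step presents a serious difficulty: each item reduces to a short orbit calculation. The one subtlety worth recording in the write-up is that part (i) tacitly assumes the $H$- and $K$-actions commute, which is precisely the condition under which $H\times K$ acts on $X$ and under which $K$ transports $H$-orbits to $H$-orbits of the same size.
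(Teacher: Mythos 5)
Your proofs of all three parts are correct: the decomposition of the $(H\times K)$-orbit into $K$-translates of $H$-orbits for (i), the identity $|\Omega_H|=\sum_{x\in X}1/|O^H_x|$ for (ii), and the product description of orbits for (iii) are exactly the standard arguments, and your remark that (i) tacitly requires the two actions to commute is an accurate reading of the hypothesis that $H\times K$ acts. The paper itself states this lemma without proof, describing it as a summary of well-known facts, so your write-up simply supplies the omitted routine details.
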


\begin{proof}[Proof of Corollary~\ref{cor.hG}]
By Lemma~\ref{lem.G.action},
\[
\Omega_{(B_3 \times B_3)\times \Aut(G)}^{\mathcal{U}(G;\tau_1,\tau_2)} \leq
\Omega_{B_3 \times B_3}^{\mathcal{U}(G;\tau_1,\tau_2)} = d(G;\tau_1) \cdot d(G;\tau_2).
\]
On the other direction, by Lemma~\ref{lem.Br.Inn},
Lemma~\ref{lem.G.action} and Definition \ref{def.Hur.comp}, we have

In case $\tau_1 \ne \tau_2$:
\begin{align*}
&\Omega_{(B_3 \times B_3)\times \Aut(G)}^{\mathcal{U}(G;\tau_1,\tau_2)} =
\Omega_{(B_3 \times B_3)\times \Out(G)}^{\mathcal{U}(G;\tau_1,\tau_2)}
&\geq \frac{\Omega_{B_3 \times B_3}^{\mathcal{U}(G;\tau_1,\tau_2)}}{|\Out(G)|}
= \frac{d(G;\tau_1) \cdot d(G;\tau_2)}{|\Out(G)|}.
\end{align*}

In case $\tau_1 = \tau_2= \tau$:
\begin{align*}
&\Omega_{(B_3 \wr \ZZ/2\ZZ)\times \Aut(G)}^{\mathcal{U}(G;\tau,\tau)} =
\Omega_{(B_3 \wr \ZZ/2\ZZ)\times \Out(G)}^{\mathcal{U}(G;\tau,\tau)}
&\geq \frac{\Omega_{B_3 \wr \ZZ/2\ZZ}^{\mathcal{U}(G;\tau,\tau)}}{|\Out(G)|}
= \frac{d(G;\tau)^2}{2|\Out(G)|}.
\end{align*}
\end{proof}

\subsection{Hurwitz components for $\mathfrak{A}_n$ and $\mathfrak{S}_n$}\label{subsec.Hur.An.Sn}
In this Section we prove Theorems~\ref{thm.moduli.An}(a)
and~\ref{thm.moduli.Sn}(a) regarding alternating and symmetric
groups. 

The results of Liebeck and Shalev, which are stated below, are applicable to any
Fuchsian group $\Ga$, however, we shall use them only for the case
of orbifold surface groups (see
Definition~\ref{defn.orbifold.fuchsian})
\[
    \Ga=\Ga(g' \mid m_1,\dots,m_r)
\]
that satisfy the inequality
\begin{equation}\label{eq.Fuchsian}
    2g'-2 + \sum_{i=1}^r \bigl( 1-\frac{1}{m_i}\bigr) > 0.
\end{equation}

\begin{defin}
Let $C_i = g_i^{\mathfrak{S}_n}$ ($1 \leq i \leq r$) be conjugacy classes in
$\mathfrak{S}_n$, and let $m_i$ be the order of $g_i$. Define $\sgn(C_i)=
\sgn(g_i)$, and write $\mathbf{C} = (C_1,\dots,C_r)$. Define
\[
    \Hom_{\mathbf{C}}(\Gamma,\mathfrak{S}_n) = \{\phi\in \Hom(\Gamma,\mathfrak{S}_n): \phi(x_i) \in C_i
    \text{ for } 1 \leq i \leq r\}.
\]
\end{defin}

\begin{defin}
Conjugacy classes in $\mathfrak{S}_n$ of cycle-shape $(m^k)$, where $n = mk$,
namely, containing $k$ cycles of length $m$ each, are called
\emph{homogeneous}. A conjugacy class having cycle-shape
$(m^k,1^f)$, namely, containing $k$ cycles of length $m$ each and
$f$ fixed points, with $f$ bounded, is called \emph{almost
homogeneous}.
\end{defin}

\begin{theo}~\cite[Theorem 1.9]{LS04}. \label{thm.conj.An}
Let $\Gamma$ be a Fuchsian group, and let $C_i$ ($1\leq i \leq r$)
be conjugacy classes in $\mathfrak{S}_n$ with cycle-shapes
$(m_i^{k_i},1^{f_i})$, where $f_i<f$ for some constant $f$ and
$\prod_{i=1}^r \sgn(C_i) = 1$. Set $\mathbf{C} = (C_1,\dots,C_r)$.
Then the probability that a random homomorphism in
$\Hom_\mathbf{C}(\Gamma, \mathfrak{S}_n)$ has image containing $\mathfrak{A}_n$ tends to
$1$ as $n \rightarrow \infty$.
\end{theo}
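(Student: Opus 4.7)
The plan is to apply the Frobenius--Mednykh character formula to the orbifold surface group $\Gamma = \Gamma(g' \mid m_1, \dots, m_r)$ and rewrite
\[
|\Hom_{\mathbf{C}}(\Gamma, \mathfrak{S}_n)| = \frac{\prod_{i=1}^r |C_i|}{|\mathfrak{S}_n|} \cdot |\mathfrak{S}_n|^{2g'} \sum_{\chi \in \mathrm{Irr}(\mathfrak{S}_n)} \frac{\prod_{i=1}^r \chi(g_i)}{\chi(1)^{r+2g'-2}},
\]
where $g_i \in C_i$. The trivial character contributes $1$ to the inner sum, and the sign character contributes $\prod \sgn(C_i) = 1$ by hypothesis (but its image is of course contained in $\mathfrak{A}_n$-extensions we care about). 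The first step is to prove that the tail of the character sum is $o(1)$; this is the estimate
\[
\sum_{\chi \neq 1, \sgn} \Bigl|\frac{\prod_{i=1}^r \chi(g_i)}{\chi(1)^{r+2g'-2}}\Bigr| = o(1),
\]
so that $|\Hom_{\mathbf{C}}(\Gamma, \mathfrak{S}_n)| \sim 2 \prod_{i=1}^r |C_i| \cdot |\mathfrak{S}_n|^{2g'-1}$.

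To establish the tail bound I would invoke the Larsen--Shalev character estimates, which say that for any $g \in \mathfrak{S}_n$ of cycle-shape $(m^k, 1^f)$ with bounded $f$ and any $\chi \in \mathrm{Irr}(\mathfrak{S}_n) \setminus \{1, \sgn\}$, one has $|\chi(g)/\chi(1)| \leq \chi(1)^{-\sigma}$ for a positive exponent $\sigma$ depending only on $m$. Combining these with the Fuchsian inequality $2g'-2 + \sum_i (1 - 1/m_i) > 0$ makes the resulting exponent of $\chi(1)$ in each summand strictly positive and summable, by a (now standard) argument comparing against the zeta function $\sum_\chi \chi(1)^{-s}$ of $\mathfrak{S}_n$, which is known to be $O(1)$ for $s > \text{const}$.

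Next I would show that the probability the image of a random $\phi \in \Hom_{\mathbf{C}}(\Gamma, \mathfrak{S}_n)$ is contained in a maximal subgroup $M < \mathfrak{S}_n$ with $\mathfrak{A}_n \not\leq M$ tends to zero. By the O'Nan--Scott theorem, such $M$ is either intransitive ($\mathfrak{S}_k \times \mathfrak{S}_{n-k}$), transitive imprimitive ($\mathfrak{S}_k \wr \mathfrak{S}_{n/k}$), or primitive. For each type one bounds $|\Hom_{\mathbf{C}}(\Gamma, M)|$ via the same Frobenius machinery applied to $M$, and then sums over conjugacy classes of $M$. The primitive case uses Liebeck's bound $|M| \leq 4^n$ for primitive $M \neq \mathfrak{A}_n, \mathfrak{S}_n$, so $|\Hom_{\mathbf{C}}(\Gamma, M)|$ is negligible even after summing over all such $M$. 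The intransitive and imprimitive cases use that an almost homogeneous element of cycle-shape $(m^k, 1^f)$ with $f$ bounded has very few ways to distribute its long cycles among a nontrivial block system; a direct count shows the fraction of $C_i$ lying in $M$ is exponentially small, so after raising to the $r$-th power the resulting ratio against $|\Hom_{\mathbf{C}}(\Gamma, \mathfrak{S}_n)|$ vanishes.

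The main obstacle is the character bound: one needs uniform control on $|\chi(g_i)|/\chi(1)$ for \emph{all} irreducible $\chi$ simultaneously, which requires the Larsen--Shalev machinery built on the Murnaghan--Nakayama rule and a fine analysis of Young diagrams. A secondary technical point is verifying that the sign-character contribution, which is the only other "large" term in the sum, is consistent with the image lying in $\mathfrak{A}_n$ rather than in a non-alternating subgroup; this is precisely where the hypothesis $\prod_i \sgn(C_i) = 1$ is used, ensuring that homomorphisms with image inside $\mathfrak{A}_n$ are the typical ones rather than a vanishing minority.
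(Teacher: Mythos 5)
This statement is not proved in the paper at all: it is quoted verbatim as \cite[Theorem 1.9]{LS04} (Liebeck--Shalev), and the paper uses it as a black box. So there is no internal proof to compare against. That said, your outline is a faithful reconstruction of the strategy Liebeck and Shalev actually use: the Frobenius-type character formula for $|\Hom_{\mathbf{C}}(\Gamma,\mathfrak{S}_n)|$, the bound $|\chi(g)|\leq \chi(1)^{1-1/m+o(1)}$ for elements of cycle-shape $(m^k,1^f)$ with $f$ bounded (these estimates originate in Fomin--Lulov and the Liebeck--Shalev character-degree papers rather than Larsen--Shalev, which came later, but the content is the one you describe), convergence of the Witten zeta function $\sum_\chi \chi(1)^{-s}$, and the elimination of intransitive, imprimitive and primitive maximal overgroups by comparing $\sum_M |\Hom_{\mathbf{C}}(\Gamma,M)|$ with the main term $\sim 2\prod_i|C_i|\cdot|\mathfrak{S}_n|^{2g'-1}$. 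The Fuchsian condition $2g'-2+\sum_i(1-1/m_i)>0$ enters exactly as you say, to make the exponent in the character tail strictly negative.

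Two small corrections. First, the hypothesis $\prod_i\sgn(C_i)=1$ is needed already for $\Hom_{\mathbf{C}}(\Gamma,\mathfrak{S}_n)$ to be nonempty (the long relation forces the product of the signs of the images of the $c_i$ to be $1$), and it makes the sign character contribute $+1$ rather than $-1$ to the main term; but your closing remark that it ensures ``homomorphisms with image inside $\mathfrak{A}_n$ are the typical ones'' misstates the conclusion, which is that the image \emph{contains} $\mathfrak{A}_n$ (it equals $\mathfrak{S}_n$ whenever some $C_i$ is odd). Second, the genuinely hard content --- the uniform character bounds and the subgroup-counting estimates --- is precisely what is outsourced here to \cite{LS04}; your sketch correctly identifies these as the obstacles but does not, and realistically cannot in this space, supply them.
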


Using Theorem~\ref{thm.conj.An}, Liebeck and Shalev deduced the
following Corollary regarding $\mathfrak{S}_n$.

\begin{cor}\label{cor.conj.Sn} \cite[Theorem 1.10]{LS04}.
Let $\Gamma=\Gamma(-|m_1,\dots,m_r)$ be a polygonal group which
satisfies the above inequality~\eqref{eq.Fuchsian}, and assume that
at least two of $m_1,\dots,m_r$ are even. Then $\Gamma$ surjects to
all but finitely many symmetric groups $\mathfrak{S}_n$.
\end{cor}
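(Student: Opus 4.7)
The plan is to apply Theorem \ref{thm.conj.An} to $\Gamma = \Gamma(0 \mid m_1, \ldots, m_r)$, with a careful choice, for each sufficiently large $n$, of a tuple of conjugacy classes $\mathbf{C} = (C_1, \ldots, C_r)$ in $\mathfrak{S}_n$. Since $g'=0$, inequality \eqref{eq.Fuchsian} becomes $\sum_{i=1}^r (1-1/m_i)>2$, which is exactly the standing hypothesis, so $\Gamma$ is Fuchsian and Theorem \ref{thm.conj.An} is available. The idea is to build $\mathbf{C}$ so that it satisfies all the hypotheses of that theorem and, in addition, contains at least one class of odd permutations; then any $\phi \in \Hom_{\mathbf{C}}(\Gamma, \mathfrak{S}_n)$ whose image contains $\mathfrak{A}_n$ is automatically surjective onto $\mathfrak{S}_n$.

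For each $n$ and each $i$, I would first take $C_i$ to be the class of cycle-shape $(m_i^{k_i}, 1^{f_i})$, where $n = m_i k_i + f_i$ with $0 \le f_i < m_i$. Then $\ord(C_i) = m_i$, and $f_i < \max_j m_j$ gives the uniform bound required by Theorem \ref{thm.conj.An}. The sign is $\sgn(C_i) = (-1)^{k_i(m_i - 1)}$, equal to $+1$ whenever $m_i$ is odd and to $(-1)^{k_i}$ whenever $m_i$ is even. For each even $m_i$ the parity of $k_i$ can be toggled at the cost of enlarging $f_i$ by $m_i$, and this does not destroy the uniform bound. Using the hypothesis that at least two of the $m_i$ are even, say $m_1$ and $m_2$, I arrange by such toggles that $k_1$ and $k_2$ are both odd (so $\sgn(C_1) = \sgn(C_2) = -1$), while for every further even $m_i$ the exponent $k_i$ is even. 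The resulting $\mathbf{C}$ satisfies $\prod_i \sgn(C_i) = 1$, so all hypotheses of Theorem \ref{thm.conj.An} are met, and $C_1$ consists of odd permutations.

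Applying Theorem \ref{thm.conj.An} to this $\mathbf{C}$ yields, for all $n$ large enough, a homomorphism $\phi \in \Hom_{\mathbf{C}}(\Gamma, \mathfrak{S}_n)$ whose image contains $\mathfrak{A}_n$. Since $\phi(c_1) \in C_1$ is an odd permutation, the image of $\phi$ contains an element outside $\mathfrak{A}_n$ and therefore equals $\mathfrak{S}_n$. This produces the desired surjection $\Gamma \twoheadrightarrow \mathfrak{S}_n$ for all but finitely many $n$.

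The only delicate point is the sign bookkeeping in the construction of $\mathbf{C}$, and the hypothesis that at least two of the $m_i$ are even is used precisely there: with only one even $m_i$ one cannot simultaneously have $\prod_i \sgn(C_i) = 1$ and at least one odd-sign $C_i$, so the image would be trapped inside $\mathfrak{A}_n$. Once $\mathbf{C}$ has been chosen, Theorem \ref{thm.conj.An} does all the heavy lifting.
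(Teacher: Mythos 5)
Your proof is correct and is essentially the argument the paper intends: the paper itself gives no proof, simply citing \cite[Theorem 1.10]{LS04} and noting that the corollary is deduced from Theorem~\ref{thm.conj.An}, which is exactly the deduction you carry out (choose almost homogeneous classes with $\prod_i\sgn(C_i)=1$ and at least one odd class, using the two even $m_i$ to balance the signs). The sign bookkeeping and the toggling of $k_i$ at bounded cost to $f_i$ are handled correctly.
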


As consequences of these results we recall the following theorems of
\cite{GP}.

\begin{theo}\label{cor.unmixed.ram.An}
Let $\tau_1 = (m_{1,1},\dots,m_{1,r_1})$ and $\tau_2 =
(m_{2,1},\dots,m_{2,r_2})$ be two sequences of natural numbers such
that $m_{i,k} \geq 2$ and $\sum_{k=1}^{r_i}(1-1/m_{i,k}) > 2$ for
$i=1,2$. Then almost all alternating groups $\mathfrak{A}_n$ admit an unmixed
ramification structure of type $(\tau_1,\tau_2)$.
\end{theo}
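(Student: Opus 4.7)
The plan is to reduce the existence of an unmixed ramification structure to two separate applications of Theorem~\ref{thm.conj.An}, one for each type $\tau_i$, with carefully chosen almost-homogeneous conjugacy classes. Disjointness will then come for free from a fixed-point argument.

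First, for each $i=1,2$ and each $k=1,\dots,r_i$, I would choose a conjugacy class $C_{i,k}$ in $\mathfrak{S}_n$ of almost-homogeneous cycle-shape $(m_{i,k}^{a_{i,k}}, 1^{f_{i,k}})$, where $a_{i,k}\,m_{i,k} + f_{i,k} = n$ and the $f_{i,k}$ remain bounded independently of $n$. The parameters must also satisfy $\prod_{k=1}^{r_i} \sgn(C_{i,k}) = 1$ for each $i$, so that the generators land inside $\mathfrak{A}_n$; this is achieved by adjusting the $a_{i,k}$ by multiples of $2$ in each fixed residue class. Since $\sum_k (1-1/m_{i,k}) > 2 > 0$, the orbifold surface group $\Gamma_i := \Gamma(0 \mid m_{i,1},\dots, m_{i,r_i})$ is Fuchsian, and Theorem~\ref{thm.conj.An} applied to $\mathbf{C}_i = (C_{i,1},\dots, C_{i,r_i})$ guarantees that, for all sufficiently large $n$, there exists $\phi_i \in \Hom_{\mathbf{C}_i}(\Gamma_i, \mathfrak{S}_n)$ whose image contains $\mathfrak{A}_n$. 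Because each $\phi_i(c_k)$ is even, the image equals $\mathfrak{A}_n$, so $T_i := (\phi_i(c_1),\dots,\phi_i(c_{r_i}))$ is a spherical $r_i$-system of generators of $\mathfrak{A}_n$ of type $\tau_i$.

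The key observation for disjointness is this: if $x\in \mathfrak{S}_n$ has cycle-shape $(m^a, 1^f)$, then for any $1\leq j<m$ with $d:=\gcd(j,m)$, the element $x^j$ has cycle-shape $((m/d)^{a d}, 1^f)$. In particular every non-trivial power of $x$ has exactly $f$ fixed points. Consequently, if we additionally arrange the $r_1+r_2$ fixed-point counts $\{f_{1,k}\}_{k=1}^{r_1}\cup\{f_{2,l}\}_{l=1}^{r_2}$ to be pairwise distinct, no non-trivial power of an element of $T_1$ can be $\mathfrak{S}_n$-conjugate (hence $\mathfrak{A}_n$-conjugate) to any non-trivial power of an element of $T_2$, and $\Sigma(T_1)\cap\Sigma(T_2) = \{1\}$ follows directly from Definition~\ref{defn.sphergen}.

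The main obstacle is the combinatorial juggling in the choice of $(a_{i,k}, f_{i,k})$: simultaneously the constraint $f_{i,k}\equiv n\pmod{m_{i,k}}$, boundedness of $f_{i,k}$, the sign parities, and pairwise distinctness of the $r_1+r_2$ values $f_{i,k}$ must all hold for every large $n$. This is handled by observing that within the residue class $n \bmod m_{i,k}$ there are infinitely many bounded representatives once $n$ is large, so one may pick $f_{i,k}$ from a fixed interval $[0, M]$ with $M$ depending only on $r_1+r_2$ and $\max m_{i,k}$; pairwise distinctness is then a finite constraint that is easily satisfied, and a $\pm m_{i,k}$ shift in $a_{i,k}$ absorbs any sign correction without affecting boundedness of $f_{i,k}$. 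With these parameters fixed once and for all, the two applications of Theorem~\ref{thm.conj.An} yield the required pair $(T_1, T_2)$ for almost all $n$.
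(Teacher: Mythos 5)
Your proposal is correct and follows essentially the same route as the paper: it is exactly Algorithm~\ref{alg.conj.class} (almost homogeneous classes of the prescribed orders, consisting of even permutations, with pairwise distinct fixed-point counts) combined with Theorem~\ref{thm.conj.An} to produce the two generating systems, and the observation that every non-trivial power of an element of cycle-shape $(m^a,1^f)$ still has exactly $f$ fixed points to force $\Sigma(T_1)\cap\Sigma(T_2)=\{1\}$. Your explicit handling of the congruence, boundedness, sign, and distinctness constraints on the $f_{i,k}$ fills in details the paper delegates to the cited algorithm, but the argument is the same.
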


\begin{theo}\label{cor.unmixed.ram.Sn}
Let $\tau_1 = (m_{1,1},\dots,m_{1,r_1})$ and $\tau_2 =
(m_{2,1},\dots,m_{2,r_2})$ be two sequences of natural numbers such
that $m_{i,k} \geq 2$, at least two of $(m_{i,1},\dots,m_{i,r_i})$
are even and $\sum_{k=1}^{r_i}(1-1/m_{i,k}) > 2$, for $i=1,2$. Then
almost all symmetric groups $\mathfrak{S}_n$ admit an unmixed ramification
structure of type $(\tau_1,\tau_2)$.
\end{theo}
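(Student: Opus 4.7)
The plan is to reduce the existence of an unmixed ramification structure to two independent applications of the Liebeck--Shalev machinery of Corollary~\ref{cor.conj.Sn}, one for each type $\tau_i$, and then to refine the construction so that the resulting spherical systems are disjoint in the sense of Definition~\ref{defn.sphergen}. Since the generation and order conditions are handled cleanly by Corollary~\ref{cor.conj.Sn}, the real work consists in showing that disjointness can be arranged simultaneously.

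First I would apply Corollary~\ref{cor.conj.Sn} to each of the two polygonal groups $\Gamma(0 \mid \tau_i)$: the hypotheses ``at least two $m_{i,k}$ are even'' and $\sum_k(1-1/m_{i,k}) > 2$ are exactly what that corollary needs. This yields, for every sufficiently large $n$, an epimorphism $\theta_i: \Gamma(0 \mid \tau_i) \twoheadrightarrow \mathfrak{S}_n$, and the images of the canonical generators $c_1, \ldots, c_{r_i}$ form a spherical $r_i$-system $T_i$ of type $\tau_i$ generating $\mathfrak{S}_n$. Taken individually, this already produces spherical generating systems of each prescribed type.

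Next I would strengthen the construction by using the finer statement of Theorem~\ref{thm.conj.An}, which permits one to \emph{prescribe} the conjugacy classes $\mathbf{C}_i = (C_{i,1}, \ldots, C_{i,r_i})$ of almost-homogeneous cycle shape $(m_{i,k}^{k_{i,k}}, 1^{f_{i,k}})$, with $f_{i,k}$ bounded and $\prod_k \sgn(C_{i,k}) = 1$, and still have a positive proportion of the constrained homomorphisms be surjective. Because $\Sigma(T_i)$ is a union of $\mathfrak{S}_n$-conjugacy classes (consisting of powers of conjugates of the $x_{i,k}$), the disjointness condition $\Sigma(T_1) \cap \Sigma(T_2) = \{1\}$ becomes a purely combinatorial constraint on cycle shapes: no non-trivial power of any $x_{1,k}$ should be conjugate to any non-trivial power of any $x_{2,\ell}$. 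I would arrange this by choosing the bounded fixed-point counts $f_{1,k}$ and $f_{2,\ell}$ in disjoint ranges, so that the cycle shapes of elements of $T_1$ and all of their powers are distinguished from those in $T_2$ by a prescribed gap in fixed-point counts.

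The main obstacle I anticipate is the simultaneous satisfaction of three constraints: the parity condition $\prod_k \sgn(C_{i,k}) = 1$ required to apply Theorem~\ref{thm.conj.An}, the requirement that the image be \emph{all} of $\mathfrak{S}_n$ (rather than merely containing $\mathfrak{A}_n$), and the cycle-shape separation needed for disjointness. The ``at least two even entries'' hypothesis provides precisely the flexibility needed to reconcile the first two: the sign of an almost-homogeneous element of cycle shape $(m^k, 1^f)$ with $m$ even depends on the parity of $k$, so two even orders let me toggle the parities of individual $C_{i,k}$ independently of my fixed-point choices. Verifying that this bookkeeping can be carried out for almost all $n$ is the delicate step that completes the argument.
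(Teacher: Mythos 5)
Your proposal follows essentially the same route as the paper: you invoke the Liebeck--Shalev result (Theorem~\ref{thm.conj.An}) for prescribed almost homogeneous conjugacy classes, separate the two systems by their bounded, pairwise distinct fixed-point counts to force $\Sigma(T_1)\cap\Sigma(T_2)=\{1\}$, and use the two even orders to adjust signs so that $\prod_k\sgn(C_{i,k})=1$ while the image is all of $\mathfrak{S}_n$ rather than just $\mathfrak{A}_n$ --- which is exactly Algorithm~\ref{alg.conj.class} combined with Theorem~\ref{thm.conj.An} as in the paper and in \cite{GP}. The argument is correct.
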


The proofs of the two theorems are based on the following generalization of the algorithm 
appearing in \cite{GP}.

\begin{alg}\label{alg.conj.class}
Given two sequences of natural numbers 
$\tau_1 = (m_{1,1},\dots,m_{1,r_1})$ and $\tau_2 = (m_{2,1},\dots,m_{2,r_2})$, such
that $m_{i,k} \geq 2$, then one can choose $r_1+r_2$ almost homogeneous conjugacy classes
$C_{m_{1,1}}$, $\ldots ,C_{m_{1,r_1}}$, $C_{m_{2,1}}, \ldots ,C_{m_{2,r_2}}$ in $\mathfrak{S}_n$, of
orders $m_{1,1}, \dots ,m_{1,r_1},m_{2,1}, \dots ,m_{2,r_2}$ respectively, such that they
contain only even permutations, and they all have different numbers
of fixed points.
\end{alg}

We can now prove Theorems~\ref{thm.moduli.An}(a) and~\ref{thm.moduli.Sn}(a).

\begin{proof}[Proof of Theorem~\ref{thm.moduli.An}(a)]
Let $\tau_1 = (m_{1,1},\dots,m_{1,r_1})$ and $\tau_2 =
(m_{2,1},\dots,m_{2,r_2})$ be two sequences of natural numbers such
that $m_{j,l} \geq 2$ and $\sum_{l=1}^{r_j}(1-1/m_{j,l}) > 2$ for
$j=1,2$. Let $k \in \mathbb{N}$ be an arbitrary integer,
and assume that $n$ is large enough. By slightly modifying
Algorithm~\ref{alg.conj.class}, we may actually choose $(r_1+r_2)k$ almost
homogeneous conjugacy classes in $\mathfrak{S}_n$,
\[
\{C_{m_{1,1},i}, \ldots ,C_{m_{1,r_1},i},C_{m_{2,1},i}, \ldots ,C_{m_{2,r_2},i}\}_{i=1}^k,
\]
which contain even permutations, such that every $r_1+r_2$ classes have
orders $m_{1,1}, \dots ,m_{1,r_1},m_{2,1}, \dots ,m_{2,r_2}$ respectively, and all the $(r_1+r_2)k$
conjugacy classes have different numbers of fixed points.

Hence, if $n$ is large enough, there are $(r_1+r_2)k$ different
$\mathfrak{S}_n$-conjugacy classes in $\mathfrak{A}_n$, and moreover, for each $1\leq
i_1,\ldots,i_{r_1},j_1,\ldots,j_{r_2} \leq k$, the conjugacy classes
$(C_{m_{1,1},i_1},\ldots,C_{m_{1,r_1},i_{r_1}})$, 
$(C_{m_{2,1},j_1},\ldots,C_{m_{2,r_2},j_{r_2}})$ 
contain an unmixed ramification structure $(T_1,T_2)$ of type $(\tau_1,\tau_2)$,
by Theorem~\ref{thm.conj.An} (see the proof of Theorem~\ref{cor.unmixed.ram.An}).

From Lemma~\ref{lem.two.Br.action}, since $\mathfrak{S}_n = \Aut(\mathfrak{A}_n)$ (for
$n>6$), we deduce that if $n$ is large enough, then
$h(\mathfrak{A}_n;\tau_1,\tau_2) \geq k^{r_1+r_2}$. Now, $k$ can be arbitrarily
large, therefore,
\[
    h(\mathfrak{A}_n;\tau_1,\tau_2) \stackrel{n \rightarrow \infty}{\longrightarrow}  \infty.
\]

Moreover, as the number of different almost homogeneous conjugacy
classes in $\mathfrak{S}_n$ of some certain order grows linearly in $n$, the
proof actually shows that $h=\Omega(n^{r_1+r_2})$.
\end{proof}

Similarly, we can show that if $\tau_1 = (m_{1,1},\dots,m_{1,r_1})$ and $\tau_2 =
(m_{2,1},\dots,m_{2,r_2})$ are two sequences of natural numbers such
that $m_{i,l} \geq 2$, at least two of $(m_{l,1},\dots,m_{l,r_l})$
are even and $\sum_{l=1}^{r_i}(1-1/m_{i,l}) > 2$, for $i=1,2$, then
\[
    h(\mathfrak{S}_n;\tau_1,\tau_2) \stackrel{n \rightarrow \infty}{\longrightarrow}
    \infty,
\]
and moreover, $h=\Omega(n^{r_1+r_2})$, thus proving
Theorem~\ref{thm.moduli.Sn}(a).

\subsection{Hurwitz components for $\PSL(2,p)$}\label{subsec.Hu.psl}

In this section we prove Theorem~\ref{thm.moduli.PSLp}. The proof is based on well-known
properties of the group $\PSL(2,p^e)$ (see for example \cite{Su}) and on
results of Macbeath~\cite{Ma}.


Let $q = p^e$, where $p$ is an odd prime and $e \geq 1$. Recall
that $\GL(2,q)$ is the group of invertible $2 \times 2$ matrices
over the finite field with $q$ elements, which we denote by $\FQ$,
and $\SL(2,q)$ is the subgroup of $\GL(2,q)$ comprising the matrices
with determinant $1$. Then $\PGL(2,q)$ and $\PSL(2,q)$ are the
quotients of $\GL(2,q)$ and $\SL(2,q)$ by their respective centers.
Moreover we can identify $\PSL(2,q)$ with a normal subgroup of
index $2$ in $\PGL(2,q)$.

Since all non-trivial elements in $\PSL(2,q)$, whose pre-images in $\SL(2,q)$
have the same trace, are conjugate in $\PGL(2,q)$, all of them have
the same order in $\PSL(2,q)$. Therefore, we may denote by
$\mathcal{O}rd(\al)$ the order in $\PSL(2,q)$ of the image of a non-trivial
matrix $A\in \SL(2,q)$ whose trace equals $\al$, and denote, for an
integer $l$,
\[
    \Tr_l = \{\al \in \mathbb{F}_q: \mathcal{O}rd(\al)=l\}.
\]
Note that since $q$ is odd then $\al \in \Tr_l$ if and only if $-\al
\in \Tr_l$.

Now, one can easily compute the size of $\Tr_l$ for any integer $l$.

\begin{lem}\label{lem.PSL.odd.Tr}
Let $p$ be an odd prime and let $q=p^e$. Then in $\PSL(2,q)$,
\begin{enumerate}\renewcommand{\theenumi}{\it \roman{enumi}}
\item $\Tr_p=\{\pm 2\}$ and so $|\Tr_p|=2$.
\item $\Tr_2=\{0\}$ and so $|\Tr_2|=1$.
\item If $r \geq 3$ and $r \mid \frac{q \pm 1}{2}$ then
$|\Tr_r|=\phi(r)$, where $\phi$ is the Euler function.
\item For other values of $r$, $|\Tr_r|=0$.
\end{enumerate}
\end{lem}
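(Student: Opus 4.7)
The plan is to analyze everything through the eigenvalues of a lift $A \in \SL(2,q)$ of a given element of $\PSL(2,q)$. Since $A$ satisfies its characteristic polynomial $x^2 - \alpha x + 1$ with $\alpha := \tr(A)$, its eigenvalues in $\overline{\FQ}^*$ are $\lambda, \lambda^{-1}$ satisfying $\lambda + \lambda^{-1} = \alpha$, and the image of $A$ in $\PSL(2,q)$ has order equal to the smallest $k \geq 1$ with $\lambda^k = \pm 1$. I would first dispose of the degenerate cases. If $\alpha = \pm 2$ then $\lambda = \pm 1$ is a double eigenvalue, so $A = \pm(I+N)$ with $N$ nilpotent; using $(I+N)^k = I+kN$ and $p$ odd, one checks directly that the image has order $p$ whenever $N \neq 0$. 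Conversely, factoring $x^p - 1 = (x-1)^p$ and $x^{2p} - 1 = (x^2-1)^p$ in characteristic $p$ shows that any eigenvalue with $\lambda^p = \pm 1$ must satisfy $\lambda = \pm 1$, forcing $\alpha = \pm 2$, which gives (i). If $\alpha = 0$ then $\lambda^2 = -1$, so $A^2 = -I$ and the image has order $2$; conversely, an image of order $2$ forces $A^2 = -I$ (since $A^2 = I$ together with $\det A = 1$ would give $A = \pm I$), hence $\alpha = 0$. This gives (ii).

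For $\alpha \notin \{0, \pm 2\}$ the matrix $A$ is semisimple with $\lambda \neq \pm 1$. Setting $d := \ord(\lambda)$ in $\overline{\FQ}^*$, the image in $\PSL(2,q)$ has order $d$ when $d$ is odd (since $-1 \notin \langle \lambda \rangle$) and order $d/2$ when $d$ is even (since then $\lambda^{d/2} = -1$ is the unique element of order $2$ in $\langle \lambda \rangle$). Moreover, $\alpha \in \FQ$ forces either $\lambda \in \FQ$, so $d \mid q-1$, or $\lambda^q = \lambda^{-1}$, so $d \mid q+1$. Fixing $r \geq 3$ with $r \neq p$, the image has order $r$ precisely when either $d = r$ with $r$ odd, or $d = 2r$; and since $q$ is odd, each of the existence conditions $d \mid q \pm 1$ translates to $r \mid (q-1)/2$ or $r \mid (q+1)/2$, these two divisibilities being mutually exclusive for $r \geq 2$ because $\gcd\bigl((q-1)/2,(q+1)/2\bigr) = 1$.

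Finally, for the counting, the map $\lambda \mapsto \lambda + \lambda^{-1}$ is exactly $2$-to-$1$ on $\{\lambda \in \overline{\FQ}^* : \lambda \neq \pm 1\}$, and the number of elements of order exactly $d$ equals $\phi(d)$. For odd $r \geq 3$ this produces $\phi(r)/2 + \phi(2r)/2 = \phi(r)$ values of $\alpha$, using $\phi(2r) = \phi(r)$ for odd $r$; for even $r$ it produces $\phi(2r)/2 = \phi(r)$ values of $\alpha$, using $\phi(2r) = 2\phi(r)$ for even $r$. This proves (iii), while the failure of both divisibility conditions directly gives (iv). The main subtlety to verify is that contributions from $d = r$ and $d = 2r$ give disjoint sets of $\alpha$ (clear since $\lambda$ cannot have two distinct orders, and $\lambda^{-1}$ has the same order as $\lambda$); once this is established, the remainder is a routine Euler-function calculation.
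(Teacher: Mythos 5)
Your proof is correct and follows essentially the same route as the paper's: both arguments reduce to counting eigenvalues (roots of unity of order $r$ or $2r$ in the split or non-split torus) and use the fact that $\lambda \mapsto \lambda + \lambda^{-1}$ is $2$-to-$1$ away from $\lambda = \pm 1$. The paper only writes out part \emph{(iii)} (via diagonal matrices built from a primitive $2r$-th root of unity, with the odd/even $r$ split matching your $d=r$ versus $d=2r$ bookkeeping) and declares the rest immediate, whereas you supply the details for \emph{(i)}, \emph{(ii)} and \emph{(iv)} as well.
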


This Lemma is immediate, but for the convenience of the
reader we shall present a proof of part {\it(iii)}.

\begin{proof}[Proof of Lemma~\ref{lem.PSL.odd.Tr} (iii)]
Assume that $r \mid \frac{q-1}{2}$ (if $r \mid \frac{q+1}{2}$ then the proof is similar).
Let $\la$ be a primitive root of unity of order $2r$ in
$\mathbb{F}_q$, then there are $2\phi(r)$ diagonal matrices whose images in
$\PSL(2,q)$ have exact order $r$, parametrized by $\{\pm \la^i:
1\leq i \leq 2r, (i,2r)=1\}$, if $r$ is odd, or by $\{\pm \la^i:
1\leq i \leq r, (i,2r)=1\}$, if $r$ is even. Hence, there are
exactly $\phi(r)$ different traces of diagonal matrices of order $r$, 
given as $\{\pm \al_1,\dots,\pm \al_\psi\}$,
where $\psi=\frac{\phi(r)}{2}$.
\end{proof}


In order to estimate the number of Hurwitz components for
$\PSL(2,p)$, we would first like to estimate the number
$d(\PSL(2,p);\tau)$ for certain types $\tau$ (see
Corollaries~\ref{cor.dG} and~\ref{cor.hG}).

Recall that by Corollary~\ref{cor.dG},
\[
   d(\PSL(2,p);\tau) = \#\{O^{\PSL(2,p)}_{T^{un}}: T \in
   \mathcal{S}(\PSL(2,p),\tau)\}.
\]

Thus, $d(\PSL(2,p);\tau) = 2d'(\PSL(2,p),\tau)$, where
\[
   d'(\PSL(2,p);\tau) := \#\{O^{\PGL(2,p)}_{T^{un}}: T \in
   \mathcal{S}(\PSL(2,p),\tau)\}.
\]

One can compute $d'(\PSL(2,p),\tau)$ using the following Lemma, 
which follows from results of Macbeath \cite{Ma}.

\begin{lem} \label{lem.l.m.n}
Let $2 \leq l \leq m \leq n$ and assume that $m>2$ and $n>5$. Then
\begin{multline*}
    d'(\PSL(2,p);(l,m,n))=
    \# \bigl\{ (\pm\al,\pm\be,\pm\ga): \al \in \Tr_l, \be \in \Tr_m, \ga \in
    \Tr_n, \\ \text{ and either }\al^2+\be^2+\ga^2 - \al\be\ga \neq 4 
    \text{ or } \al^2+\be^2+\ga^2 + \al\be\ga \neq 4 \bigr\}.
\end{multline*}
\end{lem}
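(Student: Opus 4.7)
The plan is to combine Fricke's classical trace parametrization of pairs in $\SL(2,p)$ with Macbeath's generation criterion \cite{Ma}. For any $(\al,\be,\ga) \in \mathbb{F}_p^3$ there is a unique $\PGL(2,p)$-conjugacy class of triples $(x,y,z) \in \SL(2,p)^3$ with $xyz = I$ and $(\tr x, \tr y, \tr z) = (\al,\be,\ga)$; a direct calculation gives the commutator-trace identity
\[
    \tr[x,y] \;=\; \al^2 + \be^2 + \ga^2 - \al\be\ga - 2,
\]
so the pair $(x,y)$ generates an absolutely irreducible subgroup precisely when $\al^2+\be^2+\ga^2 - \al\be\ga \neq 4$.

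To upgrade absolute irreducibility to generation of all of $\PSL(2,p)$, I would invoke Dickson's classification of subgroups of $\PSL(2,p)$ (as organized by Macbeath) and use the order hypotheses $m > 2$ and $n > 5$ to eliminate every exceptional case: no subfield subgroups exist since $p$ is prime; dihedral subgroups require at least two involutions in the generating triple, which is excluded because $m,n \geq 3$; the groups $A_4$, $S_4$, $A_5$ have no element of order exceeding $5$, contradicting $n > 5$. The only remaining proper possibility is a Borel (upper-triangular) subgroup, which corresponds exactly to $(x,y)$ being reducible, i.e.\ $\tr[x,y] = 2$. Thus under the stated hypotheses, the $\SL$-triple projects to a generating triple of $\PSL(2,p)$ if and only if $\al^2+\be^2+\ga^2-\al\be\ga \neq 4$.

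Finally, I would pass from $\SL$-triples to unordered $\PSL$-triples counted by $d'$. Each unordered $\PSL(2,p)$-triple $(X,Y,Z)$ with $XYZ = 1$ has eight $\SL(2,p)$-lifts, four with product $I$ (\emph{even parity}) and four with product $-I$ (\emph{odd parity}); their trace triples assemble into a single sign-orbit $(\pm\al,\pm\be,\pm\ga)$ in $\Tr_l \times \Tr_m \times \Tr_n$. Replacing a single lift by its negative (an odd-parity change) flips the sign of $\al\be\ga$ while preserving $\al^2+\be^2+\ga^2$, so the commutator-trace formula reads $\al^2+\be^2+\ga^2-\al\be\ga-2$ on even-parity representatives and $\al^2+\be^2+\ga^2+\al\be\ga-2$ on odd-parity ones. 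The sign-orbit therefore corresponds to a generating $\PSL(2,p)$-triple if and only if at least one of these two values differs from $2$, which is precisely the ``either $\al^2+\be^2+\ga^2 - \al\be\ga \neq 4$ or $\al^2+\be^2+\ga^2 + \al\be\ga \neq 4$'' of the lemma.

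The main obstacle is the sign bookkeeping in the last paragraph: matching the eight $\SL$-lifts of a single $\PSL$-triple to the eight sign-combinations of $(\al,\be,\ga)$, handling the degenerate sign-orbits where some trace vanishes (which have size smaller than $8$), and checking that passage to \emph{unordered} triples does not double-count under the hypothesis $2 \le l \le m \le n$. The subgroup case analysis, while standard, also requires verifying that the hypotheses $m > 2$ and $n > 5$ are sharp for Dickson's list.
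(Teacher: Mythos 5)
Your first two paragraphs re-derive what the paper simply cites: the paper's proof invokes Macbeath's Theorem 1 for existence of an $\SL(2,p)$-triple with prescribed traces and Macbeath's Theorem 4 for the generation criterion, whereas you reconstruct the latter from the Fricke commutator-trace identity plus Dickson's subgroup classification. That part is sound (the hypotheses $m>2$, $n>5$ do exclude the dihedral and $\mathfrak{A}_4,\mathfrak{S}_4,\mathfrak{A}_5$ cases exactly as you say, and primality of $p$ kills subfield subgroups), and it is a legitimate, more self-contained route to the same criterion.

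The genuine gap is in your third paragraph, and it is precisely the step the paper covers by citing Macbeath's Theorem 3. Two problems. First, of the eight $\SL(2,p)$-lifts of a projective triple $(\bar X,\bar Y,\bar Z)$, only the four even-parity ones have product $I$; the product-one $\SL$-triples whose traces realize an \emph{odd} sign change of $(\al,\be,\ga)$ are not lifts of $(\bar X,\bar Y,\bar Z)$ at all but project to an a priori \emph{different} projective triple. Second, and decisively, $[\epsilon_1X,\epsilon_2Y]=[X,Y]$, so $\tr[X,Y]$ is independent of the choice of lifts: a single projective pair has \emph{one} commutator trace, not two. The two values $\al^2+\be^2+\ga^2\mp\al\be\ga-2$ are the commutator traces of the two (even- and odd-parity) projective triples attached to one sign-orbit, and when $\al\be\ga\neq 0$ these values differ, so those two triples cannot be identified by the fixed-trace Fricke rigidity you invoke — that rigidity only gives conjugacy within one parity class. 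Consequently your argument does not establish that a sign-orbit corresponds to a single $\PGL(2,p)$-orbit of generating triples, which is exactly what the equality $d'=\#\{(\pm\al,\pm\be,\pm\ga):\dots\}$ requires; the paper gets this from Macbeath's Theorem 3 in its projective form (conjugacy of triples whose traces agree up to \emph{arbitrary} sign changes), and without that input, or an independent proof of it, your count is not justified. You flag the sign bookkeeping as an obstacle, but the specific mechanism you propose for it (assigning both parities to the eight lifts of one triple) is the point at which the argument breaks.
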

\begin{proof}
Let $(\al,\be,\ga) \in \mathbb{F}_p^3$ such that 
$\al \in \Tr_l$, $\be \in \Tr_m$ and $\ga \in \Tr_n$.
In this case, also $-\al \in \Tr_l$, $-\be \in \Tr_m$ and $-\ga \in \Tr_n$.

Then, by \cite[Theorem 1]{Ma}, there exist three matrices $A,B,C \in \SL(2,p)$
such that $ABC=1$, $\tr(A)=\al$, $\tr(B)=\be$ and $\tr(C)=\ga$. 
Denote by $\bar{A}$ the image of a matrix $A$ in $\PSL(2,p)$.
Then, $\bar{A}$ has order $l$, $\bar{B}$ has order $m$ and $\bar{C}$ has order $n$.

By \cite[Theorem 4]{Ma}, the group generated by $\bar{A}$ and $\bar{B}$ is $\PSL(2,p)$
if and only if either $\al^2+\be^2+\ga^2 - \al\be\ga \neq 4$ or
$\al^2+\be^2+\ga^2 + \al\be\ga \neq 4$.

In addition, if there exist some other matrices $A`,B`,C` \in \SL(2,p)$
such that $A`B`C`=1$, $\tr(A`)=\pm \al$, $\tr(B`)=\pm \be$ and $\tr(C`)=\pm \ga$
then by \cite[Theorem 3]{Ma}, there is some $g \in \PGL(2,p)$ such that
$g \bar{A} g^{-1} = \bar{A`}$ and $g \bar{B} g^{-1} = \bar{B`}$ 
implying that also $g \bar{C} g^{-1} = g \bar{B}^{-1}\bar{A}^{-1} g^{-1} = \bar{C`}$.
\end{proof}

\begin{cor} \label{cor.d.PSL}
Let $p \geq 5$ be an odd prime, then in $\PSL(2,p)$,
\begin{enumerate}\renewcommand{\theenumi}{\it \roman{enumi}}

\item $d'(\PSL(2,p);(2,3,p)) = 1$.

\item If $r \geq 7$ and $r \mid
\frac{p \pm 1}{2}$ then $d'(\PSL(2,p);(2,3,r)) =
\frac{\phi(r)}{2}$.

\item $d'(\PSL(2,p);(p,p,p)) = 1$.

\item If $r
\geq 7$ and $r \mid \frac{p \pm 1}{2}$ then
\[
d'(\PSL(2,p);(r,r,r)) = \frac{\psi(\psi+1)(\psi+2)}{6},
\]
where $\psi=\frac{\phi(r)}{2}$.

\item If $2 < l < m < n$ such that $n>5$ and $l,m,n$ all divide $\frac{p \pm
1}{2}$, then
\[
d'(\PSL(2,p);(l,m,n)) = \frac{\phi(l)\phi(m)\phi(n)}{8}.
\]

\item If $2 \leq l \leq m \leq n$ such that $m>2$ and $n>5$ then
\[
d'(\PSL(2,p);(l,m,n)) \leq \frac{\phi(l)\phi(m)\phi(n)}{8}.
\]
\end{enumerate}
\end{cor}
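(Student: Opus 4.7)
The plan is to combine Lemma \ref{lem.l.m.n}, which parametrizes the $\PGL(2,p)$-orbits of unordered spherical triples by triples of $\pm$-pairs of traces satisfying a non-degeneracy condition, with Lemma \ref{lem.PSL.odd.Tr}, which counts traces of a prescribed order. First I would read off the number of $\pm$-pairs in each of $\Tr_l, \Tr_m, \Tr_n$ from Lemma \ref{lem.PSL.odd.Tr}; then form the appropriate product or multiset coefficient according to the pattern of repetitions among $l,m,n$; finally, verify case by case that each counted triple indeed satisfies the non-degeneracy condition $\alpha^2+\beta^2+\gamma^2\pm\alpha\beta\gamma\neq 4$ of Lemma \ref{lem.l.m.n}.

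For (i) and (iii), every relevant trace set contributes exactly one $\pm$-pair: one has $|\Tr_2|=1$, $|\Tr_p|/2=1$, and $|\Tr_3|/2=1$ because for any odd prime $p\geq 5$ exactly one of $(p\pm 1)/2$ is divisible by $3$. For (ii) the count of triples of $\pm$-pairs is $1\cdot 1\cdot\phi(r)/2$. In (iv), where all three entries equal $r$, I would use the multiset coefficient $\binom{\psi+2}{3}=\psi(\psi+1)(\psi+2)/6$ with $\psi=\phi(r)/2$. In (v), where the three orders are all distinct, it is the straightforward product $\phi(l)\phi(m)\phi(n)/8$. Part (vi) drops the non-degeneracy requirement and so follows at once as an upper bound on the count appearing in (v).

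The technical core is verifying non-degeneracy for each counted triple. In (i) and (ii) the trace $\alpha=0$ kills $\alpha\beta\gamma$, so both expressions collapse to $\beta^2+\gamma^2$. In (i) this equals $1+4=5\neq 4$; in (ii) I would use that a trace $\beta$ of an order-$3$ element in $\PSL(2,p)$ satisfies $\beta^2=1$, and that $\gamma^2=3$ would force the element of trace $\gamma$ to have order $6$, contradicting $r\geq 7$. In (iii), $\alpha=\beta=\gamma\in\{\pm 2\}$ gives $\alpha^2+\beta^2+\gamma^2\pm\alpha\beta\gamma\in\{4,20\}$, and the value $20$ witnesses non-degeneracy. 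For (iv) and (v), with $l,m,n\geq 3$ and at least one order exceeding $5$, I would argue that any degenerate triple generates a proper subgroup of $\PSL(2,p)$ by Macbeath's theorem cited in Lemma \ref{lem.l.m.n}; then a Dickson-style classification of the proper subgroups (dihedral groups, $\mathfrak{A}_4$, $\mathfrak{S}_4$, $\mathfrak{A}_5$, and Borel subgroups), together with the order constraints $n>5$ and $l,m,n\mid (p\pm 1)/2$, rules out each possibility, because none of these subgroups can host three non-commuting elements with the prescribed triple of orders.

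The main obstacle will be this subgroup-theoretic exclusion in (iv) and (v); once it is in place, the arithmetic of counting $\pm$-pairs in the trace sets is routine, and (vi) requires no further work beyond the raw count in (v).
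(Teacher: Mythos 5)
Your overall architecture --- parametrizing the $\PGL(2,p)$-orbits by sign-classes of trace triples via Lemma \ref{lem.l.m.n}, counting them with Lemma \ref{lem.PSL.odd.Tr} using products and multiset coefficients, and checking the non-degeneracy condition case by case --- is the paper's, and your treatment of parts (i), (ii), (iii) and (vi) matches the paper's proof essentially verbatim (including the observation that $\ga^2=3$ forces order $6$ in (ii), and the multiset count $\binom{\psi+2}{3}$ in (iv), which agrees with the paper's $\binom{\psi}{3}+2\binom{\psi}{2}+\psi$).

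The gap is in what you call the technical core for (iv) and (v). First, the subgroup-theoretic detour is unnecessary: the condition in Lemma \ref{lem.l.m.n} is a \emph{disjunction}, and the two quantities $\al^2+\be^2+\ga^2\mp\al\be\ga$ differ by $2\al\be\ga$, so they can both equal $4$ only when $\al\be\ga=0$, i.e. only when one of the orders is $2$. Since in (iv) and (v) all of $l,m,n$ exceed $2$, every trace triple automatically satisfies the disjunction; this one-line observation is the paper's entire argument (phrased there as ``replace $(\al_i,\al_j,\al_k)$ by $(-\al_i,-\al_j,-\al_k)$ if necessary''). Second, and more seriously, the exclusion argument you propose in its place would fail. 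It is not true that no proper subgroup of $\PSL(2,p)$ hosts a triple $x,y,z$ with $xyz=1$ and orders $(l,m,n)$: the cyclic torus of order $\frac{p\pm1}{2}$ contains, for odd $r\mid\frac{p\pm1}{2}$, the commuting triple $g,g,g^{-2}$ of orders $(r,r,r)$, and it sits inside a dihedral maximal subgroup. Its trace triple genuinely has $\al^2+\be^2+\ga^2-\al\be\ga=4$. Your qualifier ``non-commuting'' is exactly what you cannot assume: the singular trace triples are precisely those admitting simultaneously diagonalizable realizations, so a degenerate configuration may well consist of commuting elements. What saves the count is not that such configurations are impossible, but that the same sign-class of traces also admits an irreducible realization generating $\PSL(2,p)$ --- which is what the disjunctive form of Lemma \ref{lem.l.m.n} encodes. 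As written, your plan for (iv) and (v) would get stuck at exactly this point.
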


\begin{proof} 
The proof is based on Lemma~\ref{lem.PSL.odd.Tr} and
Lemma~\ref{lem.l.m.n}.

\begin{enumerate}\renewcommand{\theenumi}{\it \roman{enumi}}
\item The orders $(2,3,p)$ correspond to the traces $(0,\pm 1, \pm
2)$.

\item The orders $(2,3,r)$ correspond to the traces $(0,\pm 1, \pm
\ga)$, with $\mathcal{O}rd(\ga)=r$. We need to verify that $0^2 + 1^2 + \ga^2 - 0 \ne 4$. 
Indeed, $0^2 + 1^2 + \ga^2 - 0 = 4$ is equivalent to $\ga^2=3$, and $\ga^2=3$ if and only if
$\mathcal{O}rd(\ga)=6$, a contradiction.

\item The orders $(p,p,p)$ correspond to the traces $(-2,-2,2)$
(see \cite[Theorem 7]{Ma}).

\item The orders $(r,r,r)$ correspond to the traces
$(\pm \al_i,\pm \al_j,\pm \al_k)$ for $1 \leq i \leq j \leq k \leq
\psi$. If $\al_i^2 + \al_j^2 + \al_k^2 - \al_i\al_j\al_k=4$, then
$\al_i^2 + \al_j^2 + \al_k^2 - \al_i\al_j\al_k \neq 4$, hence, if
necessary, we may replace $(\al_i,\al_j,\al_k)$ by
$(-\al_i,-\al_j,-\al_k)$. Therefore,
\[
d'(\PSL(2,p);(r,r,r)) = {\psi \choose 3} + 2{\psi \choose 2} + \psi
= \frac{\psi(\psi+1)(\psi+2)}{6}.
\]

\item The orders $(l,m,n)$ correspond to the traces
$(\al,\be,\ga)$ where $\mathcal{O}rd(\al)=l$,
$\mathcal{O}rd(\be)=m$, $\mathcal{O}rd(\ga)=n$, and $\al,\be,\ga
\neq 0$. Now, we may replace $(\al,\be,\ga)$ by $(-\al,-\be,-\ga)$,
if necessary.

\item This follows from the previous calculations.
\end{enumerate}
\end{proof}

\begin{proof}[Proof of Theorem~\ref{thm.moduli.PSLp}]
Let $p$ be an odd prime, and let $\tau_1=(r_1,s_1,t_1)$ and
$\tau_2=(r_2,s_2,t_2)$ be two hyperbolic types. By
Corollary~\ref{cor.d.PSL}, for $i=1,2$, $d'(\PSL(2,p);(r_i,s_i,t_i))$
is maximal when $r_i,s_i$ and $t_i$ are three different integers
dividing $\frac{p\pm 1}{2}$, and hence is at most
$\frac{\phi(r_i)\phi(s_i)\phi(t_i)}{8}$.

Define the following constant
\[
c :=
\frac{\phi(r_1)\phi(s_1)\phi(t_1)\phi(r_2)\phi(s_2)\phi(t_2)}{16}.
\]
Then, by Corollary~\ref{cor.hG},
\[
h(G;\tau_1,\tau_2) \leq d(G;\tau_1) \cdot d(G;\tau_2) =
2d'(G;\tau_1) \cdot 2d'(G;\tau_2) \leq c.
\]
\end{proof}

\subsection{Ramification structures and Hurwitz components for abelian groups and their extensions}\label{subsec.hur.abelian}
In this Section we generalize previous results regarding abelian
groups and their extensions, which appeared in~\cite{BCG05}.

\subsubsection{Ramification structures of abelian groups}
The following Theorem generalizes \cite[Theorem 3.4]{BCG05} in case
$G$ abelian and $S$ is isogeneous to a higher product (not
necessarily Beauville).

From now on we use the additive notation for abelian groups.

\begin{theo}\label{thm.unmixed.abelian}
Let $G$ be an abelian group, given as
\[
    G \cong \mathbb{Z}/{n_1}\mathbb{Z} \times \dots \times
    \mathbb{Z}/{n_t}\mathbb{Z},
\]
where $n_1 \mid \dots \mid n_t$. For a prime $p$, denote by $l_i(p)$
the largest power of $p$ which divides $n_i$ (for $1\leq i \leq t$).

Let $r_1,r_2 \geq 3$, then $G$ admits an unmixed ramification
structure of size $(r_1,r_2)$ if and only if the following
conditions hold:

\begin{itemize}
\item $r_1,r_2 \geq t+1$;
\item $n_t=n_{t-1}$;
\item If $l_{t-1}(3)>l_{t-2}(3)$ then $r_1,r_2\geq 4$;
\item $l_{t-1}(2)=l_{t-2}(2)$;
\item If $l_{t-2}(2)>l_{t-3}(2)$ then $r_1,r_2\geq 5$ and $r_1,r_2$
are not both odd.
\end{itemize}

\end{theo}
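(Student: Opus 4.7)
The plan is to follow the blueprint of the Beauville case \cite[Theorem 3.4]{BCG05} (which corresponds to $r_1=r_2=3$), treating necessity and sufficiency separately and reducing both directions to a prime-by-prime study of the $p$-Sylow subgroups of $G$.

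\emph{Necessity.} Since $x_r = -(x_1+\cdots+x_{r-1})$ in any spherical system of generators, the first $r-1$ entries must already generate $G$, which forces $r_i \geq t+1$ because $t$ is the minimal number of generators of $G$. For each prime $p$, generation and disjointness both descend to the $p$-Sylow subgroup $G_p \cong \prod_{i=1}^t \mathbb{Z}/p^{l_i(p)}$. Setting $a := l_t(p)$ and $H_p := p^{a-1} G_p$, the $\mathbb{F}_p$-vector space $H_p$ has dimension equal to the number of top cyclic $p$-factors. Every $x \in G_p$ of maximal order $p^a$ satisfies $p^{a-1}x \in H_p \setminus \{0\}$, and the cyclic subgroup $\langle x \rangle$ meets $H_p$ in the line $\mathbb{F}_p \cdot (p^{a-1}x)$. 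Crucially, generation of $G_p$ forces each $T_i$ to contain enough maximal-order elements for their socle-images to span $H_p$, while disjointness forces the lines used by $T_1$ and $T_2$ to be distinct. Each of the four obstructions is obtained by comparing the number of lines required to the $\left(|H_p|-1\right)/(p-1)$ available:
\begin{itemize}
\item if $n_t > n_{t-1}$, pick $p$ with $l_t(p)>l_{t-1}(p)$; then $\dim H_p = 1$ has a single nonzero line, which both $T_1$ and $T_2$ must use;
\item if $l_{t-1}(3)>l_{t-2}(3)$, then $\dim H_3 = 2$, and the sum-zero relation forces any spherical $3$-system to occupy three distinct socle-lines, so two disjoint such systems would need $6$ of only $4$ available lines;
\item if $l_{t-1}(2)>l_{t-2}(2)$, then $\dim H_2 = 2$ gives only $3$ nonzero lines, yet each $T_i$ needs at least $2$ for spanning, a contradiction;
\item if $l_{t-2}(2)>l_{t-3}(2)$, then $\dim H_2 = 3$ with $7$ nonzero lines; a parity analysis of $\sum x_k = 0$ in $\mathbb{F}_2^3$ shows that any spanning spherical system of length $\leq 4$ is impossible, and that an odd-length system must cover at least $4$ lines, so $r_i \geq 5$ and $r_1, r_2$ cannot both be odd (else $4+4>7$).
\end{itemize}

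\emph{Sufficiency.} Assuming all the conditions hold, I would construct $(T_1,T_2)$ one $p$-Sylow at a time: lift two complementary subsets of $H_p \setminus\{0\}$, each separately spanning $H_p$, to elements of maximal $p$-order in $G_p$, and then pad each tuple with repeated elements together with their inverses and with generators of the lower cyclic factors, until the prescribed length $r_i$ and the sum-zero relation are met while $\Sigma(T_1)\cap\Sigma(T_2)$ stays trivial. The global structure is assembled via the Chinese Remainder Theorem, since $\Sigma$ respects the $p$-primary decomposition of $G$. The lower bounds on $r_i$ guarantee that in each $H_p$ the number of disjoint spanning multisets fits, and the parity condition secures solvability of the sum-zero relation in $H_2$ for odd-length tuples.

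The main obstacle I anticipate is the parity clause for $p=2$ with three top $2$-factors: the interplay between the $7$ nonzero lines of $H_2 \cong \mathbb{F}_2^3$, the sum-zero relation in characteristic $2$, and the demand that $T_1$ and $T_2$ be simultaneously spanning and disjoint is genuinely delicate. Carefully tracking multiplicities of each element of $H_2$ in each $T_i$, and showing that odd length forces at least four lines while even length allows only three, is the subtlest step of both the necessity argument and the explicit construction.
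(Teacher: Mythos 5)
Your proposal follows essentially the same route as the paper's own proof: reduction to the primary components $G_p$, the socle subgroup $H_p=p^{k_t-1}G_p$, and a count of the available $\mathbb{F}_p$-lines in $H_p$ (for $p=2,3$ and for the top factor) to derive each necessity condition, followed by a prime-by-prime construction assembled via the Chinese Remainder Theorem for sufficiency. The only difference is one of detail: where you sketch the construction by lifting and padding spanning subsets of $H_p$, the paper writes down explicit generating vectors for $p\geq 5$ (using the quadruples $(a,b,c,d)$ of Lemma~\ref{lem.sol.p} to secure disjointness of the full cyclic subgroups, not just of their socle-lines) and defers $p=2,3$ to separate lemmas on elementary abelian groups.
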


\begin{proof}
Let $(x_1,\dots,x_{r_1};y_1,\dots,y_{r_2})$ be an unmixed
ramification structure of size $(r_1,r_2)$. Set
\[
    \Sigma_1:=\Sigma(x_1,\dots,x_{r_1}):=\{i_1
    x_1,\dots,i_{r_1}x_{r_1}: i_1,\dots i_{r_1} \in \mathbb{Z}\},
\]
and
\[
    \Sigma_2:=\Sigma(y_1,\dots,y_{r_2}):=\{j_1
    y_1,\dots,j_{r_2}y_{r_2}: j_1,\dots j_{r_2} \in \mathbb{Z}\},
\]
and recall that $\Sigma_1 \cap \Sigma_2 = \{0\}$.

Consider the primary decomposition of $G$,
\[
G = \bigoplus_{p\in \{\text{Primes}\}} G_p,
\]
and observe that since $G$ is generated by $\min\{r_1,r_2\}-1$
elements, so is any $G_p$ (which is a characteristic subgroup of
$G$).

Therefore, $G_p$ can be written as
\[
    G_p \cong \mathbb{Z}/p^{k_1}\mathbb{Z} \times \dots \times \mathbb{Z}/p^{k_{t-1}}\mathbb{Z}
    \times \mathbb{Z}/p^{k_t}\mathbb{Z},
\]
where $k_1 \leq \dots \leq k_{t-1} \leq k_t$ and $1\leq t \leq
\min\{r_1,r_2\}-1$.

Denote $H_p := p^{k_t-1}G_p$, and observe that $H_p$ is an
elementary abelian group of rank at most $t$.

{\bf Step 1.} Let $x_1=(x_{1,p}) \in \bigoplus_{p\in
\{\text{Primes}\}} G_p$ and let
\[
\Sigma_{1,p}:=\Sigma(x_{1,p},\dots,x_{r_1,p}):= \{l_1
    x_{1,p},\dots,l_{r_1}x_{r_1,p}: l_1,\dots l_{r_1} \in \mathbb{Z}\},
\]
be the set of multiples of $(x_{1,p},\dots,x_{r_1,p})$, then by the
Chinese Remainder Theorem, $x_{1,p}$ is a multiple of $x_1$, and
hence $\Sigma_1 \supseteq \Sigma_{1,p}$.

{\bf Step 2.} $G_p$ is not cyclic.

Otherwise, if $G_p \cong \mathbb{Z}/p^k\mathbb{Z}$, then $H_p =
p^{k-1}G_p \cong \mathbb{Z}/p\mathbb{Z}$. Since $\Sigma_{1,p}$
contains a generator of $G_p$, it also contains a non-trivial
element of $H_p$ and so $\Sigma_{1,p} \supseteq H_p$. Thus $\Sigma_1
\supseteq H_p$, and similarly $\Sigma_2 \supseteq H_p$, a
contradiction to $\Sigma_1 \cap \Sigma_2 = \{0\}$.

{\bf Step 3.} $k_t = k_{t-1}$, namely $G_p \cong
\mathbb{Z}/p^{k_1}\mathbb{Z} \times \dots \times
\mathbb{Z}/p^{k_{t-1}}\mathbb{Z} \times
\mathbb{Z}/p^{k_{t-1}}\mathbb{Z}$, where $k_1 \leq \dots \leq
k_{t-1}$ and $2\leq t \leq \min\{r_1,r_2\}-1$.

Otherwise, if $k_t \neq k_{t-1}$, then $H_p = p^{k_t-1}G_p \cong
\mathbb{Z}/p\mathbb{Z}$. As in Step 2, $\Sigma_{1,p}$ contains a
generator of $G_p$, and so it also contains a non-trivial element of
$H_p$. Thus $\Sigma_{1,p} \supseteq H_p$, and similarly
$\Sigma_{2,p} \supseteq H_p$, a contradiction to $\Sigma_1 \cap
\Sigma_2 = \{0\}$.

{\bf Step 4.} $p=2$ or $3$.

The extra conditions for $p=2$ and $3$ are due to dimensional
reasons.

\begin{itemize}
\item Let $p=2$ and assume that $k_{t-1}>k_{t-2}$.
In this case, $H_2 \cong (\mathbb{Z}/2\mathbb{Z})^2$ contains only
three non-trivial vectors. However, $|H_2 \cap \Sigma_{1,2}| \geq 2$
and $|H_2 \cap \Sigma_{2,2}| \geq 2$, a contradiction to $\Sigma_1
\cap \Sigma_2 = \{0\}$.

\item Let $p=2$ and assume that $k_{t-1}=k_{t-2}>k_{t-3}$. In this case,
$H_2 \cong (\mathbb{Z}/2\mathbb{Z})^3$ contains only seven
non-trivial vectors.

If $r_1=4$ then $\Sigma_{1,2}$ contains four different vectors
which generate $H_2$, whose sum is zero, say
$\{e_1,e_2,e_3,e_1+e_2+e_3\}$. Now, the other three vectors in $H_2$
are necessarily $\{e_1+e_2,e_1+e_3,e_2+e_3\}$, which are linearly
dependent, and so cannot generate $H_2 \cong
(\mathbb{Z}/2\mathbb{Z})^3$.

When $r_1$ is odd, $\Sigma_{1,2}$ contains four different vectors
from $H_2$. Indeed, a sum $x_1+\dots+x_{r_1}$ of some vectors
$v,u,w$ over $\mathbb{Z}/2\mathbb{Z}$ (i.e. $x_i \in \{v,u,w\}$),
where $r_1$ is odd, cannot be equal to $0$, unless $v$, $u$ and $w$
are linearly dependent,  and so cannot generate $H_2 \cong
(\mathbb{Z}/2\mathbb{Z})^3$.
Thus, if $r_1$ is odd, then $|H_2 \cap
\Sigma_{1,2}| \geq 4$, and similarly, if $r_2$ is odd, then $|H_2
\cap \Sigma_{2,2}| \geq 4$, a contradiction to $\Sigma_1 \cap
\Sigma_2 = \{0\}$.

\item Let $p=3$ and assume that $k_{t-1}>k_{t-2}$.
In this case, $H_3 \cong (\mathbb{Z}/3\mathbb{Z})^2$ contains only
eight non-trivial vectors. If $r_1=3$ then $\Sigma_{1,3}$ contains
three different vectors, which generate $H_3$, whose sum is zero, say
$\{e_1,e_2,2e_1+2e_2\}$, as well as their multiples $\{2e_1,2e_2,e_1+e_2\}$.
Now, the other two vectors in $H_2$ are necessarily $\{e_1+2e_2,2e_1+e_2\}$,
which are linearly dependent, and so cannot generate $H_3 \cong
(\mathbb{Z}/3\mathbb{Z})^2$.
\end{itemize}

{\bf Step 5.} Now, let $p \geq 5$ and assume that $G_p =
\mathbb{Z}/p^{k_1}\mathbb{Z} \times \dots \times
\mathbb{Z}/p^{k_{t-1}}\mathbb{Z} \times
\mathbb{Z}/p^{k_{t-1}}\mathbb{Z}$, where $k_1 \leq \dots \leq
k_{t-1}$ and $2\leq t \leq \min\{r_1,r_2\}-1$. We will choose
appropriate vectors for $\Sigma_{1,p}$ and $\Sigma_{2,p}$.

Assume that $(a,b,c,d)$ satisfy the condition in
Equation~\eqref{eq.sol.Zp} below, and let
\begin{align*}
x_{1,p} &= (1,0,\dots,0,1,0) & y_{1,p} &= (1,0,\dots,0,a,b) \\
x_{2,p} &= (0,1,0,\dots,0,0,1) & y_{2,p} &= (0,1,0,\dots,0,c,d) \\
x_{3,p} &= (0,0,1,0,\dots,0,-1,0) & y_{3,p} &= (0,0,1,0,\dots,0,-a,-b) \\
x_{4,p} &= (0,0,0,1,0\dots,0,0,-1) & y_{4,p} &= (0,0,0,1,0,\dots,0,-c,-d) \\
&\vdots & &\vdots \\
x_{t-2,p} &= (0,\dots,0,1,*,*) & y_{t-2,p} &= (0,\dots,0,1,*,*) \\
x_{t-1,p} &= (0,\dots,0,0,*,*) & y_{t-1,p} &= (0,\dots,0,0,*,*) \\
x_{t,p} &= (0,\dots,0,0,*,*) & y_{t,p} &= (0,\dots,0,0,*,*) \\
&\vdots & &\vdots\\
x_{r_1,p} &= (-1,\dots,-1,-1,-1) & y_{r_2,p} &= (-1,\dots,-1,-a-c,-b-d) \\
\end{align*}
where the elements marked with $(*,*)$ in $x_{t-2,p}$ (and after)
are chosen from $\{(0,\pm 1),(\pm 1,0), \pm(1,1)\}$ such that
$(x_{1,p},x_{2,p},\dots,x_{t,p})$ are independent and the sum
$x_{1,p}+\dots+x_{r_1,p}=0$. Similarly, the elements marked with
$(*,*)$ in $y_{t-2,p}$ (and after) are chosen from
$\{\pm(a,b),\pm(c,d), \pm(a+c,b+d)\}$, such that
$(y_{1,p},y_{2,p},\dots,y_{t,p})$ are independent and
$y_{1,p}+\dots+y_{r_1,p}=0$.

Since $\langle x_{1,p},\dots,x_{r_1,p} \rangle = G_p = \langle
y_{1,p},\dots,y_{r_2,p} \rangle$, we deduce that
$(x_{1,p},\dots,x_{r_1,p})$ form a spherical $r_1-$system of
generators for $G_p$ and that $(y_{1,p},\dots,y_{r_2,p})$ form a
spherical $r_2-$system of generators for $G_p$. Moreover, for every
$1 \leq i \leq r_1$, $1 \leq j \leq r_2$, and $k,l \in \mathbb{Z}$,
if the vectors $kx_{i,p}$ and $ly_{j,p}$ are not trivial, then they
are linearity independent. Hence, $\Sigma_{1,p} \cap \Sigma_{2,p} =
\{0\}$, as needed.

When $p=2$ or $3$ it suffices to construct unmixed ramification
structures for the elementary abelian groups in characteristic $2$
and $3$. These yield an unmixed ramification structure for any
choice of $H_2$ (resp. $H_3$), which induces an appropriate
structure for any $G_2$ (resp. $G_3$), by completing the generating
vectors of $H_2$ (resp. $H_3$) to generating vectors of $G_2$ (resp.
$G_3$), essentially in the same way of $p \geq 5$. These
constructions are described in the following
Lemmas~\ref{lem.unmixed.2} and~\ref{lem.unmixed.3}.

Now, recall that by using the primary decomposition of $G$, it was
enough to check the conditions on each primary component $G_p$, thus
$G$ admits an unmixed ramification structure of size $(r_1,r_2)$ as
needed.
\end{proof}

\begin{lem}\label{lem.unmixed.2}
Let $G=(\mathbb{Z}/2\mathbb{Z})^t$.

If $t \geq 4$ then $G$ always admits an unmixed ramification
structure of size $(r_1,r_2)$, for any $r_1,r_2 \geq t+1$.

If $t=3$ then $G$ admits an unmixed ramification structure of size
$(r_1,r_2)$, if and only if $r_1,r_2 \geq 5$ and $r_1$, $r_2$ are
not both odd.
\end{lem}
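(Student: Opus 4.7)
The statement is a biconditional when $t=3$, so my plan is to prove sufficiency (in both cases) and necessity (only in the $t=3$ case). I would first use the two simplifications that come from $H := (\mathbb{Z}/2\mathbb{Z})^t$ having exponent $2$: the condition $\Sigma(T_1)\cap\Sigma(T_2)=\{0\}$ reduces to asking that the supports of $T_1$ and $T_2$ in $H\setminus\{0\}$ be disjoint, and the condition $\sum x_i=0$ is equivalent to every vector appearing an even number of times in the tuple. The necessity for $t=3$ then follows from two counting arguments in the set of $7$ non-trivial elements: first, a spherical $r$-system with $r$ odd must have support of size $\geq 4$, since if the support were a basis $\{u,v,w\}$ of $H$ then the total sum $au+bv+cw$ with $a+b+c$ odd would be non-zero, so two odd lengths would together require $\geq 8$ disjoint support vectors, impossible; second, for $r_1=4$ the support of $T_1$ is forced to be a basis together with its total sum, leaving exactly the three even-weight vectors of $H$, which are linearly dependent and therefore cannot generate $H$ for $T_2$.

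For sufficiency when $t=3$, I would treat the two allowed parity cases. If $r_1$ is odd and $r_2$ is even, take $T_1=(e_1,e_2,e_1+e_2,e_3,e_3)$, whose support $\{e_1,e_2,e_3,e_1+e_2\}$ generates $H$; crucially, the complement $\{e_1+e_3,e_2+e_3,e_1+e_2+e_3\}$ also generates $H$ thanks to the odd-weight triple, so $T_2$ can repeat each of these three with even multiplicities summing to $r_2\geq 6$, while $T_1$ is padded by pairs $(e_1,e_1)$ to reach any odd $r_1\geq 5$. If both $r_1,r_2$ are even, take $T_1=(e_1,e_1,e_2,e_2,e_3,e_3)$, leaving all four non-basis non-zero vectors available for $T_2$, which then uses the weight-$3$ vector $e_1+e_2+e_3$ together with weight-$2$ vectors to generate $H$.

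For sufficiency when $t\geq 4$, I would construct two base structures of different parity and then pad. The $(t+1)$-base for $T_1$ is $(e_1,\ldots,e_t,e_1+\cdots+e_t)$; the $(t+2)$-base is $(e_1,\ldots,e_t,e_1+e_2,e_3+\cdots+e_t)$, where the two added vectors are distinct and non-basic precisely because $t\geq 4$. For $T_2$ one picks a basis of $H$ inside the complement of the $T_1$-support, using weight-$2$ pairs together with a weight-$3$ vector in order to jump from the even-weight subgroup into all of $H$; since the complement contains at least $2^t-t-3\geq 9$ elements, such a disjoint choice is always available, with explicit formulas adapted to each of the four parity combinations of $(r_1,r_2)$. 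Pairs of repeated support elements then pad each $T_i$ to any $r_i\geq t+1$. The main difficulty, namely orchestrating the supports of $T_1$ and $T_2$ so as to be simultaneously disjoint and generating across all parity combinations, is handled by the abundance of vectors in the complement when $t\geq 4$, whereas for $t=3$ the tightness of the count $7$ is precisely what forces the parity restriction.
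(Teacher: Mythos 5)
The paper does not actually prove this lemma --- after stating it, the authors only remark that ``the proofs of both lemmas are straight forward calculation'' --- so there is no argument of theirs to compare yours against; I can only assess your proof on its own terms. Your reduction (since $G$ is abelian of exponent $2$, disjointness of $(T_1,T_2)$ is exactly disjointness of their supports in $G\setminus\{0\}$) and your treatment of $t=3$ are correct and complete: the parity obstruction for odd $r$ via a $3$-element support necessarily being a basis, the exclusion of $r=4$ via the forced support $\{u,v,w,u+v+w\}$ whose complement $\{u+v,u+w,v+w\}$ is $2$-dimensional, and the explicit constructions in the (odd, even) and (even, even) cases all check out. One misstatement in your preamble: $\sum x_i=0$ is \emph{not} equivalent to every vector appearing an even number of times --- your own $T_1=(e_1,e_2,e_1+e_2,e_3,e_3)$ is a counterexample. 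Only the ``if'' direction holds in general; the ``only if'' direction holds precisely when the support is linearly independent, which happens to be the only situation in which you invoke it, so nothing downstream breaks, but you should restate it.

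The genuine soft spot is sufficiency for $t\geq 4$. The two-bases-of-consecutive-length-plus-padding scheme and your explicit $T_1$'s of lengths $t+1$ and $t+2$ are fine, but for $T_2$ you offer only the count $2^t-t-3\geq 9$ for the complement of the $T_1$-support and the assertion that ``such a disjoint choice is always available.'' Cardinality of the complement does not by itself yield a generating set inside it that supports zero-sum tuples of both lengths $t+1$ and $t+2$; the tuples must be exhibited, and this is not entirely free of bookkeeping. For instance, the natural choice of $T_2$-support $e_1+e_3,\dots,e_1+e_t,\ e_2+e_3,\ e_1+e_2+e_3$, closed up by appending its total sum, appends $(t-1)e_1+e_3+\cdots+e_t$, which for odd $t$ equals $e_3+\cdots+e_t$ and collides with the support of your $(t+2)$-base for $T_1$. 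So the promised ``explicit formulas adapted to each of the four parity combinations'' genuinely need to be written down (and will also depend on the parity of $t$). Such formulas do exist --- there is no obstruction, since the complement contains ample even-weight and odd-weight vectors --- so once you supply them the proof is complete.
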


\begin{lem}\label{lem.unmixed.3}
Let $G=(\mathbb{Z}/3\mathbb{Z})^t$.

If $t \geq 3$ then $G$ always admits an unmixed ramification
structure of size $(r_1,r_2)$, for any $r_1,r_2 \geq t+1$.

If $t=2$ then $G$ admits an unmixed ramification structure of size
$(r_1,r_2)$, if and only if $r_1,r_2 \geq 4$.
\end{lem}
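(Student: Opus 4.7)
The proof plan is to reformulate the problem in linear-algebraic terms over $\mathbb{F}_3$ and then exhibit the required ramification structures explicitly. In $G = \mathbb{F}_3^t$ conjugation is trivial and every non-zero element has order $3$, so for any spherical system $T = (x_1, \dots, x_r)$ the set $\Sigma(T)$ equals $\{0\} \cup \bigcup_{i} \langle x_i \rangle$. Thus the disjointness condition $\Sigma(T_1) \cap \Sigma(T_2) = \{0\}$ says exactly that the sets of one-dimensional subspaces (``lines'') used by the two systems are disjoint, while the generation condition says that each of these line-sets spans $G$.

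For $t = 2$, the space $\mathbb{F}_3^2$ has exactly four lines. The necessity of $r_i \geq 4$ is immediate: if $r_i = 3$ then the three entries of $T_i$ must lie on three distinct lines (otherwise $\langle T_i \rangle$ is a proper cyclic subgroup), so $\Sigma(T_i)$ exhausts three of the four lines and only a single line is available for $\Sigma(T_{3-i})$; but a single line is a cyclic subgroup of order $3$ and cannot contain a generating set for $G$. For sufficiency with $r_1 = r_2 = 4$ I take
\[
T_1 = (e_1,\, 2e_1,\, e_2,\, 2e_2), \qquad T_2 = (e_1+e_2,\, 2e_1+2e_2,\, e_1+2e_2,\, 2e_1+e_2),
\]
each using two lines that together exhaust the four lines of $\mathbb{F}_3^2$, and then extend to larger $r_i$ by padding with pairs $(v,-v)$ and triples $(v,v,v)$, each of which has zero sum and lies on a line already used by $T_i$.

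For $t \geq 3$ I take the baseline $T_1 = (e_1, \dots, e_t, -(e_1+\cdots+e_t))$, using the $t+1$ lines $\langle e_1 \rangle, \dots, \langle e_t \rangle, \langle e_1+\cdots+e_t \rangle$. For $T_2$ of size $t+1$ I pair coordinates and set $y_{2j-1} = e_{2j-1}+e_{2j},\ y_{2j} = e_{2j-1}-e_{2j}$; the identities $y_{2j-1} \pm y_{2j} = \pm e_{2j-1} \mp e_{2j}$ recover each paired basis vector inside $\langle T_2 \rangle$. When $t$ is even this covers all coordinates, and the zero-sum condition forces a closing vector $e_1+e_3+\cdots+e_{t-1}$, which has support of size $t/2 \geq 2$ and so is neither an axis nor the full diagonal. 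When $t$ is odd, the leftover coordinate $e_t$ is absorbed by inserting a mixing vector such as $e_{t-1}+e_t$; for the small case $t=3$ the explicit choice $(e_1+e_2,\, e_1-e_2,\, e_2+e_3,\, -e_2-e_3)$ works. Extension to $r_i > t+1$ proceeds by padding exactly as in $t=2$.

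The main technical point is the bookkeeping for odd $t \geq 3$ in the construction of $T_2$: one needs the closing vector(s) to lie on new lines that are neither coordinate axes nor the diagonal $\langle e_1+\cdots+e_t \rangle$, while still generating $G$ together with the paired vectors. This is handled by ensuring the closing vector has support of size strictly between $1$ and $t$, which rules out both axes and the full diagonal automatically.
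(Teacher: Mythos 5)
Your reduction of the problem to lines of $\mathbb{F}_3^t$ is the right framework (the paper in fact gives no proof of this lemma, dismissing it as a ``straight forward calculation''), and both the necessity argument for $t=2$, $r_i=3$ and the even-$t$ construction check out. However, two of your explicit steps fail as written. First, the tuple you offer for $t=3$, namely $(e_1+e_2,\ e_1-e_2,\ e_2+e_3,\ -e_2-e_3)$, has sum $2e_1\neq 0$, so it is not a spherical system of generators at all. A correct choice is, e.g., $(e_1+e_2,\ e_1-e_2,\ e_2+e_3,\ e_1+2e_2+2e_3)$: the sum is zero, the first three entries generate, and no entry lies on an axis or on the diagonal $\pm(e_1+e_2+e_3)$. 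Note that here the closing vector has full support, so your sufficient criterion ``support of size strictly between $1$ and $t$'' does not apply and one must check directly that it is not $\pm(1,1,\dots,1)$; this is precisely the situation your general odd-$t$ recipe leaves unexamined, since with the mixing vector $e_{t-1}+e_t$ the forced closing vector is $e_1+e_3+\cdots+e_{t-2}+2e_{t-1}+2e_t$, whose support has size $(t-1)/2+2$, and this equals $t$ exactly when $t=3$.

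Second, your padding step has an arithmetic gap: appending pairs $(v,-v)$ and triples $(v,v,v)$ to a base system of size $n_0$ realizes only the sizes $n_0+2a+3b$ with $a,b\geq 0$, which misses $n_0+1$. So as stated you do not obtain $r_i=5$ when $t=2$, nor $r_i=t+2$ in general. This is easily repaired: since $-2v=v$ in $(\mathbb{Z}/3\mathbb{Z})^t$, replacing a single entry $v$ by the pair $(-v,-v)$ preserves the sum, the generation property and the set of lines used, while increasing the length by exactly one; combined with your pairs and triples this reaches every $r_i\geq n_0$. With these two repairs the argument is complete and proves the lemma.
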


The proofs of both lemmas are straight forward calculation.

\begin{lem}\label{lem.sol.p}
Let $p \geq 5$ be a prime number and
$U:=(\mathbb{Z}/p\mathbb{Z})^*$, the number $N$ of quadruples
$(a,b,c,d) \in U$  such that:
\begin{equation}\label{eq.sol.Zp}
    a-b, a+c, c-d, b+d, a+c-b-d, ad-bc \in U
\end{equation}
is $N=(p-1)(p-2)(p-3)(p-4)$.
\end{lem}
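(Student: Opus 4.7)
Plan: I would prove this by conditioning on $(a,b)$, using that all six quantities in~\eqref{eq.sol.Zp} are nonzero. First, the conditions force $a, b \in U$ with $a \ne b$ (from $a - b \ne 0$), giving $(p-1)(p-2)$ choices for the pair $(a,b)$. The remaining task is to show that for every such pair, the number of admissible $(c,d) \in U^2$ is exactly $(p-3)(p-4)$.

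For fixed $(a,b)$, I would enumerate $c$ over $U \setminus \{-a\}$ (which has $p - 2$ elements) and then count admissible $d$. Rewriting $a + c - b - d \ne 0$ as $d \ne a - b + c$ and $ad - bc \ne 0$ as $d \ne bc/a$, the conditions on $d \in U$ amount to avoiding the four values $-b,\; c,\; a-b+c,\; bc/a$. A routine pairwise check, using the already-imposed constraints $a,b \in U$, $a \ne b$, $c \ne -a$, shows that among these four values the only compatible coincidence is $-b = c$ (iff $c=-b$), and the only value that can escape $U$ is $a-b+c$ (equal to $0$ iff $c = b - a$); all other equalities force one of $a=b$, $b=0$, or $c = -a$, each of which is excluded. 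Moreover, $-b = b - a$ iff $a = 2b$.

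This leaves two regimes. If $a \ne 2b$, the two distinguished choices $c = -b$ and $c = b - a$ are distinct; each of them leaves three distinct forbidden values in $U$, hence $p - 4$ choices for $d$. The remaining $p - 4$ "generic" values of $c$ each give four distinct forbidden values in $U$, hence $p - 5$ choices for $d$. The total is
\[
2(p-4) + (p-4)(p-5) \;=\; (p-4)(p-3).
\]
If $a = 2b$, the two distinguished values of $c$ coincide into a single one, which now yields only two distinct forbidden values for $d$ (hence $p-3$ choices), while each of the $p - 3$ remaining values of $c$ still gives $p - 5$ choices for $d$. The total is again
\[
(p-3) + (p-3)(p-5) \;=\; (p-3)(p-4).
\]
Multiplying by $(p-1)(p-2)$ then gives $N = (p-1)(p-2)(p-3)(p-4)$. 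The only nuisance — more bookkeeping than idea — is verifying that all excluded pairwise coincidences among $\{-b,\; c,\; a-b+c,\; bc/a\}$ genuinely reduce to one of $a=b$, $b=0$, or $c=-a$; once this is carried out, the two-case split is mechanical and the hypothesis $p \geq 5$ is exactly what makes $p - 4 \geq 1$ so that the generic regime is non-empty.
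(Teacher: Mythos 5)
Your proof is correct: I checked the pairwise coincidence analysis among the forbidden values $\{0,\,-b,\,c,\,a-b+c,\,bc/a\}$ (every coincidence other than $-b=c$ and $a-b+c=0$ indeed forces $a=b$, $b=0$, or $c=-a$), and both totals $2(p-4)+(p-4)(p-5)$ and $(p-3)+(p-3)(p-5)$ equal $(p-3)(p-4)$, giving $N=(p-1)(p-2)(p-3)(p-4)$. The paper's proof is the same in spirit (elementary conditioning on $(a,b)$ first) but is organized differently in both steps: it uses the scaling symmetry $(a,b,c,d)\mapsto(\lambda a,\lambda b,\lambda c,\lambda d)$, $\lambda\in U$, which preserves all six conditions, to normalize $a=1$ and extract the factor $p-1$; then, for each of the $p-2$ admissible $b$, it counts the pairs $(c,d)$ with $c\neq 0,-1$ and $d\neq 0,-b$ and subtracts the three exceptional sets $\{c=d\}$, $\{d=1-b+c\}$, $\{d=bc\}$, which are pairwise disjoint on that range. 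The joint subtraction buys freedom from your case split on $a=2b$ (equivalently $b=1/2$ after normalization), at the price of having to verify that the three bad sets are disjoint; your pointwise count over $c$ trades that for the bookkeeping of which forbidden values of $d$ collide. One detail to make explicit when you write this up: the two distinguished values $c=-b$ and $c=b-a$ do lie in the enumerated range $U\setminus\{-a\}$ (since $a\neq b$ and $b\neq 0$), which is needed for the counts $p-4$, respectively $p-3$, of the remaining ``generic'' values of $c$ to be correct.
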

\begin{proof}
The number $N$ equals $p-1$ times the number of solutions that we
get for $a=1$. Now, $b \neq 0,1$, so there are $p-2$ possibilities
for $b$. The conditions $c \neq 0, -1$ and $d \neq 0, -b$ imply
$(p-2)^2$ possibilities for the pair $(c,d)$. From this number we
need to subtract the number of solutions for $c=d$, $d=1-b+c$ and
$d=bc$, which are $p-2$, $p-2$ and $p-4$ respectively. We deduce
that there are $(p-2)^2 - [(p-2)+(p-2)+(p-4)]=(p-3)(p-4)$
possibilities for the pair $(c,d)$. Hence $N=(p-1)(p-2)(p-3)(p-4)$.
\end{proof}

We remark that this Lemma corrects the calculation given
in~\cite[Theorem 3.4]{BCG05}.

\subsubsection{Hurwitz components in case $(\mathbb{Z}/n\mathbb{Z})^2$}
Observe that for $G=(\mathbb{Z}/n\mathbb{Z})^2$ there is only one
type of a spherical $3-$system of generators, which is
$\tau=(n,n,n)$. Also note that $\Aut(G) \cong \GL(2,n)$.

The following Lemmas give a more precise estimation of the number
of Hurwitz components in case $G=(\mathbb{Z}/n\mathbb{Z})^2$,
which generalizes Remark 3.5 in~\cite{BCG05}.

\begin{lem}\label{lem.ZnZ.p}
Let $p\geq 5$ be a prime. The number $h=h(G;\tau,\tau)$, where
$\tau=(p,p,p)$, of Hurwitz components for
$G=(\mathbb{Z}/p\mathbb{Z})^2$ satisfies
\[
N_p/72 \leq h \leq N_p/6,
\]
where $N_p=(p-1)(p-2)(p-3)(p-4)$.
\end{lem}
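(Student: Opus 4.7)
The plan is to parametrize $\mathcal{U}(G;\tau,\tau)/\Aut(G)$ by the $4$-tuples of Lemma~\ref{lem.sol.p} and then count orbits of the residual finite group action on the resulting $N_p$-element set.

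Since $\Aut(G)\cong\GL(2,p)$ acts sharply transitively on ordered bases of $G=(\ZZ/p\ZZ)^2$, we may use it to bring the first component $T_1$ of any pair in $\mathcal{U}(G;\tau,\tau)$ to the canonical form
\[
T_1^0 = \bigl((1,0),(0,1),(-1,-1)\bigr),
\]
and this fixes the $\Aut(G)$-representative uniquely. Writing $T_2=((a,b),(c,d),(-a-c,-b-d))$, the requirement that $T_2$ generates $G$ together with the disjointness condition $\Sigma(T_1^0)\cap\Sigma(T_2)=\{0\}$ amounts to demanding that the six lines through the origin of $G$ spanned by the entries of $T_1^0$ and of $T_2$ be pairwise distinct; a direct case analysis shows this is equivalent to $(a,b,c,d)\in U^4$ satisfying~\eqref{eq.sol.Zp}. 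By Lemma~\ref{lem.sol.p} we therefore obtain a bijection between $\mathcal{U}(G;\tau,\tau)/\Aut(G)$ and a set of cardinality $N_p$.

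Since $G$ is abelian, every braid generator acts by a transposition on an ordered spherical $3$-system, so the residual action on this $N_p$-element set is that of $W:=\mathbf{B}_3\wr\ZZ/2\ZZ$, which factors through $\mathfrak{S}_3\wr\ZZ/2\ZZ$, a group of order $72$. The lower bound $h\geq N_p/72$ is then immediate from the fact that every $W$-orbit has cardinality at most $|W|=72$. For the upper bound, I would exhibit a subgroup of $W$ whose action on the $N_p$ tuples is free: the copy $\{1\}\times\mathfrak{S}_3\subset\mathfrak{S}_3\times\mathfrak{S}_3\subset W$, which acts on $(a,b,c,d)$ by permuting the three entries of $T_2$. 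Using~\eqref{eq.sol.Zp} (and in particular $ad-bc\ne 0$) one verifies that the three vectors $(a,b)$, $(c,d)$, $(-a-c,-b-d)$ are pairwise distinct, so this $\mathfrak{S}_3$-action is free. Hence every $W$-orbit decomposes into $\mathfrak{S}_3$-orbits of size six, its cardinality is a multiple of $6$, and we conclude $h\leq N_p/6$.

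The main technical point requiring care is the first step, namely matching the combinatorial disjointness of six lines in $(\ZZ/p\ZZ)^2$ with the six algebraic inequalities of~\eqref{eq.sol.Zp}; once this parametrization is in place, the two bounds on $h$ are direct consequences of the elementary orbit-counting facts collected in Lemma~\ref{lem.G.action}.
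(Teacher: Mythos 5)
Your argument is correct and follows essentially the same route as the paper: normalize $T_1$ to the standard basis via the sharply transitive $\Aut(G)\cong\GL(2,p)$ action, identify the $\Aut(G)$-orbits of $\mathcal{U}(G;\tau,\tau)$ with the $N_p$ quadruples of Lemma~\ref{lem.sol.p}, and then bound the orbit sizes of the residual $\mathfrak{S}_3\wr\ZZ/2\ZZ$-action between $6$ and $72$. Your justification of the upper bound via the free action of the copy of $\mathfrak{S}_3$ permuting the (pairwise distinct) entries of $T_2$ is a slightly more explicit phrasing of the paper's observation that this action has orbits of length exactly $6$, but the substance is identical.
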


\begin{proof}
Let $(x_1,x_2;y_1,y_2)$ be an unmixed Beauville structure for $G$.
Since $x_1, x_2$ are generators of $G$, they are a basis, and
without loss of generality $x_1, x_2$ are the standard basis
$x_1=(1,0)$, $x_2=(0,1)$. Now, let $y_1=(a,b)$, $y_2=(c,d)$, then
the condition $\Sigma_1 \cap \Sigma_2 = \{0\}$ means that any pair
of the six vectors yield a basis of $G$, implying that $a,b,c,d$
must satisfy the conditions given in Equation~\eqref{eq.sol.Zp}.

Moreover, the $N_p$ pairs $\bigl((1,0),(0,1);(a,b),(c,d)\bigr)$,
where $a,b,c,d$ satisfy~\eqref{eq.sol.Zp}, are exactly the
representatives for the $\Aut(G)-$orbits in the set
$\mathcal{U}(G;\tau,\tau)$.

Now, one should consider the action of $B_3 \times B_3$ on
$\mathcal{U}(G;\tau,\tau)$, which is equivalent to the action of
$\mathfrak{S}_3 \times \mathfrak{S}_3$, since $G$ is abelian. The action of $\mathfrak{S}_3$ on the
second component is obvious (there are $6$ permutations), and the
action of $\mathfrak{S}_3$ on the first component can be translated to an
equivalent $\Aut(G)-$action, given by multiplication in one of the
six matrices:
\[
\left(%
\begin{array}{cc}
  1 & 0 \\
  0 & 1 \\
\end{array}%
\right),
\left(%
\begin{array}{cc}
  0 & 1 \\
  1 & 0 \\
\end{array}%
\right),
\left(%
\begin{array}{cc}
  -1 & 0 \\
  -1 & 1 \\
\end{array}%
\right),
\left(%
\begin{array}{cc}
  1 & -1 \\
  0 & -1 \\
\end{array}%
\right),
\left(%
\begin{array}{cc}
  -1 & 1 \\
  -1 & 0 \\
\end{array}%
\right),
\left(%
\begin{array}{cc}
  0 & -1 \\
  1 & -1 \\
\end{array}%
\right),
\]
yielding an equivalent representative.

Therefore, the action of $\mathfrak{S}_3$ on the second component yields
orbits of length $6$, and the action of $\mathfrak{S}_3$ on the first
component connects them together, and gives orbits of sizes from
$6$ to $36$. Moreover since one can exchange the vector $(x_1,x_2)$ with the vector $(y_1,y_2)$ we get the desired result.
\end{proof}


\begin{cor}\label{cor.ZnZ.pe}
Let $p\geq 5$ be a prime. The number $h=h(G;\tau,\tau)$, where
$\tau=(p^k,p^k,p^k)$, of Hurwitz components for
$G=(\mathbb{Z}/p^k\mathbb{Z})^2$ satisfies
\[
N_{p^k}/72 \leq h \leq N_{p^k}/6,
\]
where $N_{p^k} = p^{4k-4}(p-1)(p-2)(p-3)(p-4)$.
\end{cor}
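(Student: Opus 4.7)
The plan is to follow the proof of Lemma~\ref{lem.ZnZ.p} step by step, translating every condition from $\mathbb{Z}/p^k\mathbb{Z}$ down to $\mathbb{Z}/p\mathbb{Z}$. The key lemma I would first establish is that for $G=(\mathbb{Z}/p^k\mathbb{Z})^2$, an element $v=(a,b)\in G$ has order $p^k$ iff $(a,b)\not\equiv (0,0)\pmod p$, and that for two such elements $u,v$ one has $\langle u\rangle\cap\langle v\rangle=\{0\}$ iff the reductions $\bar u,\bar v$ in $G/pG\cong(\mathbb{Z}/p\mathbb{Z})^2$ are linearly independent. For the nontrivial direction: the cyclic group $\langle u\rangle$ contains a unique order-$p$ subgroup, namely $\langle p^{k-1}u\rangle$, and similarly for $v$; any nonzero element in $\langle u\rangle\cap\langle v\rangle$ forces the equality $\langle p^{k-1}u\rangle=\langle p^{k-1}v\rangle$, which via the canonical isomorphism $p^{k-1}G\cong G/pG$ is exactly the linear dependence of $\bar u$ and $\bar v$.

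Given an unmixed Beauville structure $(x_1,x_2,x_3;y_1,y_2,y_3)$ for $G$ of type $\tau=(p^k,p^k,p^k)$, the pair $\{x_1,x_2\}$ generates $G$ (because $x_3=-(x_1+x_2)$) and therefore forms a basis. Using transitivity of $\Aut(G)=\GL(2,\mathbb{Z}/p^k\mathbb{Z})$ on bases, I would normalize $x_1=(1,0)$, $x_2=(0,1)$, $x_3=-(1,1)$. Writing $y_1=(a,b)$, $y_2=(c,d)$, $y_3=-(a+c,b+d)$, the conditions that $y_1,y_2,y_3$ all have order $p^k$, that $y_1,y_2$ generate $G$, and that $\Sigma_1\cap\Sigma_2=\{0\}$ (which amounts to pairwise linear independence mod $p$ of the six vectors $x_1,x_2,x_3,y_1,y_2,y_3$) translate, via the lemma above, into exactly the conditions of Equation~\eqref{eq.sol.Zp} imposed on the reductions $\bar a,\bar b,\bar c,\bar d\pmod p$.

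Since each mod-$p$ solution lifts to $p^{4(k-1)}$ quadruples in $(\mathbb{Z}/p^k\mathbb{Z})^4$, Lemma~\ref{lem.sol.p} produces $p^{4(k-1)}(p-1)(p-2)(p-3)(p-4)=N_{p^k}$ representatives of the $\Aut(G)$-orbits in $\mathcal{U}(G;\tau,\tau)$. The remaining analysis is formally identical to the proof of Lemma~\ref{lem.ZnZ.p}: the action of $\mathbf{B}_3\times\mathbf{B}_3$ reduces to $\mathfrak{S}_3\times\mathfrak{S}_3$ because $G$ is abelian, the six explicit matrices listed there still realize the first $\mathfrak{S}_3$ factor modulo the normalization, and together with the $\mathbb{Z}/2\mathbb{Z}$ swap this produces orbits of size between $6$ and $72$. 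This yields the bounds $N_{p^k}/72\leq h\leq N_{p^k}/6$.

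I expect the main obstacle to be the disjointness characterization of the first paragraph, since this is the only step that genuinely uses $k>1$; once it is in place, the matching of the mod-$p$ conditions with Equation~\eqref{eq.sol.Zp} is a routine check, and the braid/swap bookkeeping transfers verbatim from the $k=1$ case.
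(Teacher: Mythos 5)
Your proposal is correct and follows essentially the same route as the paper, which simply asserts that the number of $\Aut(G)$-orbits in $\mathcal{U}(G;\tau,\tau)$ equals $p^{4k-4}N_p$ and then defers to the proof of Lemma~\ref{lem.ZnZ.p}. Your reduction-mod-$p$ characterization of the order and disjointness conditions (via the unique order-$p$ subgroup $\langle p^{k-1}u\rangle$ and the isomorphism $p^{k-1}G\cong G/pG$) is precisely the justification the paper leaves implicit for that lifting count, and the rest of your bookkeeping matches the paper's.
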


\begin{proof}
In this case, the number $N_{p^k}$ of $\Aut(G)-$orbits in the set
$\mathcal{U}(G;\tau,\tau)$ is exactly $p^{4k-4}$ times $N_p$, and
the proof is the same as in the previous Lemma~\ref{lem.ZnZ.p}.
\end{proof}

\begin{cor}\label{cor.ZnZ.n}
Let $n$ be an integer s.t. $(n,6)=1$. The number $h=h(G;\tau,\tau)$,
where $\tau=(n,n,n)$, of Hurwitz components for
$G=(\mathbb{Z}/n\mathbb{Z})^2$, where $n=p_1^{k_1}\cdot\ldots\cdot
p_t^{k_t}$, satisfies
\[
N_n/72 \leq h \leq N_n/6,
\]
where $N_n=\prod_{i=1}^t p_i^{4k_i-4}(p_i-1)(p_i-2)(p_i-3)(p_i-4)$.
\end{cor}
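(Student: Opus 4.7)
The plan is to reduce the problem to the prime-power case handled in Corollary~\ref{cor.ZnZ.pe} via the Chinese Remainder Theorem. Since $(n,6)=1$ and $n=p_1^{k_1}\cdots p_t^{k_t}$, we have a natural decomposition
\[
G = (\ZZ/n\ZZ)^2 \cong \prod_{i=1}^{t} (\ZZ/p_i^{k_i}\ZZ)^2 =: \prod_{i=1}^{t} G_i.
\]
Set $\tau_i = (p_i^{k_i},p_i^{k_i},p_i^{k_i})$. The first step is to establish a canonical bijection
\[
\mathcal{U}(G;\tau,\tau) \longleftrightarrow \prod_{i=1}^{t} \mathcal{U}(G_i;\tau_i,\tau_i).
\]
The generation, sphericity ($x_1+x_2+x_3=0$), and type conditions decompose componentwise under CRT: the order of each entry being $n$ forces every $i$-th projection to have order exactly $p_i^{k_i}$ and to generate $G_i$.

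Next, each $G_i$ is characteristic in $G$, so $\Aut(G) = \prod_i \Aut(G_i)$ acts componentwise on $\mathcal{U}(G;\tau,\tau)$. Thus the set $U$ of $\Aut(G)$-orbits is $\prod_i U_i$, where $U_i$ is the set of $\Aut(G_i)$-orbits on $\mathcal{U}(G_i;\tau_i,\tau_i)$. By Corollary~\ref{cor.ZnZ.pe}, $|U_i| = N_{p_i^{k_i}}$, hence $|U| = N_n$. The remaining group $K := \mathbf{B}_3 \wr \ZZ/2\ZZ$ acts \emph{diagonally} on $U$ across the CRT decomposition, and since $G$ is abelian this action factors through $\mathfrak{S}_3 \wr \ZZ/2\ZZ$, a group of order $72$. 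The quantity $h(G;\tau,\tau)$ is the number of $K$-orbits on $U$.

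The last step is to bound the sizes of these diagonal $K$-orbits. The trivial inequality orbit size $\leq|K|=72$ gives $h \geq N_n/72$. For the matching upper bound $h \leq N_n/6$, I would invoke the observation from the proof of Lemma~\ref{lem.ZnZ.p} (and Corollary~\ref{cor.ZnZ.pe}) that the $\mathfrak{S}_3$-subgroup of $K$ permuting the entries of the \emph{second} triple acts freely on each $U_i$ (with orbits of exactly size $6$). Its diagonal action on $\prod_i U_i$ then also has trivial stabilizers, so every $K$-orbit on $U$ has size at least $6$, yielding $h \leq N_n/6$.

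The main obstacle I foresee is the componentwise decomposition of the disjointness condition $\Sigma(T_1) \cap \Sigma(T_2)=\{0\}$. The implication ``componentwise disjoint $\Rightarrow$ globally disjoint'' is straightforward, but the converse is subtle, since the multiples $\{k\,x_{j,l} : k \in \ZZ\}$ in $G$ use a single scalar $k$ across all primary components, forming a proper sub-family of $\prod_i \langle x_{j,l,i}\rangle$. The argument I intend: if a nonzero $v = a\,x_{1,l_1,i_0} = b\,x_{2,l_2,i_0}$ exists in $\langle x_{1,l_1,i_0}\rangle \cap \langle x_{2,l_2,i_0}\rangle$ for some $i_0$, then by CRT one picks $k$ with $k\equiv a \pmod{p_{i_0}^{k_{i_0}}}$ and $k \equiv 0$ modulo every other $p_i^{k_i}$, and similarly $k'$ with $b$ in place of $a$; then $c := k\,x_{1,l_1} = k'\,x_{2,l_2}$ is a nonzero common element of $\Sigma(T_1)$ and $\Sigma(T_2)$, contradicting global disjointness. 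This lemma underpins the bijection used throughout.
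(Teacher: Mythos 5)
Your proposal is correct and follows essentially the same route as the paper: a Chinese Remainder Theorem reduction to the prime-power case of Corollary~\ref{cor.ZnZ.pe} for the count $N_n$ of $\Aut(G)$-orbits, followed by the orbit-size argument of Lemma~\ref{lem.ZnZ.p} giving $(\mathfrak{S}_3\wr\ZZ/2\ZZ)$-orbits of size between $6$ and $72$. You merely make explicit two points the paper leaves implicit --- that disjointness of $(T_1,T_2)$ decomposes componentwise under CRT, and that freeness of the $\mathfrak{S}_3$-action on the second triple persists for the diagonal action on the product --- both of which you handle correctly.
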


\begin{proof}
By the Chinese Remainder Theorem, the number $N_n$ of
$\Aut(G)-$orbits in the set $\mathcal{U}(G;\tau,\tau)$ can be
computed using Corollary~\ref{cor.ZnZ.pe}, and the proof is now
the same as in Lemma~\ref{lem.ZnZ.p}.
\end{proof}

\begin{rem} It clearly follows that
$N_n=O(n^4)$. In addition, in \cite{GJT} is given an explicit
formula for $N_n$.

Notice that if $n$ is divisible by the first $l$ primes $p_i\geq5$ then since:
\[ \lim_{l \rightarrow \infty} \prod_i(1 - \frac{1}{p_i})=0
\]
we have $N_n/n^4 \rightarrow 0$ as $l \rightarrow \infty$. 
\end{rem}

\subsubsection{Hurwitz components in case $G=(\ZZ/p\ZZ)^r$}
Fix an integer $r$, let $p>5$ be a prime number, and let
$G=(\ZZ/p\ZZ)^r$, then by Theorem~\ref{thm.unmixed.abelian}, $G$
admits an unmixed ramification structure of type $(\tau_1,\tau_2)$
where $\tau_1=\tau_2=\tau=(p,\dots,p)$ ($p$ appears $(r+1)-$times)
and $r_1=r_2=r+1$.

\begin{prop}\label{prop.ZpZ.mult}
Fix an integer $r \geq 2$, then the number $h=h(G;\tau,\tau)$ of
Hurwitz components for $G=(\ZZ/p\ZZ)^r$ and $\tau=(p,\dots,p)$ ($p$
appears $(r+1)-$times) satisfies, as $p \rightarrow \infty$,
\[
    h=\Theta(p^{r^2}).
\]
\end{prop}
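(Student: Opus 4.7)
The plan is to generalize the argument of Lemma~\ref{lem.ZnZ.p} from $r=2$ to arbitrary $r \geq 2$, exploiting the linear algebra structure of $G = \FF_p^r$ and the fact that $\Aut(G) = \GL(r,p)$ already has order $\Theta(p^{r^2})$, which matches the target growth exactly.

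First, I would parametrize the set $\mathcal{U}(G;\tau,\tau)$. A spherical $(r+1)$-system $T = (x_1,\dots,x_{r+1})$ with $\sum x_i = 0$ generates $G$ if and only if $x_1,\dots,x_r$ form an $\FF_p$-basis (since any $r$-subset of a generating $(r+1)$-tuple with zero sum must span). Hence the number of ordered spherical $(r+1)$-systems equals $|\GL(r,p)| = \Theta(p^{r^2})$. Counting pairs $(T_1,T_2)$ gives $|\mathcal{U}(G;\tau,\tau)| \leq \Theta(p^{2r^2})$, and removing the "bad" pairs where $\Sigma(T_1) \cap \Sigma(T_2) \neq \{0\}$ only discards a subset of codimension at least one, so $|\mathcal{U}(G;\tau,\tau)| = \Theta(p^{2r^2})$.

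Next I would analyze the $\Aut(G)$-action. Since $\GL(r,p)$ acts simply transitively on ordered bases, I can normalize $T_1 = (e_1,\dots,e_r,-e_1-\cdots-e_r)$ as a canonical representative, with trivial stabilizer. The set of $\Aut(G)$-orbits on $\mathcal{U}(G;\tau,\tau)$ is then in bijection with the set of valid $T_2$'s, of cardinality $\Theta(p^{r^2})$. This immediately gives the upper bound $h(G;\tau,\tau) \leq \Theta(p^{r^2})$, since further quotienting by braids and by the $\ZZ/2\ZZ$-swap can only merge orbits.

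For the lower bound, the key observation is that since $G$ is abelian, the braid generator $\sigma_i$ acts by $(x_1,\dots,x_i,x_{i+1},\dots) \mapsto (\dots, x_i x_{i+1} x_i^{-1}, x_i, \dots) = (\dots, x_{i+1}, x_i, \dots)$, so the braid action factors through the symmetric group $\mathfrak{S}_{r+1}$. Consequently the full acting group in Definition~\ref{def.Hur.comp} reduces to $(\mathfrak{S}_{r+1} \wr \ZZ/2\ZZ) \times \GL(r,p)$, whose wreath-product factor has constant order $2((r+1)!)^2$ independent of $p$. A standard orbit-counting bound then gives
\[
h(G;\tau,\tau) \geq \frac{|\mathcal{U}(G;\tau,\tau)|}{2((r+1)!)^2 \cdot |\GL(r,p)|} = \frac{\Theta(p^{2r^2})}{\Theta(p^{r^2})} = \Theta(p^{r^2}).
\]
Combining the two estimates yields $h(G;\tau,\tau) = \Theta(p^{r^2})$.

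The main technical point is the precise count $|\mathcal{U}(G;\tau,\tau)| = \Theta(p^{2r^2})$, i.e., verifying that the disjointness condition $\Sigma(T_1) \cap \Sigma(T_2) = \{0\}$ is generic. This requires showing that for a fixed $T_1$, the locus of "bad" $T_2$'s (those meeting some $\FF_p$-line of $\Sigma(T_1)$) is contained in a union of a bounded number of proper subvarieties of the $\Theta(p^{r^2})$-dimensional parameter space of $T_2$'s, each contributing only $O(p^{r^2-1})$ points. This is the same phenomenon that underlies Lemma~\ref{lem.sol.p}, generalized from $r=2$ to arbitrary $r$, and amounts to a routine inclusion-exclusion over the finitely many pairs of indices $(i,j)$ for which $\langle x_i \rangle$ and $\langle y_j \rangle$ could coincide.
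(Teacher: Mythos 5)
Your proposal is correct and follows essentially the same strategy as the paper: normalize $T_1$ to a standard spherical system using the simply transitive $\GL(r,p)$-action, identify the $\Aut(G)$-orbits with the admissible choices of $T_2$, and observe that for abelian $G$ the braid action factors through $\mathfrak{S}_{r+1}$ and so contributes only bounded orbit sizes. The one (minor) difference is in counting the admissible $T_2$'s: you argue that the disjointness condition is generic via a union bound over the $(r+1)^2$ loci $\langle y_j\rangle=\langle x_i\rangle$, each of size $O(p^{r^2-1})$, whereas the paper exhibits an explicit family of $\Theta(p^{r^2})$ valid tuples; both arguments are sound.
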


\begin{proof}
Let $(x_1,\dots,x_{r+1};y_1,\dots,y_{r+1})$ be an unmixed
ramification structure for $G$. Since $x_1,\dots,x_{r+1}$ generate
$G$, they are a basis, and without loss of generality they are of
the form given in Step 5 of Theorem~\ref{thm.unmixed.abelian}.
However, for $y_1,\dots,y_{r+1}$ one can take any appropriate set of
$r+1$ vectors in $(\ZZ/p\ZZ)^r$, which admit an unmixed ramification
structure, and so each proper choice of $(y_1,\dots,y_{r+1})$
corresponds to exactly one $\Aut(G)-$orbit in the set
$\mathcal{U}(G;\tau,\tau)$.

Therefore, one can choose any invertible $(r-2)\times (r-2)$ matrix
for
\[
\begin{pmatrix}
y_{1,1}& \dots & y_{1,r-2}\\
& \vdots & \\
y_{r-2,1}& \dots & y_{r-2,r-2}\\
\end{pmatrix},
\]
choose any vector of length $r-2$ for
$(y_{r-1,1},\dots,y_{r-1,r-2})$, and similarly for
$(y_{r,1},\dots,y_{r,r-2})$. Moreover, for $1 \leq i \leq r-2$, one
can choose for $(y_{i,r-1},y_{i,r})$ any vector from the set
$S:=\{(a,b)\in \mathbb{F}_p^2: a \neq 0, b \neq 0, a \neq b\}$.
Observe that $|S|=(p-1)(p-2)$.

Now, one has to make sure that $y_{r-1}$ is not a linear combination
of $y_1,\dots,y_{r-2}$, by choosing $(y_{r-1,r-1},y_{r-1,r})$
appropriately from $S$, and so there are at least
$(p-1)(p-2)-1=p^2-3p+1$ possibilities for this pair. Moreover, one
should choose $(y_{r,r-1},y_{r,r})$ appropriately from $S$, such
that $y_{r}$ is not some linear combination of $y_1,\dots,y_{r-1}$,
and that $(y_{r+1,r-1},y_{r+1,r}) \in S$, and so the number of
possibilities to the pair $(y_{r,r-1},y_{r,r})$ is at least
$(p-3)(p-5)=p^2-8p+15$.

The condition that the pairs $(y_{i,r-1},y_{i,r}) \in S$ for $1 \leq
i \leq r+1$ is needed to guarantee that for any $k,l \in \mathbb{Z}$
and $1\leq i,j\leq r+1$, if the vectors $kx_i$ and $ly_j$ are not
trivial, then they are linearity independent, and so $\Sigma_1 \cap
\Sigma_2 = \{0\}$, as needed.

Hence, the number of $\Aut(G)-$orbits in the set
$\mathcal{U}(G;\tau,\tau)$ is bounded from below by
\begin{align*}
    &|\GL((r-2),p)|p^{2(r-2)}\bigl((p-1)(p-2)\bigr)^{r-2}(p^2-3p+1)(p^2-8p+15) \\
    &= \Theta\bigl(p^{(r-2)^2+2(r-2)+2(r-2)+2+2}\bigr) =
    \Theta\bigl(p^{r^2}\bigr).
\end{align*}
It is clear that the number of orbits is bounded from above by
\[
    |\GL(r,p)|=\Theta(p^{r^2}).
\]

Now, the action of $B_{r+1} \times B_{r+1}$ on the $\Aut(G)-$orbits
of $\mathcal{U}(G;\tau,\tau)$, is equivalent to the action of
$\mathfrak{S}_{r+1} \times \mathfrak{S}_{r+1}$, since $G$ is abelian, and so yields orbits
of sizes between $(r+1)!$ and $((r+1)!)^2$. This has no effect on
the above asymptotic, however, since $r$ is fixed.
\end{proof}

\subsubsection{Extensions of abelian groups, dihedral groups and small groups}\label{sect.extension}
\begin{proof}[Group theoretical proof to Lemma~\ref{lem.genus.ge.2}]

If $r_1=r_2=3$, then the result follows from~\cite[Proposition
3.2]{BCG05}. Note that when $r_i=3$ (for $i=1$ or $2$) the
above inequality is equivalent to the condition that $\mu_i<1$.

If $\mu>1$ then the possible unordered types are
\[
    (2,2,n) (n \in \mathbb{N}),\quad (2,3,3),\quad (2,3,4),\quad (2,3,5).
\]

In the first case, $G\cong D_n$ is a dihedral group of order $2n$,
and thus cannot admit an unmixed Beauville structure by~\cite[Lemma
3.7]{BCG05}. Moreover, $G$ cannot admit an unmixed ramification
structure $(T_1,T_2)$, where $T_1$ has an unordered type $(2,2,n)$.
Indeed, let $C_n$ denote a maximal cyclic subgroup of $D_n$, then
$D_n \setminus C_n$ contains at most two conjugacy classes, more
precisely, it contains one if $n$ is odd and two if $n$ is even. If
$n$ is odd, since both $T_1$ and $T_2$ contain elements of $D_n
\setminus C_n$, then $\Sigma_1 \cap \Sigma_2 \neq \{1\}$. If $n$ is
even, then $T_1$ necessarily contains two elements from two
different conjugacy classes of $D_n \setminus C_n$, and $T_2$ always
contains an element of $D_n \setminus C_n$, which again contradicts
$\Sigma_1 \cap \Sigma_2 = \{1\}$.

In the other cases, one obtains the following isomorphisms of
triangular groups
\[
    \Delta(2,3,3)\cong \mathfrak{A}_4,\quad \Delta(2,3,4)\cong \mathfrak{S}_4,\quad \Delta(2,3,5)\cong \mathfrak{A}_5,
\]
and it is easy to check that these groups do not admit an unmixed
Beauville structure (see also~\cite[Proposition 3.6]{BCG05}).
Moreover, these groups cannot admit an unmixed ramification
structure $(T_1,T_2)$, where $T_1$ has an unordered type $(2,3,n)$
and $n=3,4,5$. Indeed, in the groups $\mathfrak{A}_4$ and $\mathfrak{A}_5$, any two
elements of the same order are either conjugate or one can be
conjugated to some power of the other, in contradiction to $\Sigma_1
\cap \Sigma_2 = \{1\}$. For the group $\mathfrak{S}_4$, $T_1$ necessarily
contains one $2$-cycle, one $3$-cycle and one $4$-cycle, so the
condition $\Sigma_1 \cap \Sigma_2 = \{1\}$ implies that $T_2$ can
contain only elements which have exactly two $2$-cycles, and these
elements cannot generate $\mathfrak{S}_4$, yielding a contradiction.

If $\mu=1$ then the possible unordered types are
\[
    (3,3,3),\quad (2,4,4), \quad (2,3,6),
\]
and so $G$ is a finite quotient of one of the wall-paper groups and
cannot admit an unmixed Beauville structure by~\cite[\S6]{BCG05}.
Moreover, the arguments in~\cite[\S6]{BCG05} show that, in fact,
these groups cannot admit an unmixed ramification structure
$(T_1,T_2)$, where $T_1$ has an unordered type either $(3,3,3)$ or
$(2,4,4)$ or $(2,3,6)$. For example, if $G$ is a quotient of the
triangle group $\Delta(3,3,3)$, and we denote by $A$ the maximal
normal abelian subgroup of $G$, then by~\cite[Proposition
6.3]{BCG05}, for any $g \in G \setminus A$ there exists some integer
$i$ s.t. $g^i$ belongs to one of two fixed conjugacy classes $C_1$
and $C_2$. Moreover, $T_1$ necessarily contains two elements
$g_1,g_2 \in G \setminus A$ such that $g_1^{i_1} \in C_1$ and
$g_2^{i_2} \in C_2$ for some $i_1,i_2$. Since $T_2$ always contains
an element of $G \setminus A$, this contradicts $\Sigma_1 \cap
\Sigma_2 = \{1\}$.

For $r_1,r_2 \geq 4$ the above inequality holds, unless the type is
$(2,2,2,2)$. In the latter case, $G$ is a finite quotient of the
wall-paper group
\begin{align*}
    \Ga &\cong \langle t_1,t_2,t_3,t_4:
    t_1^2, t_2^2, t_3^2, t_4^2, t_1t_2t_3t_4 \rangle
    \cong \langle t_1,t_2,t_3:
    t_1^2, t_2^2, t_3^2, (t_1t_2t_3)^2 \rangle \\
    &\cong \langle t,r,s: [r,s],t^2,trtr,tsts \rangle,
\end{align*}
by setting $t_1=rt$, $t_2=ts$ and $t_3=t$.

Hence, $\Ga \cong \mathbb{Z}/2\mathbb{Z} \ltimes \mathbb{Z}^2$, and
so all its finite quotients are of the form $G =
\mathbb{Z}/2\mathbb{Z} \ltimes (\mathbb{Z}/m\mathbb{Z} \times
\mathbb{Z}/n\mathbb{Z})$ for some $m,n \in \mathbb{N}$. We will show
in Proposition~\ref{prop.semi.abelian} that these groups cannot
admit an unmixed ramification structure of size $(4,4)$. In fact,
the same argument also shows that these groups cannot admit an
unmixed ramification structure $(T_1,T_2)$, where $T_1$ has an
unordered type $(2,2,2,2)$ (see Remark~\ref{rem.semi.abelian}).
\end{proof}

The above proof uses the following proposition, which generalizes the result in~\cite[Lemma
3.7]{BCG05}, that dihedral groups do not admit unmixed Beauville
structures.

\begin{prop}\label{prop.semi.abelian}
For any $n,m \in \mathbb{N}$, the finite group
\[
G= \mathbb{Z}/2\mathbb{Z} \ltimes (\mathbb{Z}/m\mathbb{Z} \times
\mathbb{Z}/n\mathbb{Z})
\]
does not admit an unmixed ramification structure of size $(4,4)$.
\end{prop}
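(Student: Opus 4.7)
The plan is to exploit the restrictive structure of $G$ as an index-2 extension of the abelian group $A := \ZZ/m\ZZ \times \ZZ/n\ZZ$. Writing $t$ for the generator of the $\ZZ/2\ZZ$-factor, which acts on $A$ by inversion, every element of the coset $tA$ satisfies $(ta)^2 = t(at)a = t(ta^{-1})a = 1$, so $tA$ consists entirely of involutions. A direct computation shows that conjugation by $b \in A$ sends $ta$ to $t(ab^2)$ and conjugation by $t$ sends $ta$ to $ta^{-1}$; since $A/2A$ is $2$-torsion, the conjugacy classes in $tA$ are indexed by $A/2A$, which has $1$, $2$ or $4$ elements according to the parities of $m$ and $n$.

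For any spherical generating $4$-tuple $T = (x_1,x_2,x_3,x_4)$ of $G$ with $\prod x_i = 1$, projecting to $G/A \cong \ZZ/2\ZZ$ forces the number of $x_i$ lying in $tA$ to be even, and positivity is needed for generation, so this count is $2$ or $4$. Using that powers of $tA$-elements lie in $\{1\} \cup tA$, that $A$ is abelian, and that $t$ inverts $A$, one obtains
\[
\Sigma(T) \cap A = \bigcup_{x_i \in A}\langle x_i\rangle, \qquad \Sigma(T) \cap tA = \bigcup_{x_i \in tA}(\text{conj.\ class of } x_i),
\]
and each conjugacy class in $tA$ is identified with an element of $A/2A$.

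The proof then proceeds by case analysis on $(|T_1 \cap tA|,|T_2 \cap tA|) \in \{(2,2),(2,4),(4,2),(4,4)\}$, with a preliminary reduction: if both $m$ and $n$ are odd, then $|A/2A| = 1$, and since both $T_1$ and $T_2$ must meet the unique conjugacy class of $tA$, the condition $\Sigma(T_1)\cap \Sigma(T_2)=\{1\}$ fails at once. In the case $(4,4)$, writing $x_j = ta_j$ the product relation yields $\bar a_1 + \bar a_3 = \bar a_2 + \bar a_4$ in $A/2A$, while the generation condition forces $\{\bar a_2 - \bar a_1, \bar a_3 - \bar a_1, \bar a_4 - \bar a_1\}$ to span $A/2A$; combining these shows that $\{\bar a_1, \bar a_2, \bar a_3, \bar a_4\}$ already exhausts $A/2A$, leaving no class available to $T_2$. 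In the case $(2,2)$, the relation $x_1 x_2 = (x_3 x_4)^{-1} \in A$ yields $\langle T_i\rangle \cap A = \langle x_1, x_2\rangle$, so $x_1,x_2$ must generate $A$; a prime-by-prime analysis of the Sylow subgroups of $A$ then forces a common element in $\langle x_i\rangle \cap \langle y_j\rangle$ (when some Sylow subgroup $A_p$ has $p$-rank one, any generating pair of $A$ covers $A_p$ entirely, forcing overlap). The mixed case $(2,4)$ combines the constraints of both.

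The main obstacle is the $(2,2)$ subcase when both $m$ and $n$ are even: here every Sylow subgroup of $A$ has rank at most $2$, and disjointness of the $tA$-classes used by $T_1$ and $T_2$ does not immediately conflict with the generation hypothesis, since $|A/2A| = 4$ admits a partition into two pairs. One must exploit the identity $\bar x_1 + \bar x_2 = \bar a_3 + \bar a_4$ in $A/2A$ together with the disjointness of the $tA$-classes of $T_1$ and $T_2$, which together force $\{\bar x_1, \bar x_2\} = \{\bar y_1, \bar y_2\}$ as subsets of $A/2A$; a careful analysis of the Sylow-$2$ subgroup $A_2$ of $A$ then produces a nontrivial common $2$-torsion element in $\langle x_i\rangle \cap \langle y_j\rangle$, delivering the required contradiction.
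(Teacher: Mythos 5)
Your proposal is correct and follows essentially the same route as the paper's proof: both index the conjugacy classes of the coset $tA$ by $A/2A$, use the product and generation conditions to constrain the mod-$2$ images of a spherical generating $4$-tuple, and extract a common element of $\Sigma(T_1)\cap\Sigma(T_2)$ either as a shared $tA$-class or as one of the $2$-torsion elements $r^{m/2}$, $s^{n/2}$, $r^{m/2}s^{n/2}$ arising as powers of the $A$-entries. One word of caution on the step you leave as an assertion: equality of images in $A/2A$ alone does \emph{not} determine the involution of the cyclic subgroup when the two invariant factors of the Sylow-$2$ subgroup differ (e.g.\ $(0,1)$ and $(2,1)$ in $\mathbb{Z}/4\times\mathbb{Z}/2$ yield different involutions), so you must use that $\{\bar x_1,\bar x_2\}=\{\bar y_1,\bar y_2\}$ is a basis and select the member with odd $r$-exponent, whose cyclic subgroup always contains $r^{m/2}$ --- which is exactly the computation the paper performs with the exponent $2^{\mu-1}m'n'$.
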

\begin{proof}
Observe that $G$ can be presented by
\[
    G = \langle t,r,s: t^2,r^m,s^n,[r,s],tsts,trtr \rangle,
\]
and so any element in $G$ can be written uniquely as
$t^{\epsilon}r^is^j$ for $\epsilon\in\{0,1\}$, $1\leq i \leq m$,
$1\leq j \leq n$.

Conjugation of some element $tr^is^j$ by $r^{-1}$ yields
$r^{-1}tr^is^jr = ttr^{-1}tr^is^jr = trr^irs^j = tr^{i+2}s^j$, and
similarly conjugation by $s^{-1}$ yields $tr^{i}s^{j+2}$. Hence,
$tr^is^j$ can be conjugated to $tr^{i+2k}s^{j+2l}$ for any $k,l$.

Let $A \cong \mathbb{Z}/m\mathbb{Z} \times \mathbb{Z}/n\mathbb{Z}$
be the maximal normal abelian subgroup in $G$, then $G \setminus A$
contains at most four conjugacy classes, represented by
$t,tr,ts,trs$. In fact, it contains one conjugacy class if both
$m,n$ are odd, two conjugacy classes if one of $m,n$ is odd and the
other is even, and four if both $m,n$ are even.

Since any spherical $4-$system of generators of $G$ must contain an
element of $G \setminus A$, the condition that $\Sigma_1 \cap
\Sigma_2 = \{1\}$ immediately implies that $m,n$ cannot both be odd.

Assume now that $m$ is even and $n$ is odd, and consider the
following map
\[
    G \twoheadrightarrow (\mathbb{Z}/2\mathbb{Z})^2, \text{ defined
    by } (t^{\epsilon}r^is^j)\mapsto \bigl(\epsilon, i (\text{mod } 2)\bigr).
\]
Note that for any $j$ and any odd $i$, one has that
\[
    (r^is^j)^{nm/2} =  (r^{m/2})^{ni}(s^n)^{mj/2} = r^{m/2} =:u \neq
    1.
\]

If $T=(t^{\epsilon_1}r^{i_1}s^{j_1}, t^{\epsilon_2}r^{i_2}s^{j_2},
t^{\epsilon_3}r^{i_3}s^{j_3}, t^{\epsilon_4}r^{i_4}s^{j_4})$ is a
spherical $4-$system of generators of $G$, then
$\epsilon_1+\epsilon_2+\epsilon_3+\epsilon_4 \equiv 0 \pmod 2$,
$i_1+i_2+i_3+i_4\equiv 0 \pmod 2$, and the images of the elements in
$T$ generate $(\mathbb{Z}/2\mathbb{Z})^2$. Hence, the image in
$(\mathbb{Z}/2\mathbb{Z})^2$ of such a spherical $4-$system of
generators can be (up to a permutation) only one of
\begin{enumerate}\renewcommand{\theenumi}{\it \roman{enumi}}
\item $\{(1,1),(1,0),(0,1),(0,0)\}$,
\item $\{(1,1),(1,1),(1,0),(1,0)\}$,
\item $\{(1,1),(1,1),(0,1),(0,1)\}$,
\item $\{(1,0),(1,0),(0,1),(0,1)\}$.
\end{enumerate}

Therefore, for any two spherical $4$-systems $T_1$ and $T_2$ one can
find $x \in T_1$ and $y \in T_2$ such that either
\begin{itemize}
\item $x,y \in G\setminus A$ are conjugate; or
\item $x,y \in A$ and $x^{mn/2}=u=y^{mn/2}$;
\end{itemize}
a contradiction to $\Sigma_1 \cap \Sigma_2 = \{1\}$.

If both $m$ and $n$ are even, write $m=2^{\mu}m'$ and $n=2^{\nu}n'$,
where $m',n'$ are odd. Without loss of generality, we may assume
that $\mu \geq \nu$.

Consider the following map
\[
    G \twoheadrightarrow (\mathbb{Z}/2\mathbb{Z})^3, \text{ defined
    by } (t^{\epsilon}r^is^j)\mapsto \bigl(\epsilon, i (\text{mod } 2), j (\text{mod } 2)\bigr).
\]
If  $\mu>\nu$ and if $i$ is odd then
\[
    (r^is^j)^{2^{\mu-1}m'n'} = (r^{2^{\mu-1}m'})^{in'}(s^{2^{\mu-1}n'})^{jm'}=r^{m/2}:=u \neq 1,
\]
and if $\mu=\nu$ then
\[
    (r^is^j)^{2^{\mu-1}m'n'} = \begin{cases}
        r^{m/2}:=u \neq 1, &\text{ if } i \text{ is odd and } j \text{ is even}; \\
        s^{n/2}:=v \neq 1, &\text{ if } i \text{ is even and } j \text{ is
        odd}; \\
        r^{m/2}s^{n/2}:=w \neq 1, &\text{ if } i,j \text{ are odd}.
    \end{cases}
\]

If $T=(t^{\epsilon_1}r^{i_1}s^{j_1}, t^{\epsilon_2}r^{i_2}s^{j_2},
t^{\epsilon_3}r^{i_3}s^{j_3}, t^{\epsilon_4}r^{i_4}s^{j_4})$ is a
spherical $4-$system of generators of $G$, then
$\epsilon_1+\epsilon_2+\epsilon_3+\epsilon_4\equiv 0 \pmod 2$,
$i_1+i_2+i_3+i_4\equiv 0 \pmod 2$, $j_1+j_2+j_3+j_4\equiv 0 \pmod 2$
and the images of the elements in $T$ generate
$(\mathbb{Z}/2\mathbb{Z})^3$. Hence, the image in
$(\mathbb{Z}/2\mathbb{Z})^3$ of such a spherical $4-$system of
generators can be (up to a permutation) only one of
\begin{enumerate}\renewcommand{\theenumi}{\it \roman{enumi}}
\item $\{(1, 1, 1),(1, 0, 0),(0, 1, 0),(0, 0, 1)\}$,
\item $\{(1, 1, 0),(1, 0, 1),(0, 1, 0),(0, 0, 1)\}$,
\item $\{(1, 1, 0),(1, 0, 0),(0, 1, 1),(0, 0, 1)\}$,
\item $\{(1, 1, 1),(1, 0, 1),(0, 1, 1),(0, 0, 1)\}$,
\item $\{(1, 0, 1),(1, 0, 0),(0, 1, 1),(0, 1, 0)\}$,
\item $\{(1, 1, 1),(1, 1, 0),(0, 1, 1),(0, 1, 0)\}$,
\item $\{(1, 1, 1),(1, 1, 0),(1, 0, 1),(1, 0, 0)\}$.
\end{enumerate}

Therefore, for any two spherical $4$-systems $T_1$ and $T_2$ one can
find $x \in T_1$ and $y \in T_2$ such that either
\begin{itemize}
\item $x,y \in G\setminus A$ are conjugate; or
\item $x,y \in A$ and $x^{2^{\mu-1}m'n'}=u=y^{2^{\mu-1}m'n'}$; or
\item $x,y \in A$ and $x^{2^{\mu-1}m'n'}=v=y^{2^{\mu-1}m'n'}$;
\end{itemize}
a contradiction to $\Sigma_1 \cap \Sigma_2 = \{1\}$.
\end{proof}

\begin{rem}\label{rem.semi.abelian}
In fact, the same argument also shows that the finite group $G=
\mathbb{Z}/2\mathbb{Z} \ltimes (\mathbb{Z}/m\mathbb{Z} \times
\mathbb{Z}/n\mathbb{Z})$ ($m,n \in \mathbb{Z}$) cannot admit an
unmixed ramification structure $(T_1,T_2)$, where $T_1$ has type
$(2,2,2,2)$.

Recall that $G \setminus A$ contains at most four conjugacy classes,
more precisely, it contains one conjugacy class if both $m,n$ are
odd, two conjugacy classes if one of $m,n$ is odd and the other is
even, and four if both $m,n$ are even.

Since any spherical system of generators of $G$ must contain an
element of $G \setminus A$, the condition that $\Sigma_1 \cap
\Sigma_2 = \{1\}$ immediately implies that $m,n$ cannot both be odd.

If $m$ is even and $n$ is odd, then the above argument shows that
the image in $(\mathbb{Z}/2\mathbb{Z})^2$ of any spherical system of
generators contains at least two of $(1,1),(1,0),(0,1)$, and hence
one can find $x \in T_1$ and $y \in T_2$ such that either $x,y \in
G\setminus A$ are conjugate, or $x,y \in A$ and $x^{mn/2}=y^{mn/2}$,
a contradiction to $\Sigma_1 \cap \Sigma_2 = \{1\}$.

If $m$ is even and $n$ is even, then the elements of order two in
$G$ are exactly $tr^is^j$ ($1\leq i \leq m$, $1\leq j \leq n)$,
$u=r^{m/2}$, $v=s^{n/2}$ and $w=r^{m/2}s^{n/2}$. The above argument
shows that $T_1$ either contains four elements from four different
conjugacy classes of $G\setminus A$, or two elements from two
different conjugacy classes of $G\setminus A$ and two different
elements of $\{u,v,w\}$. Since $T_2$ is also a spherical system of
generators, then one can find $x \in T_1$ and $y \in T_2$ such that
either $x,y \in G\setminus A$ are conjugate, or $y^i=x \in
\{u,v,w\}$, a contradiction to $\Sigma_1 \cap \Sigma_2 = \{1\}$.
\end{rem}



\begin{thebibliography} {9}
%
\bibitem[BC] {BC}
I. Bauer, F. Catanese, \textit{Some new surfaces with $p_g=q=0$}.
Proceeding of the Fano Conference. Torino (2002), 123--142.
%
\bibitem[BCG05] {BCG05}
I. Bauer, F. Catanese, F. Grunewald, \textit{Beauville surfaces without real structures}.
In: Geometric methods in algebra and number theory, Progr. Math., vol \textbf{235}, Birkh\"auser Boston, (2005), 1--42.
%
\bibitem[BCG06] {BCG06}
I. Bauer, F. Catanese, F. Grunewald, \textit{Chebycheff and Belyi
polynomials, dessins d'enfants, Beauville surfaces and group
theory}. Mediterr. J. Math. \textbf{3, no.2}, (2006), 121--146.
%
\bibitem[BCG08] {BCG08}
I. Bauer, F. Catanese, F. Grunewald, \textit{The classification of
surfaces with $p_g=q=0$ isogenous to a product}. Pure Appl. Math. Q.,
\textbf{4}, no. 2, part1, (2008), 547--586.
%
\bibitem[B] {Be}
A. Beauville, \textit{Surfaces Alg\'{e}briques Complex}. Ast\'{e}risque \textbf{54}, Paris (1978).
%
\bibitem[Br]{Br}
T. Breuer, \textit{Characters and Automorphism Group of Compact
Riemann Surfaces}. London Math. Soc., Lecture Notes Series
\textbf{280}. Cambridge University Press, (2000).
%

\bibitem[Cat84] {Cat84}
F. Catanese, \textit{On the moduli spaces of surfaces of general
type}. J. Diff. Geo. \textbf{19}, (1984), 483--515.
%
\bibitem[Cat92] {Cat92}
F. Catanese, \textit{Chow varieties, Hilbert schemes and moduli
spaces of surfaces of general type}.  J. Algebraic Geom.
\textbf{1} (1992), no. 4, 561--595.
%
\bibitem[Cat00] {Cat00}
F. Catanese, \textit{Fibred surfaces, varieties isogenous to a
product and related moduli spaces}. Amer. J. Math. \textbf{122},
(2000), 1--44.
%
\bibitem[FMP]{FMP}
B. Fairbairn, K. Magaard, C. Parker, \textit{Generation of finite
simple groups with an application to groups acting on Beauville
surfaces}, Preprint availiable at arXiv:1010.3500.
%
\bibitem[FG] {FG}
Y. Fuertes, G. Gonz\'{a}lez-Diez, \textit{On Beauville structures on
the groups $S_n$ and $A_n$}, Math. Z. {\bf 264} (2010), no. 4, 959--968.
%
\bibitem[FGJ] {FGJ}
Y. Fuertes, G. Gonz\'{a}lez-Diez, A. Jaikin-Zapirain, \textit{On
Beauville surfaces}, Groups Geom. Dyn. {\bf 5} (2011), 107--119.
%
\bibitem[FJ] {FJ}
Y. Fuertes, G. Jones, \textit{Beauville surfaces and finite groups},
J. Algebra {\bf 340} (2011) 13--27.
%
\bibitem[GLL]{GLL}
S. Garion, M. Larsen, A. Lubotzky, \textit{Beauville surfaces and
finite simple groups}, to appear in J. Reine Angew. Math.  
Preprint availiable at arXiv:1005.2316.
%
\bibitem[GP]{GP}
S. Garion, M. Penegini, \textit{New Beauville surfaces and finite simple groups}. 
Preprint availiable at arXiv:0910.5402v3.
%
\bibitem[G] {Gie}
D. Gieseker, \textit{Global moduli for surfaces of general type}.
Invent. Math \textbf{43} no. 3, (1977), 233--282.
%
\bibitem[GJT] {GJT}
G. Gonzales-Diez, G. Jones, D. Torres-Teigell, 
\textit{Beauville surfaces with abelian Beauville group}.
Preprint availiable at arXiv:1102.4552.
%
\bibitem[GM] {GM}
R. Guralnick, G. Malle, \textit{Simple groups admit Beauville
structures}, Preprint availiable at arXiv:1009.6183.
%
\bibitem[LS] {LS04}
M.W. Liebeck, A. Shalev, \textit{Fuchsian groups, coverings of
Riemann surfaces, subgroup growth, random quotients and random
walks}. J. Algebra {\bf 276} (2004) 552--601.
%
%
\bibitem[Ma] {Ma}
A. M. Macbeath, \textit{Generators of the linear fractional groups},
Number Theory (Proc. Sympos. Pure Math., Vol. XII, Houston, Tex.,
1967). Amer. Math. Soc., Providence, R.I. (1969) 14--32.
%
\bibitem[Man] {Man}
M. Manetti, \textit{Iterated double covers and connected
components of moduli spaces}. Topology \textbf{36}, (1997),
745--764.
%
\bibitem[Su]{Su}
M. Suzuki, \textit{Group Theory I}. Springer-Verlag, Berlin, 1982.
%
\bibitem[V] {Vol}
H. V\"olklein, \textit{Groups as Galois groups - an introduction}.
Cambridge Studies in Advanced Mathematics, {\bf 53}.
Cambridge University Press, Cambridge, 1996.

\end{thebibliography}
\end{document}